\title{Explicit Multi-Matrix Topological Expansion for Quaternionic Random Matrices}
\author{C.\ E.\ I.\ Redelmeier\thanks{Research supported by a two-year Sophie Germain post-doctoral scholarship provided by the Fondation math\'{e}matique Jacques Hadarmard, held at the D\'{e}partement de Math\'{e}matiques, UMR 8628 Universit\'{e} Paris-Sud 11-CNRS, B\^{a}timent 425, Facult\'{e} des Sciences d'Orsay, Universit\'{e} Paris-Sud 11, F-91405 Orsay Cedex.}}
\newtheorem{theorem}{Theorem}[section]
\newtheorem{lemma}[theorem]{Lemma}
\newtheorem{proposition}[theorem]{Proposition}
\newtheorem{corollary}[theorem]{Corollary}
\theoremstyle{remark}
\newtheorem{remark}[theorem]{Remark}
\newtheorem{notation}[theorem]{Notation}
\newtheorem{example}[theorem]{Example}
\theoremstyle{definition}
\newtheorem{definition}[theorem]{Definition}
\newtheorem{algorithm}[theorem]{Algorithm}
\begin{document}

\maketitle

\begin{abstract}
We present an explicit formula for the expected value of a product of several independent symplectically invariant matrices in which the trace and real part function may be applied, possibly to different subexpressions.  This takes the form of a topological expansion; however, each term has two topologies: one for the trace, and another for the real part.

The traces and real parts can always be written in terms of index contraction, but in some cases, it is possible to write the expression as a product in which the two functions are applied to bracketed intervals in a legal bracket diagram.  We present the conditions under which this may be done, and an algorithm to construct such an expression given the contracted indices when possible.

The summands in the topological expansion are written in terms of matrix cumulants.  We compute the matrix cumulants of quaternionic Ginibre, Gaussian symplectic, quaternionic Wishart, and Haar-distributed symplectic matrices, which allow direct computation of an expression constructed from several independent ensembles of any of these matrices.
\end{abstract}

\section{Introduction}

Quaternionic random matrices correspond to the \(\beta=4\) case, where \(\beta\) can be thought of as a sort of inverse temperature measuring the noise in the behaviour of the eigenvalues versus their sensitivity to predictable forces, such as their mutual repulsion.  (The real case is \(\beta=1\) and the complex \(\beta=2\).)  In quantum field theory, they correspond to fermionic particles (where the real case corresponds to bosonic particles).

It is often desirable to consider products of quaternion-valued traces, or other expressions in which the real part function (a function which resembles the trace, and which is in fact the normalized trace in the usual representation of quaternions as \(2\times 2\) matrices) is not applied to exactly the same subexpressions as the trace.  We consider a fairly large class of expressions: those which can be expressed by index contraction of the matrix indices, and, independently, the indices of the matrix representation of the quaternions (which corresponds to quaternion multiplication, or to taking the real part of the quaternion if the index contraction is cyclic).  (Our expression leaves at most two indices of each kind uncontracted, but could be generalized to a tensor expression.)  For example, we may calculate the expected value
\[\mathbb{E}\left[\mathrm{tr}\left(X_{3}X_{8}^{\ast}\mathrm{Re}\left(X_{4}\mathrm{Re}\left(X_{2}\right)X_{1}\right)\right)\mathrm{Re}\left(\mathrm{tr}\left(X_{5}X_{7}^{\ast}\mathrm{tr}\left(X_{6}\right)\right)\right)\right]\]
(where the \(X_{k}\) are random matrices whose joint distribution is symplectically invariant).  A real part of a matrix may be interpreted as a real-valued matrix, and the trace of a quaternion-valued matrix may be interpreted as a quaternion multiple of the identity matrix, either of which may be included in a matrix product expression.  The above expression is quaternion-valued.  It may be expressed in the Einstein summation convention:
\[2^{-3}N^{-3}\mathbb{E}\left[X^{\left(1\right)}_{ab;\alpha\beta}X^{\left(2\right)}_{ca;\gamma\gamma}X^{\left(3\right)}_{bd;\delta\epsilon}X^{\left(4\right)}_{ec;\beta\alpha}X^{\left(5\right)}_{fg;\zeta\eta}X^{\left(6\right)}_{hh;\theta\zeta}X^{\left(-7\right)}_{gf;\eta\theta}X^{\left(-8\right)}_{de;\epsilon\iota}\right]\textrm{.}\]
(We have moved the subscript to a bracketed superscript and used a negative sign to represent the conjugate transpose denoted above as \(X_{k}^{\ast}\), as we will continue to do.  We are also using Latin indices for the matrix indices and Greek for the indices of the matrix representation of quaternions.)  We note that there are two uncontracted (quaternion) indices, \(\delta\) and \(\iota\).  We may also calculate the expected value of an expression such as
\[\mathbb{E}\left[X^{\left(1\right)}_{ab;\alpha\beta}X^{\left(2\right)}_{cb;\gamma\delta}X^{\left(3\right)}_{cd;\beta\alpha}X^{\left(4\right)}_{da;\delta\gamma}\right]\textrm{,}\]
which cannot be expressed as a product of matrices where the functions \(\mathrm{Re}\) and \(\mathrm{tr}\) are applied to bracketed subexpressions.  One reason is that the orders of the matrix and quaternion indices of \(X_{2}\) prevent it from appearing as either \(X_{2}\) or \(X_{2}^{\ast}\).  It is also because a sort of crossing appears in the cycles of the trace and the real part.  However, the same techniques may be used to calculate its expected value.

We consider conditions on the two index contractions which permit a form like the first expression.  These conditions may be expressed in several ways, involving planarity conditions or geodesics in Cayley graphs.  We adapt an algorithm from \cite{MR0404045} to construct the expression from the index contractions when it is possible.

The expected value of such an expression resembles the topological expansions which are used to calculate complex and real matrix integrals.  As with the real case, the quaternionic case involves nonorientable ribbon graphs (see, e.g., \cite{MR2005857, MR2480549} in addition to the orientable graphs appearing in the complex case (see, e.g., \cite{MR2036721}, Chapter~3).

We consider expressions involving several independent quaternionic matrices.  This complicates the expression in ways unlike the complex and real cases, since the quaternionic trace is not cyclic (a property which depends on the commutativity of the field), so the related symmetries do not necessarily hold when the trace is applied to an expression which is not itself cyclically invariant.  This property has some surprising consequences in second-order freeness, which will be discussed in future work.

Section~\ref{section: preliminaries} contains the notation and lemmas we will need throughout.  In Section~\ref{section: index contraction} we present and prove the topological expansion formula, which gives an expansion for a fairly general class of quaternionic several-matrix expressions in terms of the matrix cumulants of the ensembles, allowing exact computation for expressions involving matrix models for which these have been computed.  In Section~\ref{section: bracket diagrams} we present a number of equivalent conditions that allow a term given in terms of contracted indices, such as those that appear in the above topological expansion, to be expressed as a bracket diagram in which the functions \(\mathrm{Re}\) and \(\mathrm{tr}\) are applied to bracketed subexpressions.  In Section~\ref{section: Gaussian} we compute monomial integration formulas and matrix cumulants of three matrix ensembles constructed from Gaussian matrices, specifically, quaternionic Ginibre matrices, Gaussian symplectic ensemble matrices, and quaternionic Wishart matrices.  In Section~\ref{section: Haar} we present the monomial integration formula and matrix cumulants of Haar-distributed symplectic matrices.  We use this to demonstrate that any symplectically invariant distribution satisfies the hypotheses of the topological expansion formula given in Section~\ref{section: index contraction}, and give the matrix cumulants in terms of the Weingarten function and the expected value of expressions where the trace and real part are taken together over products of the matrices.  We also consider the case where the matrices are symplectically in general position but not necessarily independent.

\section{Preliminaries}
\label{section: preliminaries}

\begin{notation}
We denote the set \(\left\{1,\ldots,n\right\}\) by \(\left[n\right]\).

For a set \(I\subseteq\mathbb{Z}\), we let \(-I=\left\{-k:k\in I\right\}\) and \(\pm I:=I\cup\left(-I\right)\).

We will often want to add a ``point at infinity'' to a set.  For a set \(I\), we let \(I_{\infty}:=I\cup\left\{\infty\right\}\).
\end{notation}

\begin{definition}
A {\em set partition} of a set \(I\) is a set of subsets \(V_{1},\ldots,V_{k}\subseteq I\) called {\em blocks} such that \(V_{i}\neq\emptyset\) for all \(i\in\left[k\right]\), \(V_{i}\cap V_{j}=\emptyset\) for all \(i\neq j\), and \(\bigcup_{i=1}^{k}V_{k}=I\).  We denote the set of partitions on set \(I\) by \({\cal P}\left(I\right)\) and the set of partitions on \(\left[n\right]\) by \({\cal P}\left(n\right)\).  We denote the number of blocks in a partition \(\pi\) by \(\#\left(\pi\right)\).

We define a poset on \({\cal P}\left(I\right)\) by letting \(\pi\preceq\rho\) if every block in \(\pi\) is a subset of a block in \(\rho\).  This poset is a lattice (see any standard reference on combinatorics, such as \cite{MR1311922}): any \(\pi,\rho\in{\cal P}\left(I\right)\) have a greatest lower bound \(\pi\wedge\rho\preceq\pi,\rho\) (called the {\em meet}) such that \(i,j\in I\) are in the same block of \(\pi\wedge\rho\) if and only if they are in the same block of \(\pi\) and the same block of \(\rho\), and a least upper bound \(\pi\vee\rho\succeq\pi,\rho\) (called the {\em join}) whose blocks are the union of the blocks of \(\pi\) which are connected by (intersect nontrivially with) blocks of \(\rho\).
\end{definition}

\begin{definition}
An {\em integer partition} \(\lambda\) of \(n\in\mathbb{Z}\) is a set of integers \(\lambda_{1},\ldots,\lambda_{k}\) with \(\lambda_{1}\geq\ldots\geq\lambda_{k}>0\) and \(\lambda_{1}+\cdots+\lambda_{k}=n\).

The {\em length} of an integer partition is the number of integers, in this case \(k\).
\end{definition}

It will usually be clear from context which type of partition we are referring to.  We will sometimes call a set partition a partition, but we will always specify an integer partition.

\subsection{Permutations}

\begin{notation}
We denote the set of all permutations on a set \(I\) by \(S\left(I\right)\) and the set of permutations on \(\left[n\right]\) by \(S_{n}\).  We use the convention that permutations act right-to-left.

We will typically use cycle notation, where we write the elements of each orbit of \(\pi\) in cyclic order \(k,\pi\left(k\right),\pi^{2}\left(k\right),\ldots,\pi^{-1}\left(k\right)\) in brackets.  We note that, if \(\pi,\rho\in S\left(I\right)\), then \(\rho\pi\rho^{-1}\) has the same cycle structure as \(\pi\), where each \(k\) is replaced by \(\rho\left(k\right)\).

The orbits of \(\pi\in S\left(I\right)\) form a partition of the set \(I\), which we will denote \(\Pi\left(\pi\right)\) (or, if it is clear from the context, we will use the permutation itself to represent this partition).  As with partitions, we will denote the number of orbits of \(\pi\) by \(\#\left(\pi\right)\).  We can define a distance metric by letting \(d\left(\pi,\rho\right):=n-\#\left(\rho\pi^{-1}\right)\).  We note that these quantities are unchanged by conjugation, and hence by cycling factors.
\end{notation}

The image of a permutation acting on a partition is the partition whose blocks are the images of the blocks of the original partition.

\begin{definition}
If \(\pi\in S\left(I\right)\) and \(J\subseteq I\), we define the {\em permutation induced by \(\pi\) on \(J\)}, denoted \(\left.\pi\right|_{J}\), by letting \(\left.\pi\right|_{J}\left(k\right)\) be the first of \(\pi\left(k\right),\pi^{2}\left(k\right),\ldots\) in \(J\).  We note that in cycle notation, \(\left.\pi\right|_{J}\) is \(\pi\) with the elements not in \(J\) deleted.
\end{definition}

\begin{definition}
We call a permutation {\em even} (resp.\ {\em odd}) if it can be written as the product of an even (resp.\ odd) number of transpositions.  We note that, since multiplication by a transposition changes the number of orbits by one, a permutation \(\pi\in S\left(I\right)\) is even (resp.\ odd) when \(\left|I\right|-\#\left(\pi\right)\) is even (resp.\ odd).  We write
\[\mathrm{sgn}\left(\pi\right):=\left(-1\right)^{\left|I\right|-\#\left(\pi\right)}\textrm{.}\]
\end{definition}
We note that the even permutations in form a subgroup of \(S\left(I\right)\) of index \(2\), and the odd permutations form the other coset.

\begin{definition}
We call a permutation \(\pi\) {\em alternating} if the sign of \(\pi\left(k\right)\) is always opposite that of \(k\).  We denote the set of alternating permutations on \(I\) by \(S_{\mathrm{alt}}\left(\pm\left[n\right]\right)\).
\end{definition}

\subsection{Parings, the Hyperoctahedral Group, and Maps on Unoriented Surfaces}

\begin{definition}
A {\em pairing} is a partition \(\pi\in{\cal P}\left(I\right)\) where every block contains exactly two elements; or equivalently, a permutation \(\pi\in S\left(I\right)\) in which each cycle has exactly two elements.
\end{definition}

The hyperoctahedral group is the group of symmetries of the \(n\)-dimensional equivalent of the octahedron (cross polytope), or equivalently of the \(n\)-hypercube:
\begin{definition}
For \(n\in\mathbb{N}\), we define a subgroup \(B_{n}\leq S_{2n}\) generated by the transpositions \(\left(2k-1,2k\right)\) and the permutations \(\left(2k_{1}-1,2k_{2}-1\right)\left(2k_{1},2k_{2}\right)\), \(k,k_{1},k_{2}\in\left[n\right]\) (i.e.\ the group of permutations which preserve the pairing \(\left\{\left\{1,2\right\},\ldots,\left\{2n-1,2n\right\}\right\}\)).  
\end{definition}
The two elements in the pairs may be thought of as the two ends of the \(n\) axes, which may be reversed or permuted amongst themselves.

We note that the cosets of \(B_{n}\) in \(S_{2n}\) correspond to \({\cal P}_{2}\left(n\right)\): we can find a permutation \(\pi\in S_{2n}\) which maps pairing \(\left\{\left\{1,2\right\},\ldots,\left\{2n-1,2n\right\}\right\}\) to any other pairing, and acting first by any element of \(B_{n}\) does not change the image.

We will use permutations to encode maps (see, e.g., \cite{MR0404045, MR1603700, MR2036721}), which are used in the computation of matrix integrals.  In the quaternionic case, these maps may not be orientable \cite{MR2005857, MR2480549}.  In order to represent these maps, we consider maps on the orientable covering space which are consistent with the covering map (see \cite{MR1867354}, pages 234--235 for the topological construction, and \cite{MR1813436, MR2851244, MR3217665, 2012arXiv1204.6211R} for the construction of the permutations).  The topological constructions are not necessary to our proofs, but motivate many of the operations.  Roughly, we use a cycle of a permutation to enumerate the edge-ends that appear (in counter-clockwise order) around a face, hyperedge (like an edge, but which may have one, two, or more ends, rather than two as an edge would have), or vertex, and a permutation to encode a collection of faces, hyperedges, or vertices.

\begin{notation}
We denote the function \(k\mapsto-k\) by \(\delta\).

We denote the set of permutations \(\pi\in S\left(\pm I\right)\) such that \(\delta\pi\delta=\pi^{-1}\) and such that \(k\) and \(-k\) do not appear in the same cycle by \(\mathrm{PM}\left(I\right)\).  We denote such permutations on \(\pm\left[n\right]\) by \(\mathrm{PM}\left(n\right)\).

The cycles of such a permutation \(\pi\) appear in pairs, where the order and sign of the integers are reversed.  For each pair, we may pick the cycle where the smallest absolute value integer (or infinity, if it appears) in the cycles appears as a positive integer.  We denote the product of these cycles by \(\mathrm{FD}\left(\pi\right)\), which we will consider a permutation on only the elements in those cycles.  We will also use this symbol to represent the set of elements appearing in those cycles.
\end{notation}
If \(\delta\pi\delta=\pi^{-1}\), to see that no \(k\) and \(-k\) appear in the same cycle, it is sufficient to check that \(\pi\left(k\right)\neq-k\) for all \(k\) \cite{MR3217665}.

Intuitively the positive and negative integers can be thought of as being on opposite sides of the same point.  Cycles of permutations represent cycles in counter-clockwise order, so viewed from the opposite side, the order and all signs are reversed.  See \cite{2012arXiv1204.6211R} for diagrams illustrating this intuition.

\begin{definition}
Given a permutation \(\varphi_{+}\in S\left(I\right)\) encoding faces (where \(k\) and \(-k\) are never both in \(I\)), and another \(\alpha\in\mathrm{PM}\left(I\right)\) encoding hyperedges, we can find another encoding the vertices.  Let \(\varphi_{-}=\delta\varphi_{+}\delta\).  Then
\[K\left(\varphi_{+},\alpha\right):=\varphi_{+}^{-1}\alpha^{-1}\varphi_{-}\textrm{.}\]

We define the Euler characteristic of \(\varphi_{+}\) and \(\alpha\) by
\[\chi\left(\varphi_{+},\alpha\right):=\#\left(\varphi_{+}\varphi_{-}^{-1}\right)/2+\#\left(\alpha\right)/2+\#\left(K\left(\varphi_{+},\alpha\right)\right)/2-\left|I\right|\textrm{.}\]
While the permutation \(K\left(\varphi_{+},\alpha\right)\) depends on the domain \(I\) of \(\varphi_{+}\), the cycle structure does not, so we may define functions which only depend on the number of cycles (such as the Euler characteristic) or the cycle structure (such as the Weingarten function, defined below) using \(\varphi^{-1}\alpha^{-1}\) instead of \(K\left(\varphi_{+},\alpha\right)\).  We will consider such functions defined for \(\varphi,\alpha\in\mathrm{PM}\left(I\right)\) even when we do not know the subset \(I\) of \(\pm I\) on which \(\varphi_{+}\) is defined.
\end{definition}

\begin{lemma}
Let \(\pi_{1},\pi_{2}\in{\cal P}_{2}\left(I\right)\). Then
\[\#\left(\pi_{1}\vee\pi_{2}\right)=\#\left(\mathrm{FD}\left(\pi_{2}\delta\pi_{1}\right)\right)=\#\left(\pi_{1}\pi_{2}\right)/2\textrm{.}\]
\label{lemma: pairings}
\end{lemma}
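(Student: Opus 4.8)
The plan is to show that all three quantities count the number of connected components of the graph on \(I\) whose edges are the blocks of \(\pi_1\) together with the blocks of \(\pi_2\), treating the outer equality (a classical fact about products of two fixed-point-free involutions) separately from the middle one, which re-expresses that product through the \(\mathrm{FD}\) of a \(\mathrm{PM}\) permutation. First I would prove \(\#(\pi_1\vee\pi_2)=\#(\pi_1\pi_2)/2\). Because \(\pi_1,\pi_2\) are pairings, each element of \(I\) lies in exactly one block of each, so the multigraph \(G\) carrying one edge per block of \(\pi_1\) and one per block of \(\pi_2\) is \(2\)-regular; its components are exactly the blocks of \(\pi_1\vee\pi_2\), and each is an even cycle alternating between \(\pi_1\)- and \(\pi_2\)-edges. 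Writing such a component cyclically as \(v_0,\dots,v_{2\ell-1}\) with \(\pi_1\) pairing \(v_{2i}\leftrightarrow v_{2i+1}\) and \(\pi_2\) pairing \(v_{2i+1}\leftrightarrow v_{2i+2}\), a short computation shows \(\pi_1\pi_2\) restricts to two \(\ell\)-cycles, one on the even- and one on the odd-indexed vertices. Thus each join block contributes exactly two orbits of \(\pi_1\pi_2\), giving \(\#(\pi_1\pi_2)=2\,\#(\pi_1\vee\pi_2)\).

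Next I would handle \(\#(\mathrm{FD}(\pi_2\delta\pi_1))=\#(\pi_1\pi_2)/2\). The general ingredient is that for any \(\sigma\in\mathrm{PM}\) the relation \(\delta\sigma\delta=\sigma^{-1}\), together with the fact that no \(k\) and \(-k\) share a cycle, forces the cycles of \(\sigma\) to occur in pairs \(\{C,\delta(\overline{C})\}\), a cycle and the sign-reversal of its reverse; since \(\mathrm{FD}(\sigma)\) keeps exactly one cycle from each pair, \(\#(\mathrm{FD}(\sigma))=\#(\sigma)/2\) directly from the definition. I would then verify that \(\sigma:=\pi_2\delta\pi_1\) lies in \(\mathrm{PM}\) — reducing \(\delta\sigma\delta=\sigma^{-1}\), via \(\pi_i^{-1}=\pi_i\) and \(\delta^2=\mathrm{id}\), to the reflection symmetry of the construction — and exhibit a bijection between the \(\delta\)-reversal pairs of cycles of \(\sigma\) and the orbits of \(\pi_1\pi_2\), yielding \(\#(\sigma)=\#(\pi_1\pi_2)\) and hence the claim. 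The conceptual reason the pairings match is that the two orbits of \(\pi_1\pi_2\) inside one component are interchanged by conjugation by \(\pi_1\), which sends \(\pi_1\pi_2\) to its inverse; this is precisely the \(\delta\)-reversal symmetry that \(\mathrm{FD}\) divides out.

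The step I expect to be the main obstacle is the bookkeeping in this middle equality: confirming that \(\pi_2\delta\pi_1\) satisfies the defining relation of \(\mathrm{PM}\), and, more delicately, aligning the \(\delta\)-reversal pairing of its cycles with the ``two orbits per component'' pairing of \(\pi_1\pi_2\) while respecting the right-to-left composition convention and the sign convention of \(\delta\). Once this correspondence is pinned down, all three counts collapse to the number of components and the equalities follow.
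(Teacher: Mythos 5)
Most of your plan is sound, and for the outer equality it is in substance the paper's own argument: the union multigraph is \(2\)-regular, its components (the blocks of \(\pi_{1}\vee\pi_{2}\)) are even alternating cycles, and \(\pi_{1}\pi_{2}\) advances two steps along such a cycle, so each block splits into exactly two orbits. Your verification that \(\sigma:=\pi_{2}\delta\pi_{1}\in\mathrm{PM}\left(I\right)\) also goes through as sketched: \(\delta\pi_{2}\delta\) and \(\pi_{1}\) act nontrivially on disjoint sets and hence commute, which gives \(\delta\sigma\delta=\sigma^{-1}\), and \(\sigma\left(k\right)\neq-k\) because the \(\pi_{i}\) have no fixed points; consequently \(\#\left(\mathrm{FD}\left(\sigma\right)\right)=\#\left(\sigma\right)/2\).

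The genuine gap is the counting claim you yourself flag as the crux, and as stated it is wrong by a factor of two. You propose a bijection between the \(\delta\)-reversal \emph{pairs} of cycles of \(\sigma\) and the \emph{orbits} of \(\pi_{1}\pi_{2}\). No such bijection can exist: there are \(\#\left(\sigma\right)/2=\#\left(\pi_{1}\vee\pi_{2}\right)\) pairs but, by your own first step, \(2\#\left(\pi_{1}\vee\pi_{2}\right)\) orbits; and if it did exist it would force \(\#\left(\mathrm{FD}\left(\sigma\right)\right)=\#\left(\pi_{1}\pi_{2}\right)\), contradicting the lemma being proved. It is also inconsistent with the conclusion you draw from it, \(\#\left(\sigma\right)=\#\left(\pi_{1}\pi_{2}\right)\), which instead requires a bijection between \emph{individual} cycles of \(\sigma\) and orbits of \(\pi_{1}\pi_{2}\). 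That is the bijection to construct, and it exists: since \(\sigma\) alternates sign, the cycle through \(k>0\) reads \(k,\,-\pi_{1}\left(k\right),\,\pi_{2}\pi_{1}\left(k\right),\,-\pi_{1}\pi_{2}\pi_{1}\left(k\right),\ldots\), so its positive entries are exactly the orbit of \(k\) under \(\pi_{2}\pi_{1}=\left(\pi_{1}\pi_{2}\right)^{-1}\), i.e.\ an orbit of \(\pi_{1}\pi_{2}\); the \(\delta\)-reversed partner cycle has positive entries \(\pi_{1}\) applied to that orbit, which is the \emph{other} orbit of \(\pi_{1}\pi_{2}\) in the same block. Thus the map sending a cycle of \(\sigma\) to the set of its positive entries is a bijection onto the orbits of \(\pi_{1}\pi_{2}\), under which \(\delta\)-reversal of cycles corresponds precisely to your \(\pi_{1}\)-conjugation pairing of orbits. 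Your closing conceptual remark describes exactly this correspondence, so the repair is local --- state it at the level of cycles-to-orbits (equivalently, pairs-to-blocks), not pairs-to-orbits --- after which the argument closes and agrees in substance with the paper's proof, which identifies the absolute values along each cycle of \(\pi_{2}\delta\pi_{1}\) with a block of \(\pi_{1}\vee\pi_{2}\) and observes that \(\pi_{1}\pi_{2}\) visits every other element of that block.
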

\begin{proof}
We may list the elements of a block of \(\pi_{1}\vee\pi_{2}\) by beginning with an element \(k\), and alternatingly applying \(\pi_{1}\) and \(\pi_{2}\).  (Since both are self-inverse, applying either twice gives an element that has already been listed.)

The permutation \(\pi_{2}\delta\pi_{1}\) is a premap: \(\pi_{2}\delta\pi_{1}\left(-k\right)=\pi_{2}\delta\pi_{1}\delta\left(k\right)=\delta\pi_{1}\delta\pi_{2}\left(k\right)=-\left(\pi_{2}\delta\pi_{1}\right)\left(k\right)\) (\(\pi_{2}\) and \(\delta\pi_{1}\delta\) commute, since the former acts nontrivially only on the positive integers and the latter only on the negative integers); and it cannot take \(k\) to \(-k\) since exactly one of \(\pi_{1}\) and \(\pi_{2}\) acts nontrivially (depending on the sign of \(k\), since neither has fixed points on the positive integers).  Since it is alternating, which of \(\pi_{1}\) and \(\pi_{2}\) acts nontrivially also alternates, so the absolute values of the elements of a cycle are the elements of a block \(\pi_{1}\vee\pi_{2}\).  The paired cycle also has this property, so two cycles of \(\pi_{2}\delta\pi_{1}\) correspond to a block of \(\pi_{1}\vee\pi_{2}\).

Applying the permutation \(\pi_{1}\pi_{2}\) gives every other element of a block as described above.  Since the blocks of \(\pi_{1}\vee\pi_{2}\) have an even number of elements (since they are a disjoint union of pairs), this will only give half of the elements, so each block of \(\pi_{1}\vee\pi_{2}\) is the disjoint union of two orbits of \(\pi_{1}\pi_{2}\).
\end{proof}

\begin{lemma}
The sets \({\cal P}_{2}\left(\pm\left[n\right]\right)\) and \(\mathrm{PM}\left(n\right)\) are in bijection under the map \(\pi\mapsto\delta\pi\).  Each pair in \(\pi\) contains exactly one element of \(\mathrm{FD}\left(\delta\pi\right)\).
\label{lemma: pairing bijection}
\end{lemma}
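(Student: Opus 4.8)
The plan is to exploit the fact that the map in question is the single self-inverse operation ``multiply on the left by \(\delta\),'' and to show it carries each of the two sets into the other; the second assertion will then follow by tracking membership in \(\mathrm{FD}(\delta\pi)\) through the cycle structure.

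First I would verify well-definedness in both directions. If \(\pi\in{\cal P}_2(\pm[n])\), then \(\pi\) is a fixed-point-free involution, so using \(\delta^2=\mathrm{id}\) and \(\pi^{-1}=\pi\) I get \(\delta(\delta\pi)\delta=\pi\delta=(\delta\pi)^{-1}\), which is the defining relation of \(\mathrm{PM}\); moreover \((\delta\pi)(k)=-\pi(k)\neq-k\) since \(\pi\) has no fixed points, so by the criterion recorded just after the definition of \(\mathrm{PM}\), no \(k\) and \(-k\) share a cycle of \(\delta\pi\). Hence \(\delta\pi\in\mathrm{PM}(n)\). Conversely, for \(\sigma\in\mathrm{PM}(n)\) the relation \(\delta\sigma\delta=\sigma^{-1}\) gives \((\delta\sigma)^2=\delta\sigma\delta\sigma=\sigma^{-1}\sigma=\mathrm{id}\), so \(\delta\sigma\) is an involution, and \((\delta\sigma)(k)=k\) would force \(\sigma(k)=-k\), which is forbidden in \(\mathrm{PM}(n)\); thus \(\delta\sigma\) is a fixed-point-free involution, i.e.\ an element of \({\cal P}_2(\pm[n])\). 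Since \(\delta(\delta\pi)=\pi\), the operation \(\pi\mapsto\delta\pi\) is its own inverse, and as it sends each of the two sets into the other it is a bijection between them.

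For the second assertion I would set \(\sigma:=\delta\pi\), so that \(\pi=\delta\sigma\), and record two facts about \(\mathrm{FD}(\sigma)\). First, the cycles of \(\sigma\) come in \(\delta\)-conjugate pairs \(\{C,\delta C\delta\}\); these pairs are genuinely distinct because \(k\) and \(-k\) never share a cycle, and if \(k\in C\) then \(-k\in\delta C\delta\). Since \(\mathrm{FD}(\sigma)\) selects exactly one cycle from each such pair, exactly one of \(k\) and \(-k\) lies in \(\mathrm{FD}(\sigma)\). Second, \(\mathrm{FD}(\sigma)\) is by construction a union of whole cycles of \(\sigma\), so \(k\in\mathrm{FD}(\sigma)\) if and only if \(\sigma(k)\in\mathrm{FD}(\sigma)\). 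The block of \(\pi\) containing \(k\) is \(\{k,\pi(k)\}\) with \(\pi(k)=\delta\sigma(k)=-\sigma(k)\); applying the two facts in turn, \(\pi(k)\in\mathrm{FD}(\sigma)\) if and only if \(\sigma(k)\notin\mathrm{FD}(\sigma)\), which holds if and only if \(k\notin\mathrm{FD}(\sigma)\). Thus exactly one of \(k\) and \(\pi(k)\) belongs to \(\mathrm{FD}(\delta\pi)\), as claimed.

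I expect the only delicate point to be the first of the two facts above — checking that the \(\delta\)-paired cycles are distinct and that \(k\) and \(-k\) sit in paired cycles — since this is where the \(\mathrm{PM}\) hypothesis (no \(k,-k\) in a common cycle) is genuinely used. Everything else reduces to short manipulations of the identities \(\delta^2=\mathrm{id}\), \(\pi^2=\mathrm{id}\), and \(\delta\sigma\delta=\sigma^{-1}\).
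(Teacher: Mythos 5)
Your proof is correct and follows essentially the same route as the paper's: verifying the relation \(\delta(\delta\pi)\delta=(\delta\pi)^{-1}\) and the fixed-point/no-\(k,-k\) conditions in both directions, observing the map is self-inverse, and then deducing the second assertion by noting that \(\mathrm{FD}(\delta\pi)\) is a union of whole cycles containing exactly one of each \(\pm k\). The only difference is that you spell out in two explicit facts what the paper compresses into its final one-line sentence.
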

\begin{proof}
Let \(\pi\in{\cal P}_{2}\left(\pm\left[n\right]\right)\).  Then \(\left(\delta\pi\right)^{-1}=\delta\delta\pi\delta\).  Furthermore, \(\pi\left(k\right)\neq k\), so \(\delta\pi\left(k\right)\neq -k\).  Conversely, if \(\rho\in\mathrm{PM}\left(n\right)\), then \(\left(\delta\rho\right)^{2}=\rho^{-1}\rho\) and it has no fixed points since \(\rho\left(k\right)\neq\delta\left(k\right)\).

If \(k\in\mathrm{FD}\left(\delta\pi\right)\), then so is \(\delta\pi\left(k\right)\), so \(\pi\left(k\right)\) is not.
\end{proof}
In particular, this gives us a way to index the pairs in a pairing: we index over elements in \(\mathrm{FD}\left(\delta\pi\right)\).  Lemma~\ref{lemma: parity} below may be used to index the pairs of two pairings in \({\cal P}\left(n\right)\) simultaneously, by negative elements of \(\mathrm{FD}\left(\pi_{2}\delta\pi_{1}\right)\).

The following technical lemma will be useful in several constructions depending on both the coset and the sign of a permutation:

\begin{lemma}
Let \(\pi_{1},\pi_{2}\in{\cal P}_{2}\left(n\right)\).  Then we can find \(\sigma_{1},\sigma_{2}\in S_{n}\) such that \(\sigma_{i}\left(\left\{\left\{1,2\right\},\ldots\left\{n-1,n\right\}\right\}\right)=\pi_{i}\) and \(\sigma_{i}\left(\left\{1,3,\ldots,n-1\right\}\right)=\left\{k>0:-k\in\mathrm{FD}\left(\pi_{2}\delta\pi_{1}\right)\right\}\), \(i=1,2\).

Furthermore, \(\mathrm{sgn}\left(\sigma_{2}\sigma_{1}^{-1}\right)=\left(-1\right)^{n/2-\#\left(\pi_{1}\vee\pi_{2}\right)}\).
\label{lemma: parity}
\end{lemma}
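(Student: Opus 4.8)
The plan is to first pin down the target set \(T:=\{k>0:-k\in\mathrm{FD}(\pi_2\delta\pi_1)\}\) combinatorially, then build \(\sigma_1,\sigma_2\) so that the only remaining freedom is a block permutation, and finally read off the sign from a very explicit form of \(\sigma_2\sigma_1^{-1}\). Throughout, \(n\) is even and I extend \(\pi_1,\pi_2\) to \(\pm[n]\) by declaring them to fix the negative integers, so that \(\pi_2\delta\pi_1\) makes sense and is a premap as in Lemma~\ref{lemma: pairings}.

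First I would analyze the cycles of \(\pi_2\delta\pi_1\). Tracing shows it sends \(k\mapsto-\pi_1(k)\) for \(k>0\) and \(-m\mapsto\pi_2(m)\) for \(m>0\), so each cycle alternates in sign and its positive part is a single orbit of \(\pi_2\pi_1\). By Lemma~\ref{lemma: pairings} each block \(B\) of \(\pi_1\vee\pi_2\) is a disjoint union \(O\sqcup O'\) of exactly two orbits of \(\pi_1\pi_2=(\pi_2\pi_1)^{-1}\). Conjugating \(\pi_1\pi_2\) by \(\pi_1\) (or by \(\pi_2\)) gives its inverse, which has the same orbits; and since \(\pi_1\pi_2\) fixes \(O\) setwise, \(\pi_1\) fixes \(O\) iff \(\pi_2\) does, which would force \(O=B\). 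Hence each of \(\pi_1,\pi_2\) interchanges \(O\) and \(O'\), so every pair of \(\pi_1\) and every pair of \(\pi_2\) has exactly one element in \(O\) and one in \(O'\). The two cycles of \(\pi_2\delta\pi_1\) lying over \(\pm B\) are then (positive part \(O\), negative part \(-O'\)) and its \(\delta\)-image; the \(\mathrm{FD}\) rule selects the one in which \(\min B\) appears positively, i.e.\ whose positive part is the orbit containing \(\min B\). Reading off its negative elements gives \(T=\bigcup_B O'_B\), the union over blocks of the orbit \(O'_B\) not containing \(\min B\). Thus \(T\) and \(T^{c}\) are each a union of exactly one \(\pi_2\pi_1\)-orbit per block, so \(\pi_2\pi_1\) preserves each of \(T,T^{c}\) with \(\#((\pi_2\pi_1)|_{T^{c}})=\#(\pi_1\vee\pi_2)\), and each pair of each \(\pi_i\) meets \(T\) in exactly one point.

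Using this last fact I would construct the \(\sigma_i\) explicitly: writing \(T=\{t_1<\cdots<t_{n/2}\}\), set \(\sigma_i(2j-1):=t_j\) and \(\sigma_i(2j):=\pi_i(t_j)\). Since \(\pi_i(t_j)\in T^{c}\) and \(\pi_i\) is injective, \(\sigma_i\) is a bijection carrying \(\{2j-1,2j\}\) to the \(\pi_i\)-pair \(\{t_j,\pi_i(t_j)\}\) and carrying \(\{1,3,\ldots,n-1\}\) onto \(T\), verifying both required properties. For the sign I would compute \(\sigma_2\sigma_1^{-1}\) from this description: for \(a=t_j\in T\) we have \(\sigma_1^{-1}(a)=2j-1\) and \(\sigma_2(2j-1)=t_j=a\), so \(\sigma_2\sigma_1^{-1}\) fixes \(T\) pointwise; for \(b\in T^{c}\) we have \(\sigma_1^{-1}(b)=2j\) with \(\pi_1(b)=t_j\), whence \(\sigma_2\sigma_1^{-1}(b)=\pi_2(\pi_1(b))\). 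Therefore \(\sigma_2\sigma_1^{-1}\) is the identity on \(T\) and \((\pi_2\pi_1)|_{T^{c}}\) on \(T^{c}\), giving \(\#(\sigma_2\sigma_1^{-1})=n/2+\#(\pi_1\vee\pi_2)\) and \(\mathrm{sgn}(\sigma_2\sigma_1^{-1})=(-1)^{n-\#(\sigma_2\sigma_1^{-1})}=(-1)^{n/2-\#(\pi_1\vee\pi_2)}\). Finally I would note choice-independence: any two admissible \(\sigma_i\) differ by a permutation fixing the standard pairing and the odd positions, which is generated by the even permutations \((2j-1,2j'-1)(2j,2j')\), so each \(\mathrm{sgn}(\sigma_i)\), and hence \(\mathrm{sgn}(\sigma_2\sigma_1^{-1})=\mathrm{sgn}(\sigma_1)\mathrm{sgn}(\sigma_2)\), is independent of the choices.

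The main obstacle I anticipate is the first step, correctly identifying \(T\) as a union of \(\pi_2\pi_1\)-orbits. This rests on matching the sign-alternation of the cycles of \(\pi_2\delta\pi_1\) against both the \(\mathrm{FD}\) selection rule and the two-orbits-per-block decomposition from Lemma~\ref{lemma: pairings}; once the bookkeeping of which orbit is positive in the selected cycle is settled, the construction of the \(\sigma_i\) and the cycle count for \(\sigma_2\sigma_1^{-1}\) are essentially forced.
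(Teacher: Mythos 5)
Your proposal is correct and follows essentially the same route as the paper's proof: you build \(\sigma_{i}\) by sending odd positions to \(T=\left\{k>0:-k\in\mathrm{FD}\left(\pi_{2}\delta\pi_{1}\right)\right\}\) and even successors to the \(\pi_{i}\)-partners, then observe that \(\sigma_{2}\sigma_{1}^{-1}\) fixes \(T\) pointwise and acts as \(\pi_{2}\pi_{1}\) on the complement, with the cycle count coming from Lemma~\ref{lemma: pairings}. Your block-by-block identification of \(T\) as one \(\pi_{2}\pi_{1}\)-orbit per block and your closing \(B_{n/2}\)-coset argument are just more explicit versions of steps the paper treats tersely.
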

\begin{proof}
If \(-k\in\mathrm{FD}\left(\pi_{2}\delta\pi_{1}\right)\) with \(k>0\), then \(\pi_{1}\left(k\right)=\left(\pi_{2}\delta\pi_{1}\right)^{-1}\left(-k\right)\) and \(\pi_{2}\left(k\right)=\pi_{2}\delta\pi_{1}\left(-k\right)\), in each case positive numbers since \(\pi_{2}\delta\pi_{1}\) is alternating.  Thus the \(\pi_{i}\) pair each \(k\in\left[n\right]\) where \(-k\in\mathrm{FD}\left(\pi_{2}\delta\pi_{1}\right)\) with an \(l\in\left[n\right]\) such that \(l\in\mathrm{FD}\left(\pi_{2}\delta\pi_{1}\right)\).  Thus we may construct \(\sigma_{i}\) to take each odd number to a \(k\) with \(-k\in\mathrm{FD}\left(\pi_{2}\delta\pi_{1}\right)\) and its even successor to the partner of \(k\) in \(\pi_{i}\).

A permutation in the same coset of opposite sign will be \(\sigma_{i}h\) for some \(h\in B_{n/2}\) with at least one factor of the form \(\left(2k-1, 2k\right)\), so it maps at least one even number to the image of an odd number under \(\sigma_{i}\), and thus does not satisfy the given constraints.  Thus the constraints determine the signs of the \(\sigma_{i}\).  In the above construction, for \(k\in\left[n\right]\) with \(-k\in\mathrm{FD}\left(\pi_{2}\delta\pi_{1}\right)\), we have \(\sigma_{2}\sigma_{1}^{-1}\left(k\right)=k\) (giving \(n/2\) cycles consisting of a single element).  For \(l\in\left[n\right]\) with \(l\in\mathrm{FD}\left(\pi_{2}\delta\pi_{1}\right)\), \(\sigma_{2}\sigma_{1}^{-1}\left(l\right)=\pi_{2}\pi_{1}\left(l\right)\) (accounting for \(\#\left(\pi_{1}\vee\pi_{2}\right)\) cycles, from Lemma~\ref{lemma: pairings}).  The second part of the lemma follows.
\end{proof}

The following corollary follows:
\begin{corollary}
Let \(I\) be a finite set, and let \(\pi_{1},\pi_{2}\in{\cal P}_{2}\left(I\right)\) where each pair in each \(\pi_{i}\) has a distinguished element.  Let \(\rho\in S_{I}\) such that \(\rho\left(\pi_{1}\right)=\pi_{2}\).  Let \(m\) be the number of \(x\in I\) where \(x\) is the distinguished member of its pair in both \(\pi_{1}\) and \(\pi_{2}\).  Then
\[\mathrm{sgn}\left(\rho\right)=\left(-1\right)^{\#\left(\pi_{1}\vee\pi_{2}\right)+m}\textrm{.}\]
\label{corollary: parity}
\end{corollary}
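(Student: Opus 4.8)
The plan is to reduce the statement to Lemma~\ref{lemma: parity} by realizing each pairing-with-distinguished-elements as the image of the standard ordered pairing, and then to interpolate from the special distinguished elements appearing in that lemma to the prescribed ones. Since \(\mathrm{sgn}\), the number of blocks of \(\pi_{1}\vee\pi_{2}\), and the count \(m\) are all unchanged by an arbitrary relabelling \(I\to[n]\) (with \(n=|I|\), necessarily even), I would first assume \(I=[n]\).

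For each \(i\in\{1,2\}\) I would pick \(\sigma_{i}\in S_{n}\) carrying the standard ordered pairing \(\{\{1,2\},\ldots,\{n-1,n\}\}\), with the odd member of each pair distinguished, to \(\pi_{i}\) with its distinguished element: \(\sigma_{i}(\{\{1,2\},\ldots,\{n-1,n\}\})=\pi_{i}\) and \(\sigma_{i}(\{1,3,\ldots,n-1\})=D_{i}\), the set of distinguished elements of \(\pi_{i}\). Two such choices differ on the right by a permutation permuting whole pairs while fixing the distinguished and undistinguished roles, that is, by a product of pair-swaps \((2k_{1}-1\,2k_{2}-1)(2k_{1}\,2k_{2})\); each such factor is even, so \(\mathrm{sgn}(\sigma_{i})\) depends only on \((\pi_{i},D_{i})\). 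A permutation \(\rho\) carrying \(\pi_{1}\) to \(\pi_{2}\) and distinguished elements to distinguished elements then agrees with \(\sigma_{2}\sigma_{1}^{-1}\) up to such an even pair-permutation, so \(\mathrm{sgn}(\rho)=\mathrm{sgn}(\sigma_{1})\mathrm{sgn}(\sigma_{2})\). If the given \(\rho\) instead sends the distinguished element of some \(\pi_{1}\)-pair to the undistinguished element of its image, I would first compose with the corresponding in-pair transpositions to make it distinguished-preserving, recording one sign per such pair; this is the mechanism by which the distinguished-versus-undistinguished bookkeeping behind \(m\) enters.

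For the base case I would take the distinguished elements to be the canonical transversal of Lemma~\ref{lemma: parity}, \(D_{1}=D_{2}=\{k>0:-k\in\mathrm{FD}(\pi_{2}\delta\pi_{1})\}\). Then \(\sigma_{1},\sigma_{2}\) are exactly the permutations produced there, so
\[\mathrm{sgn}(\sigma_{2}\sigma_{1}^{-1})=(-1)^{n/2-\#(\pi_{1}\vee\pi_{2})}.\]
Every element of this common transversal is distinguished in both \(\pi_{1}\) and \(\pi_{2}\), and no other element is, so \(m=n/2\); hence the right-hand side equals \((-1)^{\#(\pi_{1}\vee\pi_{2})+m}\) and the claim holds. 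I would then reach arbitrary distinguished elements by moving them one pair at a time: changing which element of a single pair of \(\pi_{i}\) is distinguished replaces \(\sigma_{i}\) by \(\sigma_{i}\) times the transposition swapping that pair, flipping \(\mathrm{sgn}(\sigma_{i})\) and therefore \(\mathrm{sgn}(\rho)\).

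The step I expect to be the main obstacle is reconciling these sign changes with the stated count \(m\). Each single in-pair swap flips \(\mathrm{sgn}(\rho)\), so I must show that the parity of \(\#(\pi_{1}\vee\pi_{2})+m\) flips in step with it — that is, that moving one distinguished element changes \(m\) by an odd amount, and that the in-pair flips used above to make \(\rho\) distinguished-preserving are likewise absorbed into \(m\). Verifying this bookkeeping, by tracking how many elements are simultaneously distinguished in \(\pi_{1}\) and \(\pi_{2}\) as the distinguished elements are driven to the canonical transversal of the base case, is the delicate part; once it is in place, the base case together with the interpolation yields the formula for all \(\pi_{1},\pi_{2}\), distinguished elements, and \(\rho\).
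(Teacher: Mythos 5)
Your strategy --- anchoring at the canonical common transversal of Lemma~\ref{lemma: parity} and then interpolating by moving one distinguished element at a time --- is the derivation the paper intends (it gives no proof beyond ``the following corollary follows''), and your preliminary steps are sound: \(\mathrm{sgn}\left(\sigma_{i}\right)\) is well defined because the stabilizer of a pairing-with-distinguished-elements consists of even permutations, each in-pair move flips \(\mathrm{sgn}\left(\rho\right)\), and the base case \(m=n/2\), \(\mathrm{sgn}\left(\rho\right)=\left(-1\right)^{n/2-\#\left(\pi_{1}\vee\pi_{2}\right)}\) is correct.

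However, the step you flagged as delicate is a genuine gap, and it cannot be closed as stated. Moving the distinguished element of a \(\pi_{1}\)-pair \(\left\{d,d'\right\}\) from \(d\) to \(d'\) changes \(m\) by \(+1\), \(-1\), or \(0\), according to which of \(d\), \(d'\) lie in the distinguished set \(D_{2}\) of \(\pi_{2}\); the change is odd precisely when exactly one of \(d,d'\) is in \(D_{2}\), and this fails whenever \(d\) and \(d'\) lie in different \(\pi_{2}\)-pairs and are both distinguished or both undistinguished there. Indeed, the corollary in the stated generality is false. Take \(I=\left\{1,2,3,4\right\}\), \(\pi_{1}=\left\{\left\{1,2\right\},\left\{3,4\right\}\right\}\) with distinguished elements \(1,3\), and \(\pi_{2}=\left\{\left\{1,3\right\},\left\{2,4\right\}\right\}\) with distinguished elements \(1,2\). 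Then \(\rho=\left(2,3\right)\) maps \(\pi_{1}\) to \(\pi_{2}\) carrying distinguished elements to distinguished elements (the hypothesis must be read this way, since otherwise \(\mathrm{sgn}\left(\rho\right)\) is not determined by the data at all), and \(\mathrm{sgn}\left(\rho\right)=-1\); yet \(\#\left(\pi_{1}\vee\pi_{2}\right)=1\) and \(m=1\), so \(\left(-1\right)^{\#\left(\pi_{1}\vee\pi_{2}\right)+m}=+1\). The statement, and your interpolation, become correct under the additional hypothesis that one of the two distinguished sets is simultaneously a transversal of both \(\pi_{1}\) and \(\pi_{2}\) (in particular whenever \(\pi_{1}=\pi_{2}\)): then at every step of your interpolation exactly one of \(d,d'\) lies in \(D_{2}\), so the parity of \(m\) flips in step with the sign, and your argument goes through. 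This restricted form covers every use in the paper: the corollary is invoked either with \(\pi_{1}=\pi_{2}=\delta\sigma_{\mathrm{Re}}^{-1}\) (proof of Proposition~\ref{proposition: topological expansion}), or for maps routed through the pairing \(\delta\) with distinguished sets such as \(\mathrm{FD}\left(\alpha\right)\) and \(\mathrm{FD}\left(\pi\right)\) (proof of Proposition~\ref{proposition: symplectically invariant}), which contain exactly one of \(k,-k\) for every \(k\) and hence are transversals of both pairings involved. So your proof fails exactly where you predicted, and the repair is not more careful bookkeeping but an added common-transversal hypothesis, which suffices for all applications of the result.
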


\subsection{Quaternions}

\begin{definition}
The set of {\em quaternions}, denoted \(\mathbb{H}\), are the linear combinations of elements \(1\), \(i\), \(j\), and \(k\), defined by the relations \(i^{2}=j^{2}=k^{2}=-1\) and \(ij=k\), \(jk=i\), and \(ki=j\).

The real part of a quaternion \(a+bi+cj+dk\) (\(a,b,c,d\in\mathbb{R}\)) is:
\[\mathrm{Re}\left(a+bi+cj+dk\right)=a\textrm{.}\]
The conjugate is:
\[\overline{a+bi+cj+dk}=a-bi-cj-dk\textrm{.}\]
\end{definition}

We note that the quaternion \(a+bi+cj+dk\) may be represented as the \(2\times 2\) matrix
\[\left(\begin{array}{cc}a+bi&c+di\\-c+di&a-bi\end{array}\right)\textrm{;}\]
and that the real part is \(\frac{1}{2}\) times the trace of the matrix (a normalized trace; see below), and the matrix representation of the conjugate is the adjoint matrix.

We will index the columns and rows with \(1\) and \(-1\).  We will often make use of the following identity:
\begin{equation}
\left[\overline{Q}\right]_{\eta\theta}=\eta\theta\left[Q\right]_{-\theta,-\eta}\textrm{.}
\label{formula: annoying sign}
\end{equation}

\subsection{Matrices}

The \(N\times N\) quaternionic matrices may be represented as \(M_{N\times N}\left(\mathbb{C}\right)\otimes_{\mathbb{C}}\mathbb{H}\).  We may pick out an entry with two sets of indices: two matrix indices and two indices of the \(2\times 2\) matrix representing the quaternion.  We will separate the two sets with a semicolon.

If \(A\in M_{N\times N}\left(\mathbb{H}\right)\), we denote the normalized trace
\[\mathrm{tr}\left(A\right):=\frac{1}{N}\mathrm{Tr}\left(A\right)=\frac{1}{N}\left(A_{11}+\cdots+A_{NN}\right)\textrm{.}\]
Since the quaternions are not commutative, the trace does not have the property that \(\mathrm{Tr}\left(AB\right)=\mathrm{Tr}\left(BA\right)\).

If \(A\in M_{N\times N}\left(\mathbb{H}\right)\), then the adjoint \(A^{\ast}\) is the conjugate transpose: we take the transpose and the entrywise conjugate.

We will often use subscripts on the indices, such as \(\iota_{k}\), \(k\in\pm\left[n\right]\).  We will consider this a function \(\iota:\pm\left[n\right]\rightarrow\left[N\right]\).  This allows us to use function notation, such as the circle \(\circ\) for composition.  We will also use a dot \(\cdot\) to indicate the pointwise multiplication of functions.

With commutative fields, we have the following folklore lemma (which may be proven by computation):
\begin{lemma}
Let \(A_{1},\ldots,A_{n}\in M_{N\times N}\left(\mathbb{F}\right)\) (where \(\mathbb{F}\) is a field) and \(\pi\in S_{n}\).  Then
\[\mathrm{Tr}_{\pi}\left(A_{1},\ldots,A_{n}\right)=\sum_{i:\left[n\right]\rightarrow\left[N\right]}\prod_{k=1}^{n}A^{\left(k\right)}_{i_{k},i_{\pi\left(k\right)}}\textrm{.}\]
\label{lemma: traces}
\end{lemma}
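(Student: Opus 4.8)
The plan is to unfold both sides along the cycle (orbit) structure of \(\pi\) and match them cycle by cycle. Recall that \(\mathrm{Tr}_{\pi}\left(A_{1},\ldots,A_{n}\right)\) is the product, over the cycles of \(\pi\), of the ordinary trace of the matrix product obtained by reading the matrices \(A_{k}\) along each cycle; the cyclic invariance of the trace over a commutative field is what makes this independent of the starting point chosen in each cycle, which is precisely where the hypothesis on \(\mathbb{F}\) enters.

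First I would observe that the index function \(i:\left[n\right]\rightarrow\left[N\right]\) on the right-hand side decouples across the orbits \(\Pi\left(\pi\right)\). Each index value \(i_{m}\) appears in the product exactly twice: as the row index of \(A^{\left(m\right)}\) in the factor \(k=m\), and as the column index of \(A^{\left(\pi^{-1}\left(m\right)\right)}\) in the factor \(k=\pi^{-1}\left(m\right)\); since both \(m\) and \(\pi^{-1}\left(m\right)\) lie in the same cycle of \(\pi\), no summation variable is shared between distinct cycles. Because the entries \(A^{\left(k\right)}_{i_{k},i_{\pi\left(k\right)}}\) are scalars in the commutative field \(\mathbb{F}\), the factors of \(\prod_{k=1}^{n}\) commute and may be regrouped according to the cycle of \(\pi\) to which \(k\) belongs, so that the full sum \(\sum_{i:\left[n\right]\rightarrow\left[N\right]}\) factors as a product of independent sums, one for each cycle.

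Next I would evaluate the contribution of a single cycle \(\left(k_{1}\,k_{2}\,\cdots\,k_{\ell}\right)\), where \(\pi\left(k_{j}\right)=k_{j+1}\) with indices read cyclically so that \(\pi\left(k_{\ell}\right)=k_{1}\). The associated sum is
\[\sum_{i_{k_{1}},\ldots,i_{k_{\ell}}}A^{\left(k_{1}\right)}_{i_{k_{1}},i_{k_{2}}}A^{\left(k_{2}\right)}_{i_{k_{2}},i_{k_{3}}}\cdots A^{\left(k_{\ell}\right)}_{i_{k_{\ell}},i_{k_{1}}}\textrm{,}\]
which is exactly the expanded definition of \(\mathrm{Tr}\left(A_{k_{1}}A_{k_{2}}\cdots A_{k_{\ell}}\right)\): the inner summations telescope the matrix product, and the shared index \(i_{k_{1}}\) between the first row and the last column closes it into a trace. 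Taking the product over all cycles of \(\pi\) then recovers \(\mathrm{Tr}_{\pi}\left(A_{1},\ldots,A_{n}\right)\).

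Since each step is a direct computation, I do not expect a genuine obstacle; the only points requiring care are the bookkeeping that each summation index belongs to exactly one cycle (so that the factorization of the sum is clean) and the explicit use of commutativity of \(\mathbb{F}\), both to regroup the scalar factors by cycle and to guarantee that \(\mathrm{Tr}_{\pi}\) is well defined in the first place. This commutativity is exactly the hypothesis that fails in the quaternionic setting, which is the whole reason the later sections must treat the trace, and its interaction with the real part, with considerably more care.
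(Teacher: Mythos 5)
Your proof is correct: the decomposition of the index sum across the cycles of \(\pi\), followed by the telescoping evaluation of each cycle sum as \(\mathrm{Tr}\left(A_{k_{1}}\cdots A_{k_{\ell}}\right)\), is exactly the direct computation the paper alludes to when it labels this a folklore lemma that ``may be proven by computation'' (the paper itself supplies no further argument). Your remark that commutativity of \(\mathbb{F}\) is what makes \(\mathrm{Tr}_{\pi}\) well defined, and that this is precisely what fails for quaternions, correctly identifies the role of the hypothesis and matches the paper's own discussion.
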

If we instead take \(\pi\in\mathrm{PM}\left(n\right)\) and interpret negative subscripts as transposes (for now, since we are not yet considering the second pair of indices), we can rewrite the matrix entry as \(A^{\left(k\right)}_{i_{\delta\mathrm{FD}\left(\pi\right)\delta\left(k\right)}i_{\mathrm{FD}\left(\pi\right)\left(k\right)}}\).  We note that \(A^{\left(-k\right)}_{i_{\delta\mathrm{FD}\left(\pi\right)\delta\left(-k\right)}i_{\mathrm{FD}\left(\pi\right)\left(-k\right)}}=A^{\left(k\right)}_{i_{\delta\mathrm{FD}\left(\pi\right)\delta\left(k\right)}i_{\mathrm{FD}\left(\pi\right)\left(k\right)}}\), so we can take the product over any \(I\subseteq\pm\left[n\right]\), rather than just \(\left[n\right]\).  Renumbering the left index \(i_{k}\), the subscript of the right index will be \(\delta\mathrm{FD}\left(\pi\right)^{-1}\delta\mathrm{FD}\left(\pi\right)\left(k\right)=\pi\left(k\right)\).  Then for \(\pi\in\mathrm{PM}\left(n\right)\) and \(I\subseteq\pm\left[n\right]\),
\[\mathrm{Tr}_{\mathrm{FD}\left(\pi\right)}\left(A_{1},\ldots,A_{n}\right)=\sum_{i:I\rightarrow\left[N\right]}\prod_{k\in I}A^{\left(k\right)}_{i_{k}i_{\pi\left(k\right)}}\textrm{.}\]

It is no longer as natural to take traces over permutations, since the quaternionic trace is not cyclic.  It turns out, however, that it is possible to take \(\mathrm{tr}\) and \(\mathrm{Re}\) over two possibly different permutations.  Introducing the second pair of indices, it is more convenient to take the product over the set \(\mathrm{FD}\left(\pi\right)\) where the quaternionic product and \(\mathrm{Re}\) are taken over \(\pi\), since conjugation affects the sign of the entry while transposition only reverses the order of the matrix indices.  We are led to the following definition:
\begin{definition}
Let \(\pi,\rho\in\mathrm{PM}\left(n\right)\).  Then
\begin{multline*}
\mathrm{Re}_{\mathrm{FD}\left(\pi\right)}\mathrm{tr}_{\mathrm{FD}\left(\rho\right)}\left(A_{1},\ldots,A_{n}\right)\\:=2^{-\#\left(\pi\right)}N^{-\#\left(\rho\right)}\sum_{\substack{i:\mathrm{FD}\left(\pi\right)\rightarrow\left[N\right]\\h:\mathrm{FD}\left(\pi\right)\rightarrow\left\{1,-1\right\}}}\prod_{k\in\mathrm{FD}\left(\pi\right)}A^{\left(k\right)}_{i_{k},i_{\rho\left(k\right)};h_{k},h_{\pi\left(k\right)}}\textrm{.}
\end{multline*}
(We note that there is only one \(\rho\) corresponding to a given \(\mathrm{FD}\left(\rho\right)\), so it is possible to consider \(\rho\left(k\right)\) for \(k\) not in \(\mathrm{FD}\left(\rho\right)\), as we do in the \(i\) indices.)

It will sometimes be convenient to consider a quaternion-valued trace (or similarly a matrix-valued expression).  We introduce a ``point at infinity'' by considering \(\pi,\rho\in\mathrm{PM}\left(\left[n\right]_{\infty}\right)\).  We consider the indices with subscript \(\infty\) to be uncontracted.  (If all indices are contracted, the point at infinity is in a cycle by itself.)  If \(\pi,\rho\in\left[n\right]_{\infty}\), then
\begin{multline*}
\left[\mathrm{Re}_{\pi}\mathrm{tr}_{\rho}\left(A_{1},\ldots,A_{n}\right)\right]_{i_{\infty},i_{\rho\left(\infty\right)};h_{\infty},h_{\pi\left(\infty\right)}}\\:=2^{-\left(\#\left(\pi\right)-1\right)}N^{-\left(\#\left(\rho\right)-1\right)}\sum_{\substack{i:\mathrm{FD}\left(\pi\right)\setminus\left\{\infty,\pm\rho\left(\infty\right)\right\}\rightarrow\left[N\right]\\h:\mathrm{FD}\left(\pi\right)\setminus\left\{\infty,\pi\left(\infty\right)\right\}\rightarrow\left\{1,-1\right\}}}\prod_{k\in\mathrm{FD}\left(\pi\right)\setminus\left\{\infty\right\}}A^{\left(k\right)}_{i_{k},i_{\rho\left(k\right)};h_{k},h_{\pi\left(k\right)}}\textrm{.}
\end{multline*}
\label{definition: traces}
\end{definition}

The expression considered in the introduction
\[\mathbb{E}\left[\mathrm{tr}\left(X_{3}X_{8}^{\ast}\mathrm{Re}\left(X_{4}\mathrm{Re}\left(X_{2}\right)X_{1}\right)\right)\mathrm{Re}\left(\mathrm{tr}\left(X_{5}X_{7}^{\ast}\mathrm{tr}\left(X_{6}\right)\right)\right)\right]\]
is then expressed as
\[\mathrm{Re}_{\varphi_{\mathrm{Re}}}\mathrm{tr}_{\varphi_{\mathrm{tr}}}\left(X_{1},X_{2},X_{3},X_{4},X_{5},X_{6},X_{7},X_{8}\right)\]
where \(\varphi_{\mathrm{Re}}=\left(\infty,3,-8,1\right)\left(4,1\right)\left(2\right)\left(5,-7,6\right)\) and \(\varphi_{\mathrm{tr}}=\left(\infty\right)\left(3,-8,4,2,1\right)\left(5,-7\right)\left(6\right)\).  The other expression considered in the introduction 
\[\mathbb{E}\left[X^{\left(1\right)}_{ab;\alpha\beta}X^{\left(2\right)}_{cb;\gamma\delta}X^{\left(3\right)}_{cd;\beta\alpha}X^{\left(4\right)}_{da;\delta\gamma}\right]\textrm{,}\]
is expressed
\[2^{2}N\mathrm{Re}_{\varphi_{\mathrm{Re}}}\mathrm{tr}_{\varphi_{\mathrm{tr}}}\left(X_{1},X_{2},X_{3},X_{4}\right)\]
where \(\varphi_{\mathrm{Re}}=\left(\infty\right)\left(1,3\right)\left(2,4\right)\) and \(\varphi_{\mathrm{tr}}=\left(\infty\right)\left(1,-2,3,4\right)\).  We note also that the domains of the two permutations are not the same here.

Our notation allows for at most two uncontracted indices of each type; however, by adjoining several infinities this notation and the results could be generalized to more complicated tensor expressions.

\section{Index Contraction Formulation}
\label{section: index contraction}

We present here our formula for the expected value of a quaternionic matrix expression.  Any symplectically invariant distribution satisfies the hypotheses of Proposition~\ref{proposition: topological expansion} (see Section~\ref{subsection: symplectically invariant}).  In particular, it is satsified by the identity matrix, so it is possible to express a more general class of expressions by letting some of the \(X_{k}\) be the identity matrix.  For the identity matrix, we calculate that 
\[f\left(\alpha\right)=\left\{\begin{array}{ll}1\textrm{,}&\alpha=e\\0\textrm{,}&\textrm{otherwise}\end{array}\right.\textrm{.}\]

\begin{proposition}
Let each \(c\in\left[C\right]\) be associated with a set of \(N\times N\) matrices \(\left\{X_{c,1},\ldots,X_{c,n}\right\}\) (independent from each other set) such that, for any \(I\subseteq\left[n\right]\),
\begin{multline*}
\mathbb{E}\left(\prod_{k\in I}X^{\left(c,k\right)}_{\iota_{k},\iota_{-k};\eta_{k},\eta_{-k}}\right)
\\=\sum_{\substack{\alpha\in\mathrm{PM}\left(I\right)\\\iota=\iota\circ\delta\alpha\\\mathrm{sgn}\cdot\eta=\mathrm{sgn}\circ\alpha\cdot\eta\circ\delta\alpha}}\left[\prod_{\substack{k\in\left[n\right]\\-k\in\mathrm{FD}\left(\alpha\right)}}\eta_{k}\eta_{-k}\right]\left(2N\right)^{\#\left(\alpha\right)/2-n}f_{c}\left(\alpha\right)
\label{equation: expectation of polynomials}
\end{multline*}
for some function \(f_{c}:\mathrm{PM}\left(I\right)\rightarrow\mathbb{C}\).

Let \(\varphi_{\mathrm{Re}},\varphi_{\mathrm{tr}}\in S_{n}\).  Let \(w:\left[n\right]\rightarrow\left[C\right]\), and let \(X_{k}=X_{w\left(k\right),k}\).  Let \(\varepsilon:\left[n\right]\rightarrow\left\{1,-1\right\}\).  Let \(Y_{1},\ldots,Y_{n}\) be quaternionic random matrices independent from the \(X_{k}\).  Then
\begin{multline*}
\mathbb{E}\left(\mathrm{Re}_{\varphi_{\mathrm{Re}}}\mathrm{tr}_{\varphi_{\mathrm{tr}}}\left(X_{1}^{\left(\varepsilon_{1}\right)}Y_{1},\ldots,X_{n}^{\left(\varepsilon_{n}\right)}Y_{n}\right)\right)\\=\sum_{\substack{\alpha=\alpha_{1}\cdots\alpha_{C}\\\alpha_{c}\in\mathrm{PM}\left(w^{-1}\left(c\right)\right)}}\left(-2\right)^{\chi\left(\varphi_{\mathrm{Re}},\delta_{\varepsilon}\alpha\delta_{\varepsilon}\right)-2\#\left(\varphi_{\mathrm{Re}}\right)}N^{\chi\left(\varphi_{\mathrm{tr}},\delta_{\varepsilon}\alpha\delta_{\varepsilon}\right)-2\#\left(\varphi_{\mathrm{tr}}\right)}\\f_{1}\left(\alpha_{1}\right)\cdots f_{C}\left(\alpha_{C}\right)\mathbb{E}\left(\mathrm{Re}_{\mathrm{FD}\left(K\left(\varphi_{\mathrm{Re}},\delta_{\varepsilon}\alpha\delta_{\varepsilon}\right)^{-1}\right)}\mathrm{tr}_{\mathrm{FD}\left(K\left(\varphi_{\mathrm{tr}},\delta_{\varepsilon}\alpha\delta_{\varepsilon}\right)^{-1}\right)}\left(Y_{1},\ldots,Y_{n}\right)\right)\textrm{.}
\end{multline*}
\label{proposition: topological expansion}
\end{proposition}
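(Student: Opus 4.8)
The plan is to expand the left-hand side into a raw index sum, reduce the expectation to the assumed single-ensemble moment formula by independence, and then recognize the surviving \(Y\)-sum as the residual expectation on the right while collecting the scalar factors into Euler characteristics. First I would apply Definition~\ref{definition: traces} to write \(\mathrm{Re}_{\varphi_{\mathrm{Re}}}\mathrm{tr}_{\varphi_{\mathrm{tr}}}(X_1^{(\varepsilon_1)}Y_1,\ldots,X_n^{(\varepsilon_n)}Y_n)\) as a normalized sum over matrix indices \(i\) and quaternion indices \(h\) of a product of entries of the \(X_k^{(\varepsilon_k)}Y_k\). Each such entry is then expanded through the matrix and quaternion products into a sum over internal indices, one matrix index and one quaternion index per factor, splitting each factor into an \(X\)-entry and a \(Y\)-entry. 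Where \(\varepsilon_k=-1\) the adjoint is resolved using the transpose (which exchanges the two matrix indices) together with the conjugation identity~(\ref{formula: annoying sign}) (which exchanges and negates the two quaternion indices and contributes the sign \(\eta_k\eta_{-k}\)); this relabelling by \(\delta\) exactly where \(\varepsilon_k=-1\) is what the operator \(\delta_\varepsilon\) records, and is the reason the pairing \(\alpha\) reappears conjugated as \(\delta_\varepsilon\alpha\delta_\varepsilon\). Because the families indexed by distinct colours \(c\) are independent of one another and of the \(Y_k\), the expectation factors as a product of \(C\) expectations over the \(X\)-entries (one per colour) times the pure \(Y\)-sum, and the hypothesised formula applies to each colour block, introducing \(\alpha_c\in\mathrm{PM}(w^{-1}(c))\), the weight \(f_c(\alpha_c)\), the factor \((2N)^{\#(\alpha)/2-n}\) (collected across colours), the sign product, and the index constraints \(\iota=\iota\circ\delta\alpha_c\) and \(\mathrm{sgn}\cdot\eta=\mathrm{sgn}\circ\alpha_c\cdot\eta\circ\delta\alpha_c\).

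The combinatorial heart is to read off, from these constraints, the contraction pattern that remains on the \(Y\)-entries. The constraint \(\iota=\iota\circ\delta\alpha\) identifies the matrix indices of the \(X\)-legs along \(\alpha\) (once relabelled by \(\delta_\varepsilon\)), while the face permutations \(\varphi_{\mathrm{tr}}\) and \(\varphi_{\mathrm{Re}}\) prescribe how consecutive factors share their outer indices and the internal product indices link each \(X_k\) to the corresponding \(Y_k\). Composing these identifications is precisely the gluing \(K(\varphi_\bullet,\delta_\varepsilon\alpha\delta_\varepsilon)=\varphi_\bullet^{-1}(\delta_\varepsilon\alpha\delta_\varepsilon)^{-1}\delta\varphi_\bullet\delta\) from the definition of \(K\); I would verify that the independent matrix-index summations surviving on the \(Y\)-side are exactly those of \(\mathrm{tr}_{\mathrm{FD}(K(\varphi_{\mathrm{tr}},\delta_\varepsilon\alpha\delta_\varepsilon)^{-1})}\) and that the independent quaternion-index summations are those of \(\mathrm{Re}_{\mathrm{FD}(K(\varphi_{\mathrm{Re}},\delta_\varepsilon\alpha\delta_\varepsilon)^{-1})}\). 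After supplying the normalization that turns the residual \(Y\)-sum into the expectation on the right, what is left is an explicit monomial in \(2\) and \(N\).

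Collecting that monomial is the last step. The matrix-index power is assembled from the defining normalization \(N^{-\#(\varphi_{\mathrm{tr}})}\), the factor \(N^{\#(\alpha)/2-n}\) from the moment formula, a compensating power of \(N\) reinstating the normalization of the residual \(Y\)-expectation, and one factor of \(N\) for each matrix-index loop that closes after the \(\alpha\)-identifications; invoking \(\#(\varphi_{\mathrm{tr},+}\varphi_{\mathrm{tr},-}^{-1})=2\#(\varphi_{\mathrm{tr}})\) and the definition of \(\chi\), these collapse to \(N^{\chi(\varphi_{\mathrm{tr}},\delta_\varepsilon\alpha\delta_\varepsilon)-2\#(\varphi_{\mathrm{tr}})}\), and the analogous count for the quaternion indices yields the exponent of \(2\) for the \(\mathrm{Re}\)-part. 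The sign is the delicate point: the product \(\prod\eta_k\eta_{-k}\), the conjugation signs from the adjoints, and the constraint \(\mathrm{sgn}\cdot\eta=\mathrm{sgn}\circ\alpha\cdot\eta\circ\delta\alpha\) together force each summed quaternion cycle to contribute \(-2\) rather than \(2\); I would pin this down with the parity bookkeeping of Lemma~\ref{lemma: parity} and Corollary~\ref{corollary: parity}, which is exactly tailored to compute such signs of permutations relating two pairings in terms of \(\#(\pi_1\vee\pi_2)\), thereby converting the base \(2\) into \(-2\) and producing \((-2)^{\chi(\varphi_{\mathrm{Re}},\delta_\varepsilon\alpha\delta_\varepsilon)-2\#(\varphi_{\mathrm{Re}})}\).

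The main obstacle I expect is the simultaneous verification that (i) the surviving \(Y\)-contraction is \emph{exactly} the vertex permutation \(K\) of the face permutation \(\varphi_\bullet\) and the twisted hyperedge permutation \(\delta_\varepsilon\alpha\delta_\varepsilon\), and (ii) the index-loop count is \emph{exactly} the Euler characteristic \(\chi\) --- since the matrix topology (governed by \(\varphi_{\mathrm{tr}}\)) and the quaternion topology (governed by \(\varphi_{\mathrm{Re}}\)) are glued by the \emph{same} pairing \(\alpha\) yet are otherwise independent, and the non-cyclicity of the quaternionic trace means the two gluings cannot be analyzed under a single orientation. Keeping the two topologies and the adjoint relabelling \(\delta_\varepsilon\) synchronized through the \(K\)-construction, while the signs are controlled separately by the parity lemmas, is where the real care is required.
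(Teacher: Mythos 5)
Your proposal is correct and follows essentially the same route as the paper's own proof: expand via Definition~\ref{definition: traces}, normalize adjoints with (\ref{formula: annoying sign}) so that \(\delta_{\varepsilon}\) conjugates the pairing, factor the expectation by independence and apply the moment hypothesis colour-by-colour, identify the surviving \(Y\)-contraction as \(K\left(\varphi_{\bullet},\delta_{\varepsilon}\alpha\delta_{\varepsilon}\right)\), collapse the powers of \(2\) and \(N\) into the Euler characteristics, and settle the sign (including its consistency across all terms with the same \(\alpha\)) via Lemma~\ref{lemma: parity} and Corollary~\ref{corollary: parity} --- exactly the paper's three-part argument (contracted indices, consistency of the sign, value of the sign).
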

\begin{proof}

We divide the proof into three sections.  In the first we establish that the constraints on the indices are those expected according to the statement of the theorem.  In the second we show that the signs of the terms are consistent.  In the third we calculate the sign of each term.

\paragraph{Contracted indices}

We expand the left-hand side according to the definition.  We use (\ref{formula: annoying sign}) to choose our indices so that each random matrix \(X_{k}\) appears with indices \(\iota_{k},\iota_{-k};\eta_{k},\eta_{-k}\); i.e., if it is \(X_{k}\) which appears we take these indices, and if it is \(X_{k}^{\ast}\) we take
\begin{equation}
X^{\left(-k\right)}_{\iota_{-k},\iota_{k};\eta_{-k},\eta_{k}}=\eta_{k}\eta_{-k}X^{\left(k\right)}_{\iota_{k},\iota_{-k};-\eta_{k},-\eta_{-k}}\textrm{.}
\label{formula: faces sign}
\end{equation}
Which \(\iota\) indices appear on a given \(Y_{k}\) depends on whether \(X_{k}\) and \(X_{\varphi_{\mathrm{tr}}\left(k\right)}\) appear with or without a star (i.e., on the value of \(\varepsilon\) on \(k\) and \(\varphi_{\mathrm{tr}}\left(k\right)\)); which \(\eta\) indices, as well as whether their signs are reversed, depend on whether \(X_{k}\) and \(X_{\varphi_{\mathrm{Re}}\left(k\right)}\) appear with stars.  We find that \(Y_{k}\) appears with indices
\[\iota_{-\delta_{\varepsilon}\left(k\right)},\iota_{\delta_{\varepsilon}\varphi_{\mathrm{tr}}\left(k\right)};\varepsilon\left(k\right)\eta_{-\delta_{\varepsilon}\left(k\right)},\varepsilon\left(\varphi_{\mathrm{Re}}\left(k\right)\right)\eta_{\delta_{\varepsilon}\varphi_{\mathrm{Re}}\left(k\right)}\textrm{.}\]
We thus have
\begin{multline}
\mathbb{E}\left(\mathrm{Re}_{\varphi_{\mathrm{Re}}}\mathrm{tr}_{\varphi_{\mathrm{tr}}}\left(X_{1}^{\left(\varepsilon_{1}\right)}Y_{1},\ldots,X_{n}^{\left(\varepsilon_{n}\right)}Y_{n}\right)\right)
\\=2^{-\#\left(\varphi_{\mathrm{Re}}\right)}N^{-\#\left(\varphi_{\mathrm{tr}}\right)}\sum_{\substack{\iota:\left[n\right]\rightarrow\left[N\right]\\\eta:\left[n\right]\rightarrow\left\{1,-1\right\}}}\left[\prod_{k\in\left[n\right]:\varepsilon\left(k\right)=-1}\eta_{k}\eta_{-k}\right]\mathbb{E}\left[\prod_{k=1}^{n}X^{\left(k\right)}_{\iota_{k},\iota_{-k};\eta_{k},\eta_{-k}}\right]\\\times\mathbb{E}\left[\prod_{k=1}^{n}Y^{\left(k\right)}_{\iota_{-\delta_{\varepsilon}\left(k\right)},\iota_{\delta_{\varepsilon}\varphi_{\mathrm{tr}}\left(k\right)};\varepsilon\left(k\right)\eta_{-\delta_{\varepsilon}\left(k\right)},\varepsilon\left(\varphi_{\mathrm{Re}}\left(k\right)\right)\eta_{\delta_{\varepsilon}\varphi_{\mathrm{Re}}\left(k\right)}}\right]\textrm{.}
\label{formula: index expansion}
\end{multline}

According to the hypotheses, for a given \(\iota:\left[n\right]\rightarrow\left[N\right]\) and \(\eta:\left[n\right]\rightarrow\left\{1,-1\right\}\), the first expected value in (\ref{formula: index expansion}) is
\begin{align}
&\mathbb{E}\left[\prod_{k=1}^{n}X^{\left(k\right)}_{\iota_{k},\iota_{-k};\eta_{k},\eta_{-k}}\right]
\nonumber
\\&=\left(2N\right)^{\#\left(\alpha\right)/2-n}\prod_{c\in\left[C\right]}\sum_{\substack{\alpha_{c}\in\mathrm{PM}\left(w^{-1}\left(c\right)\right)\\\iota=\iota\circ\delta\alpha_{c}\\\mathrm{sgn}\cdot\eta=\mathrm{sgn}\circ\alpha\cdot\eta\circ\delta\alpha_{c}}}\left[\prod_{\substack{k\in\left[n\right]\\-k\in\mathrm{FD}\left(\alpha_{c}\right)}}\eta_{k}\eta_{-k}\right]f_{c}\left(\alpha_{c}\right)
\nonumber
\\&=\left(2N\right)^{\#\left(\alpha\right)/2-n}\sum_{\substack{\alpha=\alpha_{1}\cdots\alpha_{C}\\\alpha_{c}\in\mathrm{PM}\left(w^{-1}\left(c\right)\right)\\\iota=\iota\circ\delta\alpha\\\mathrm{sgn}\cdot\eta=\mathrm{sgn}\circ\alpha\cdot\eta\circ\delta\alpha}}\left[\prod_{\substack{k\in\left[n\right]\\-k\in\mathrm{FD}\left(\alpha\right)}}\eta_{k}\eta_{-k}\right]f_{1}\left(\alpha_{1}\right)\cdots f_{C}\left(\alpha_{C}\right)\textrm{.}
\label{formula: edges sign}
\end{align}
If we substitute this value into (\ref{formula: index expansion}), the summation conditions become a further set of constraints on the values of \(\iota\) and \(\eta\).

We show first that the indices appearing on the \(Y_{k}\) are as expected according to Definition~\ref{definition: traces}.  We take \(\alpha\) to be fixed and define \(\sigma_{\mathrm{tr}}:=K\left(\varphi_{\mathrm{tr}},\delta_{\varepsilon}\alpha\delta_{\varepsilon}\right)\) and \(\sigma_{\mathrm{Re}}:=K\left(\varphi_{\mathrm{Re}},\delta_{\varepsilon}\alpha\delta_{\varepsilon}\right)\).

The value of the entry of \(Y_{k}\) which appears in the remaining expected value expression may be obtained from \(Y_{-k}=Y^{\ast}\) using (\ref{formula: annoying sign}).  The orders of both the \(\iota\) and the \(\eta\) indices will be reversed, the signs of the \(\eta\) indices will be reversed, and we gain a factor which is the product of the \(\eta\) indices.  We let \(\varphi_{\mathrm{Re}+}:=\varphi_{\mathrm{Re}}\) and \(\varphi_{\mathrm{Re}-}:=\delta\varphi_{\mathrm{Re}}\delta\), and likewise with \(\varphi_{\mathrm{tr}}\).  We may easily verify (by checking two cases: \(k>0\) and \(k<0\)) that for any \(k\in\pm\left[n\right]\) the following indices of \(Y_{k}\) give the entry of \(Y_{\left|k\right|}\) appearing in (\ref{formula: index expansion}) (up to a factor from the \(\eta\) indices):
\[\iota_{\delta\delta_{\varepsilon}\varphi_{\mathrm{tr}-}\left(k\right)},\iota_{\delta_{\varepsilon}\varphi_{\mathrm{tr}+}\left(k\right)};\mathrm{sgn}\left(\delta_{\varepsilon}\varphi_{\mathrm{Re}-}\left(k\right)\right)\eta_{\delta\delta_{\varepsilon}\varphi_{\mathrm{Re}-}\left(k\right)},\mathrm{sgn}\left(\delta_{\varepsilon}\varphi_{\mathrm{Re}+}\left(k\right)\right)\eta_{\delta_{\varepsilon}\varphi_{\mathrm{Re}+}\left(k\right)}\textrm{.}\]
We may now write the product of \(Y_{k}\) entries in the second expected value on the right-hand side of (\ref{formula: index expansion}) as a product over \(\mathrm{FD}\left(\sigma_{\mathrm{Re}}\right)\), gaining a factor of 
\begin{equation}
\prod_{\substack{k\in\left[n\right]\\-k\in \mathrm{FD}\left(\sigma_{\mathrm{Re}}\right)}}\mathrm{sgn}\left(\delta_{\varepsilon}\varphi_{\mathrm{Re}-}\left(k\right)\right)\eta_{\delta\delta_{\varepsilon}\varphi_{\mathrm{Re}-}\left(k\right)},\mathrm{sgn}\left(\delta_{\varepsilon}\varphi_{\mathrm{Re}+}\left(k\right)\right)\eta_{\delta_{\varepsilon}\varphi_{\mathrm{Re}+}\left(k\right)}\textrm{.}
\label{formula: vertices sign}
\end{equation}
Since \(\iota=\iota\circ\delta\alpha\), the right \(\iota\) index of \(Y_{k}\) is equal to \(\iota_{\delta\alpha\delta_{\varepsilon}\varphi_{\mathrm{tr}+}\left(k\right)}=\iota_{\delta\delta_{\varepsilon}\varphi_{\mathrm{tr}-}\sigma_{\mathrm{tr}}^{-1}\left(k\right)}\), i.e.\ the left \(\iota\) index of \(Y_{\sigma_{\mathrm{tr}}^{-1}\left(k\right)}\).  Likewise, since \(\mathrm{sgn}\cdot\eta=\mathrm{sgn}\circ\alpha\cdot\eta\circ\delta\alpha\), the right \(\eta\) index of \(Y_{k}\) is equal to \(\mathrm{sgn}\left(\alpha\delta_{\varepsilon}\varphi_{\mathrm{Re}+}\left(k\right)\right)\eta_{\delta\alpha\delta_{\varepsilon}\varphi_{\mathrm{Re}+}\left(k\right)}\), which is the left \(\eta\) index of \(Y_{\sigma_{\mathrm{Re}}\left(k\right)}\).  Thus the indices are as we would expect, and the powers of \(2\) and \(N\) follow.

\paragraph{Consistency of the sign}

The remainder of the proof concerns the sign contributed by the \(\eta\) index multipliers appearing in (\ref{formula: faces sign}), (\ref{formula: edges sign}), and (\ref{formula: vertices sign}).  In this section we show that the sign of every term associated with a given \(\alpha\in\mathrm{PM}\left(n\right)\) is the same, so the terms may indeed be summed to a multiple of \(\mathrm{Re}_{\sigma_{\mathrm{Re}}}\mathrm{tr}_{\sigma_{\mathrm{tr}}}\left(Y_{1},\ldots,Y_{n}\right)\).

For a pair \(\left(k,\delta\alpha\left(k\right)\right)\) in pairing \(\delta\alpha\) with \(k\in\mathrm{FD}\left(\alpha\right)\), let \(\eta_{0}:=\mathrm{sgn}\left(k\right)\eta_{k}=\mathrm{sgn}\left(\alpha\left(k\right)\right)\eta_{\delta\alpha\left(k\right)}\).  Writing \(\eta_{k}\) and \(\eta_{\delta\alpha\left(k\right)}\) in terms of \(\eta_{0}\), \(\eta_{0}\) will appear in (\ref{formula: faces sign}) an odd number of times (i.e.\ once, as opposed to zero times or twice) exactly when \(\varepsilon\left(k\right)\neq\varepsilon\left(\alpha\left(k\right)\right)\).  It will appear an odd number of times in (\ref{formula: edges sign}) exactly when an odd number of \(-\left|k\right|\) and \(-\left|\alpha\left(k\right)\right|\) appear in \(\mathrm{FD}\left(\alpha\right)\), i.e.\ \(\mathrm{sgn}\left(k\right)\neq\mathrm{sgn}\left(\alpha\left(k\right)\right)\).  Thus it will appear an odd number of times in (\ref{formula: faces sign}) and (\ref{formula: edges sign}) when \(\mathrm{sgn}\left(\delta_{\varepsilon}\left(k\right)\right)\neq\mathrm{sgn}\left(\delta_{\varepsilon}\alpha\left(k\right)\right)\).

In (\ref{formula: vertices sign}), the indices containing \(\eta_{0}\) are the right and left \(\eta\) indices (respectively) of the matrices subscripted either \(\varphi_{\mathrm{Re}+}^{-1}\delta_{\varepsilon}\left(k\right)\) and \(\varphi_{\mathrm{Re}-}^{-1}\delta_{\varepsilon}\alpha\left(k\right)=\sigma_{\mathrm{Re}}^{-1}\varphi_{\mathrm{Re}+}^{-1}\delta_{\varepsilon}\left(k\right)\) or \(\delta\varphi_{\mathrm{Re}-}^{-1}\delta_{\varepsilon}\alpha\left(k\right)\) and \(\delta\varphi_{\mathrm{Re}+}^{-1}\delta_{\varepsilon}\left(k\right)=\sigma_{\mathrm{Re}}^{-1}\delta\varphi_{\mathrm{Re}-}^{-1}\delta_{\varepsilon}\alpha\left(k\right)\).  (The paired integers are in the same orbit of \(\sigma_{\mathrm{Re}}\) and the pairs are negatives of each other, so exactly one pair is contained in \(\mathrm{FD}\left(\sigma_{\mathrm{Re}}\right)\).)  This pair contributes an odd number of \(\eta_{0}\) to (\ref{formula: vertices sign}) exactly when \(\mathrm{sgn}\left(\varphi_{\mathrm{Re}+}^{-1}\delta_{\varepsilon}\left(k\right)\right)\neq\mathrm{sgn}\left(\varphi_{\mathrm{Re}-}^{-1}\delta_{\varepsilon}\alpha\left(k\right)\right)\) (or, equivalently, when their negatives are of opposite sign), that is, exactly when \(\mathrm{sgn}\left(\delta_{\varepsilon}\left(k\right)\right)\neq\mathrm{sgn}\left(\delta_{\varepsilon}\alpha\left(k\right)\right)\), as for (\ref{formula: faces sign}) and (\ref{formula: edges sign}).  Thus an even number of \(\eta_{0}\) appear in total, so their product is \(1\).

\paragraph{Value of the sign}

The product of sign indices in (\ref{formula: faces sign}) will have a term corresponding to \(\left|k\right|\in\left[n\right]\) if \(\varepsilon\left(k\right)=-1\).  Exactly one of \(k\) and \(-k\) is in \(\mathrm{FD}\left(\alpha\right)\) (call this one \(k\)) while \(-k=\delta\alpha\left(l\right)\) for some \(l\in\mathrm{FD}\left(\alpha\right)\).  When (\ref{formula: faces sign}) is written in terms of \(\eta_{0}\), \(\eta_{k}=\mathrm{sgn}\left(k\right)\eta_{0}\), and \(\eta_{-k}=\mathrm{sgn}\left(\alpha\left(l\right)\right)\eta_{0}=\mathrm{sgn}\left(k\right)\eta_{0}\), so the total contribution is a factor of \(1\).

Likewise, when (\ref{formula: edges sign}) is written in terms of \(\eta_{0}\), it will have a term corresponding to \(\left|k\right|\in\left[n\right]\) when the one of \(\left\{k,-k\right\}\) which is in \(\mathrm{FD}\left(\alpha\right)\) is negative (say \(-k\)).  Then \(k=\delta\alpha\left(l\right)\) for some \(l\in\mathrm{FD}\left(\alpha\right)\).  Then \(\eta_{-k}=-\eta_{0}\) and \(\eta_{k}=\mathrm{sgn}\left(\alpha\left(l\right)\right)\eta_{0}=-\eta_{0}\), so again the contribution is a factor of \(1\).

Finally, we consider the contribution of (\ref{formula: vertices sign}).  We calculate that if it is \(\varphi_{\mathrm{Re}+}^{-1}\delta_{\varepsilon}\left(k\right)\) and \(\varphi_{\mathrm{Re}-}^{-1}\delta_{\varepsilon}\alpha\left(k\right)\) which appear in \(\mathrm{FD}\left(\sigma_{\mathrm{Re}}\right)\) (the first pair mentioned in the previous section), then either's contribution to (\ref{formula: vertices sign}) will be \(\eta_{0}\), while if it is the second pair \(\delta\varphi_{\mathrm{Re}-}^{-1}\delta_{\varepsilon}\alpha\left(k\right)\) and \(\delta\varphi_{\mathrm{Re}+}^{-1}\delta_{\varepsilon}\left(k\right)\) which appear the contribution will be \(-\eta_{0}\).  Thus, we wish to consider \(k\in\mathrm{FD}\left(\alpha\right)\) for which \(\varphi_{\mathrm{Re}+}^{-1}\delta_{\varepsilon}\left(k\right)\notin\mathrm{FD}\left(\sigma_{\mathrm{Re}}\right)\); or, mapping the pair partition \(\delta\alpha\) under \(\varphi_{\mathrm{Re}+}^{-1}\delta_{\varepsilon}\) (to \(\delta\sigma_{\mathrm{Re}}^{-1}\)), pairs such that the distinguished element (the element of \(\mathrm{FD}\left(\alpha\right)\)) is not mapped to the distinguished element (the element of \(\mathrm{FD}\left(\sigma_{\mathrm{Re}}\right)\)).

By Corollary~\ref{corollary: parity}, the parity of the number of such pairs is the sign of a permutation mapping \(\delta\sigma_{\mathrm{Re}}^{-1}\) to itself, taking \(\varphi_{\mathrm{Re}+}^{-1}\delta_{\varepsilon}\left(\mathrm{FD}\left(\alpha\right)\right)\) to \(\mathrm{FD}\left(\sigma_{\mathrm{Re}}\right)\).  We may construct such a permutation in several steps.  Let \(\rho_{1}:=\varphi_{\mathrm{Re}+}\) (we calculate \(\mathrm{sgn}\left(\rho_{1}\right)=\left(-1\right)^{n-\#\left(\varphi\right)}\)), which maps \(\delta\sigma_{\mathrm{Re}}\) (distinguished elements \(\varphi_{\mathrm{Re}+}^{-1}\delta_{\varepsilon}\left(\mathrm{FD}\left(\alpha\right)\right)\)) to \(\delta\delta_{\varepsilon}\alpha\delta_{\varepsilon}\) (distinguished elements \(\delta_{\varepsilon}\left(\mathrm{FD}\left(\alpha\right)\right)\)).  Let \(\rho_{2}\) map \(\delta\delta_{\varepsilon}\alpha\delta_{\varepsilon}\) (same distinguished elements) to \(\delta\) (disinguished elements are positive integers); then \(\mathrm{sgn}\left(\rho_{2}\right)=\left(-1\right)^{\#\left(\alpha\right)/2+\left|\left[n\right]\cap\left(\delta_{\varepsilon}\left(\mathrm{FD}\left(\alpha\right)\right)\right)\right|}\).  Let \(\rho_{3}\) map \(\delta\) (same distinguished elements) to \(\delta\sigma_{\mathrm{Re}}^{-1}\) (distinguished elements are \(\mathrm{FD}\left(\sigma_{\mathrm{Re}}\right)\)); then \(\mathrm{sgn}\left(\rho_{3}\right)=\left(-1\right)^{\#\left(\sigma_{\mathrm{Re}}\right)/2+\left|\left[n\right]\cap\mathrm{FD}\left(\sigma_{\mathrm{Re}}\right)\right|}\).  Then the sign of their product is \(\left(-1\right)^{\chi\left(\varphi,\delta_{\varepsilon}\alpha\delta_{\varepsilon}\right)+\left|\left[n\right]\cap\left(\delta_{\varepsilon}\left(\mathrm{FD}\left(\alpha\right)\right)\right)\right|+\left|\left[n\right]\cap\mathrm{FD}\left(\sigma_{\mathrm{Re}}\right)\right|}\).

We must also keep track of the contribution of each pair.  If both or neither of \(\delta\varphi_{\mathrm{Re}-}^{-1}\delta_{\varepsilon}\alpha\left(k\right)\) and \(\delta\varphi_{\mathrm{Re}+}^{-1}\delta_{\varepsilon}\left(k\right)\) are negative (i.e.\ if the corresponding pair of \(\delta\sigma_{\mathrm{Re}}^{-1}\) have opposite signs) the pair contributes \(1\), so we must subtract the number of such pairs.  The parity of the number of pairs of \(\delta\sigma_{\mathrm{Re}}\) consisting of a positive and a negative integer whose distinguished element is switched by \(\rho\) will be the parity of \(\left|\left[n\right]\cap\left(\delta_{\varepsilon}\left(\mathrm{FD}\left(\alpha\right)\right)\right)\right|-\left|\left[n\right]\cap\mathrm{FD}\left(\sigma_{\mathrm{Re}}\right)\right|\).  The sign follows.
\end{proof}

We can adapt this expression to the expected value of a quaternion-valued expression, and similarly to a matrix-valued expression using the point at infinity introduced in Definition~\ref{definition: traces}:
\begin{proposition}
Let \(w:\left[n\right]\rightarrow\left[C\right]\), \(X_{1},\ldots,X_{n}\), and \(Y_{1},\ldots,Y_{n}\) be as in Proposition~\ref{proposition: topological expansion}.  Let \(\varphi_{\mathrm{Re}},\varphi_{\mathrm{tr}}\in S\left(\left[n\right]_{\infty}\right)\) and let \(\varepsilon:\left[n\right]\rightarrow\left\{1,-1\right\}\).  Then
\begin{multline*}
\mathbb{E}\left(\mathrm{Re}_{\varphi_{\mathrm{Re}}}\mathrm{tr}_{\varphi_{\mathrm{tr}}}\left(X_{1}Y_{1},\ldots,X_{n}Y_{n}\right)\right)
\\=\sum_{\substack{\alpha=\alpha_{1}\cdots\alpha_{n}\\\alpha_{c}\in\mathrm{PM}\left(w^{-1}\left(c\right)\right)}}\left(-2\right)^{\chi\left(\varphi_{\mathrm{Re}},\delta_{\varepsilon}\alpha\delta_{\varepsilon}\right)-2\#\left(\varphi_{\mathrm{Re}}\right)}N^{\chi\left(\varphi_{\mathrm{tr}},\delta_{\varepsilon}\alpha\delta_{\varepsilon}\right)-2\#\left(\varphi_{\mathrm{tr}}\right)}
\\\times f_{1}\left(\alpha_{1}\right)\cdots f_{n}\left(\alpha_{n}\right)\mathbb{E}\left(\mathrm{Re}_{\mathrm{FD}\left(K\left(\varphi_{\mathrm{Re}},\delta_{\varepsilon}\alpha\delta_{\varepsilon}\right)^{-1}\right)}\mathrm{tr}_{\mathrm{FD}\left(K\left(\varphi_{\mathrm{tr}},\delta_{\varepsilon}\alpha\delta_{\varepsilon}\right)^{-1}\right)}\left(Y_{1},\ldots,Y_{n}\right)\right)
\end{multline*}
(where \(\alpha\) is taken to act trivially on \(\pm\infty\), and its cycles are counted as a permutation on \(\pm\left[n\right]_{\infty}\)).
\end{proposition}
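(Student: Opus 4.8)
The plan is to deduce the matrix- and quaternion-valued identity from the scalar identity of Proposition~\ref{proposition: topological expansion} by closing the cycle through the point at infinity with a single deterministic test matrix. Concretely, I adjoin a new index $n+1$, assign it a fresh colour $C+1$ with $X_{n+1}$ the identity matrix, set $Y_{n+1}:=Z$ for an arbitrary deterministic quaternionic matrix $Z$, and replace the symbol $\infty$ by $n+1$ throughout $\varphi_{\mathrm{Re}}$ and $\varphi_{\mathrm{tr}}$ to obtain genuine permutations $\varphi'_{\mathrm{Re}},\varphi'_{\mathrm{tr}}\in S_{n+1}$. By Definition~\ref{definition: traces}, the scalar $\mathrm{Re}_{\varphi'_{\mathrm{Re}}}\mathrm{tr}_{\varphi'_{\mathrm{tr}}}\left(X_{1}Y_{1},\ldots,X_{n}Y_{n},Z\right)$ is precisely a contraction of the free (infinity-indexed) entries of $\mathrm{Re}_{\varphi_{\mathrm{Re}}}\mathrm{tr}_{\varphi_{\mathrm{tr}}}\left(X_{1}Y_{1},\ldots,X_{n}Y_{n}\right)$ against $Z$, the factor of $2^{-1}N^{-1}$ separating the two clauses of that definition being supplied by the extra cycle closed off by $Z$. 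Since $Z$ is arbitrary, proving the scalar identity for every $Z$ is equivalent to the claimed entry-wise identity.

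Next I apply Proposition~\ref{proposition: topological expansion} to this scalar on $\left[n+1\right]$. The factorisation $\alpha=\alpha_{1}\cdots\alpha_{C}\alpha_{C+1}$ now carries an extra factor $\alpha_{C+1}\in\mathrm{PM}\left(\left\{n+1\right\}\right)$; but this set contains only the identity, and for the identity matrix we recorded $f\left(\alpha\right)=1$ when $\alpha=e$ and $0$ otherwise, so $\alpha_{C+1}=e$ is forced and $n+1,-\left(n+1\right)$ appear as singleton cycles of $\alpha$. Under the identification $n+1\leftrightarrow\infty$ this is exactly the stipulation of the proposition that $\alpha$ act trivially on $\pm\infty$ while its cycles are counted on $\pm\left[n\right]_{\infty}$, and the sum over $\alpha$ in the scalar formula matches the target sum term by term.

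It then remains to verify that each ingredient is invariant under this relabelling. The exponents $\chi\left(\varphi'_{\bullet},\delta_{\varepsilon}\alpha\delta_{\varepsilon}\right)-2\#\left(\varphi'_{\bullet}\right)$ are unchanged because the Euler characteristic and the cycle counts depend only on the cycle structures of $\varphi'_{\bullet}$ and $\delta_{\varepsilon}\alpha\delta_{\varepsilon}$, which coincide on $\left[n+1\right]$ and on $\left[n\right]_{\infty}$ once $n+1$ is identified with $\infty$ and the singleton cycles of $\alpha_{C+1}=e$ are taken into account. Likewise $K\left(\varphi'_{\bullet},\delta_{\varepsilon}\alpha\delta_{\varepsilon}\right)^{-1}$ restricts, under the same identification, to $K\left(\varphi_{\bullet},\delta_{\varepsilon}\alpha\delta_{\varepsilon}\right)^{-1}$, so the $\mathrm{FD}$-data indexing the residual $Y$-expectation agree; and that residual expectation $\mathbb{E}\left(\mathrm{Re}\,\mathrm{tr}\left(Y_{1},\ldots,Y_{n},Z\right)\right)$ is, once more by Definition~\ref{definition: traces}, the same contraction against $Z$ of the matrix-valued residual expectation appearing in the target formula. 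Cancelling the common arbitrary $Z$ yields the stated identity.

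The step I expect to be the main obstacle is the bookkeeping at the interface between the two normalisation conventions: one must confirm that the difference between $\#\left(\varphi_{\bullet}\right)$ and $\#\left(\varphi_{\bullet}\right)-1$ in the two clauses of Definition~\ref{definition: traces} is reproduced exactly when the infinity cycle is closed by $Z$, with the matrix and quaternion indices contracted in the correct (transposed) order. One must also confirm that the sign contributions tracked through~(\ref{formula: faces sign}),~(\ref{formula: edges sign}), and~(\ref{formula: vertices sign}) in the proof of Proposition~\ref{proposition: topological expansion} receive no spurious contribution from the inert index: since $\alpha$ fixes $\pm\left(n+1\right)$, no auxiliary variable $\eta_{0}$ is ever attached to it and the singleton cycles never enter a transposition, so the sign analysis passes through verbatim; checking this non-interference carefully is the crux of the argument.
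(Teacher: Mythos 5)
Your proposal is correct, but it takes a genuinely different route from the paper. The paper offers no separate argument for this proposition: it is presented as an adaptation of Proposition~\ref{proposition: topological expansion}, the intended proof being to re-run the index-contraction and sign bookkeeping of that proof while carrying the uncontracted indices at \(\infty\) through unchanged. You instead deduce the matrix-valued statement from the scalar one as a formal corollary: you close the \(\infty\)-cycles with a deterministic test matrix \(Z\) in a fresh slot \(n+1\) of a fresh colour whose \(X\)-factor is the identity (extending \(\varepsilon\) by \(\varepsilon\left(n+1\right)=1\)), invoke the paper's own observation that the identity matrix satisfies the moment hypothesis with \(f\left(\alpha\right)=1\) exactly when \(\alpha=e\) (so \(\alpha\) is forced to fix \(\pm\left(n+1\right)\), reproducing precisely the proposition's convention that \(\alpha\) acts trivially on \(\pm\infty\) while its singleton cycles are still counted), and then strip off \(Z\). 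This is more modular: Proposition~\ref{proposition: topological expansion} is used as a black box, so no sign analysis is repeated, whereas the paper's implicit route re-proves rather than re-uses. For the same reason your closing worry is misplaced: in your argument the signs attached to (\ref{formula: faces sign}), (\ref{formula: edges sign}), and (\ref{formula: vertices sign}) are internal to the cited proposition and nothing about them needs rechecking; the real content of your proof is the dictionary between the two clauses of Definition~\ref{definition: traces}, which you do address, including the factor \(2^{-1}N^{-1}\).

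Two points should be nailed down to make this airtight. First, your test matrices \(Z\) range only over quaternionic matrices, not over all complex matrices of the appropriate size, so ``cancelling the arbitrary \(Z\)'' requires the observation that the entrywise pairing \(\left(A,Z\right)\mapsto\sum_{a,b;\alpha,\beta}A_{ab;\alpha\beta}Z_{ab;\alpha\beta}\) still separates points; this holds because both sides of the scalar identity are \(\mathbb{C}\)-linear in the entries of \(Z\) and the complex span of the quaternionic matrices is the full matrix space (\(\mathbb{H}\otimes_{\mathbb{R}}\mathbb{C}\cong M_{2}\left(\mathbb{C}\right)\)), so vanishing of the pairing against all quaternionic \(Z\) forces the difference of the two sides to vanish entrywise. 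Second, the identification \(n+1\leftrightarrow\infty\) is not literally compatible with the convention defining \(\mathrm{FD}\): the symbol \(\infty\) has priority in the choice of representative cycles, while \(n+1\), being the largest label, has the least, so for some \(\alpha\) the representative cycle through \(n+1\) in \(\mathrm{FD}\left(K\left(\varphi'_{\mathrm{Re}},\delta_{\varepsilon}\alpha\delta_{\varepsilon}\right)^{-1}\right)\) is the mirror of the cycle through \(\infty\) appearing in the target formula, and your claim that ``the \(\mathrm{FD}\)-data agree'' is then not literally true. One must either note that the expressions of Definition~\ref{definition: traces} are unchanged when a representative cycle is replaced by its mirror (using (\ref{formula: annoying sign}) and reindexing the summation variables on that cycle), or simply adopt the convention that \(n+1\) inherits the priority of \(\infty\). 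Neither point is a substantive obstacle, but both belong in a complete write-up.
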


\begin{remark}
In all of the matrix models we consider, \(\lim_{N\rightarrow\infty}f\left(\alpha\right)\) exists for all \(\alpha\).  The order of a term in \(N\) then depends only on the Euler characteristic of \(\varphi_{\mathrm{tr}}\) with \(\alpha\).  Typically, this means that highest order terms will be the ones where \(\alpha\) satisfies the appropriate noncrossing conditions on \(\varphi_{\mathrm{tr}}\).  Intuitively, this is because a sphere has the highest Euler characteristic, and the graph of edges on a sphere may be drawn on a plane, without crossings.  See Definition~\ref{definition: planar} for an example of a definition of noncrossing.  See also \cite{MR2052516, MR2036721} for more on the role of noncrossing diagrams in random matrix theory, and \cite{MR1813436, MR2851244, MR3217665, 2012arXiv1204.6211R} for more on noncrossing maps on unoriented surfaces.
\end{remark}

\begin{example}
Let \(Z_{1}\), \(Z_{2}\) be independent ensembles of Ginibre matrices, let \(W=\frac{1}{N}G^{\ast}DG\) be a Wishart matrix, let \(U\) be a Haar-distributed symplectic matrix, and let \(Y_{1},\ldots,Y_{10}\) be quaternionic random matrices.  Let all these ensembles be independent.  We consider the calculation of the expression
\[\mathbb{E}\left[U^{\ast}Y_{1}\mathrm{Re}\left(Z_{1}Y_{2}\right)\mathrm{tr}\left(UY_{3}WY_{4}\right)\mathrm{Re}\left(\mathrm{tr}\left(Y_{5}UY_{6}Z_{1}Y_{7}UY_{8}\right)\mathrm{tr}\left(Z_{2}Y_{9}Z_{2}^{\ast}Y_{10}\right)\right)\right]\textrm{.}\]
Numbering the matrices from left to right (and taking an implicit identity matrix directly before \(Y_{5}\), the permutations associated with \(\mathrm{Re}\) and \(\mathrm{tr}\) are
\[\varphi_{\mathrm{Re}}=\left(\infty,1,3,4\right)\left(2\right)\left(5,6,7,8,9,10\right)\]
and
\[\varphi_{\mathrm{tr}}=\left(\infty,1,2,3,4\right)\left(5,6,7,8\right)\left(9,10\right)\textrm{,}\]
and \(\varepsilon\left(k\right)\) is equal to \(1\) on \(2\), \(3\), \(4\), \(5\), \(6\), \(7\), \(8\), \(9\) and \(-1\) on \(1\) and \(10\).  We calculate the term associated with
\begin{multline*}
\alpha=\left(1,-6,8,-3\right)\left(3,-8,6,-1\right)\left(2,-7\right)\left(7,-2\right)\left(4\right)\left(-4\right)\left(5\right)\left(-5\right)\\\left(9,-10\right)\left(10,-9\right)\textrm{,}
\end{multline*}
which we can see from Sections~\ref{section: Gaussian} and \ref{section: Haar} is the product of permutations associated with nonzero values.  We calculate that the contribution of both of the Ginibre matrices is \(1\), the contribution of the Wishart matrix is \(\mathrm{Re}\left(\mathrm{tr}\left(D\right)\right)\), and (from \cite{MR2217291, MR2567222}) the contribution of the \(U\) is
\[\mathrm{wg}\left(\Lambda\left(1,-6,8,-3\right)\right)=\mathrm{wg}\left(\left[2\right]\right)=\frac{\left(2N\right)^{2}}{\left(2N+1\right)\left(2N-2\right)}\textrm{.}\]

We calculate
\begin{multline*}
K\left(\varphi_{\mathrm{Re}},\delta_{\varepsilon}\alpha\delta_{\varepsilon}\right)=\left(\infty,4,3\right)\left(-3,-4,-\infty\right)\\\left(1,5,10,8,-6,2,-7\right)\left(7,-2,6,-8,-10,-5,-1\right)\left(9\right)\left(-9\right)
\end{multline*}
and
\begin{multline*}
K\left(\varphi_{\mathrm{tr}},\delta_{\varepsilon}\alpha\delta_{\varepsilon}\right)=\left(\infty,4,3\right)\left(-3,-4,-\infty\right)\left(1,5,8,-6\right)\left(6,-8,-5,-1\right)\\\left(2,-7\right)\left(7,-2\right)\left(9\right)\left(-9\right)\left(10\right)\left(-10\right)
\end{multline*}
which gives us \(\chi\left(\varphi_{\mathrm{Re}},\delta_{\varepsilon}\alpha\delta_{\varepsilon}\right)=3+5+3-11=0\) and \(\chi\left(\varphi_{\mathrm{tr}},\delta_{\varepsilon}\alpha\delta_{\varepsilon}\right)=3+5+5-11=-2\).  Thus the total contribution of this term is
\[\frac{\mathrm{Re}\left(\mathrm{tr}\left(D\right)\right)\mathbb{E}\left[Y_{3}Y_{4}\mathrm{Re}\left(\mathrm{tr}\left(Y_{1}\mathrm{tr}\left(Y_{7}^{\ast}Y_{2}\right)Y_{6}^{\ast}Y_{8}\mathrm{tr}\left(Y_{10}\right)Y_{5}\right)\right)\mathrm{Re}\left(\mathrm{tr}\left(Y_{9}\right)\right)\right]}{2^{6}N^{8}\left(2N+1\right)\left(2N-2\right)}\]
(where we have used Algorithm~\ref{algorithm: lub}) to write the part of the expression with the \(Y_{k}\) as a product with \(\mathrm{Re}\) and \(\mathrm{tr}\) applied to subexpressions.  See Figure~\ref{figure: gluing} for the topological constructions associated with \(\mathrm{Re}\) and \(\mathrm{tr}\).

\begin{figure}
\centering
\input{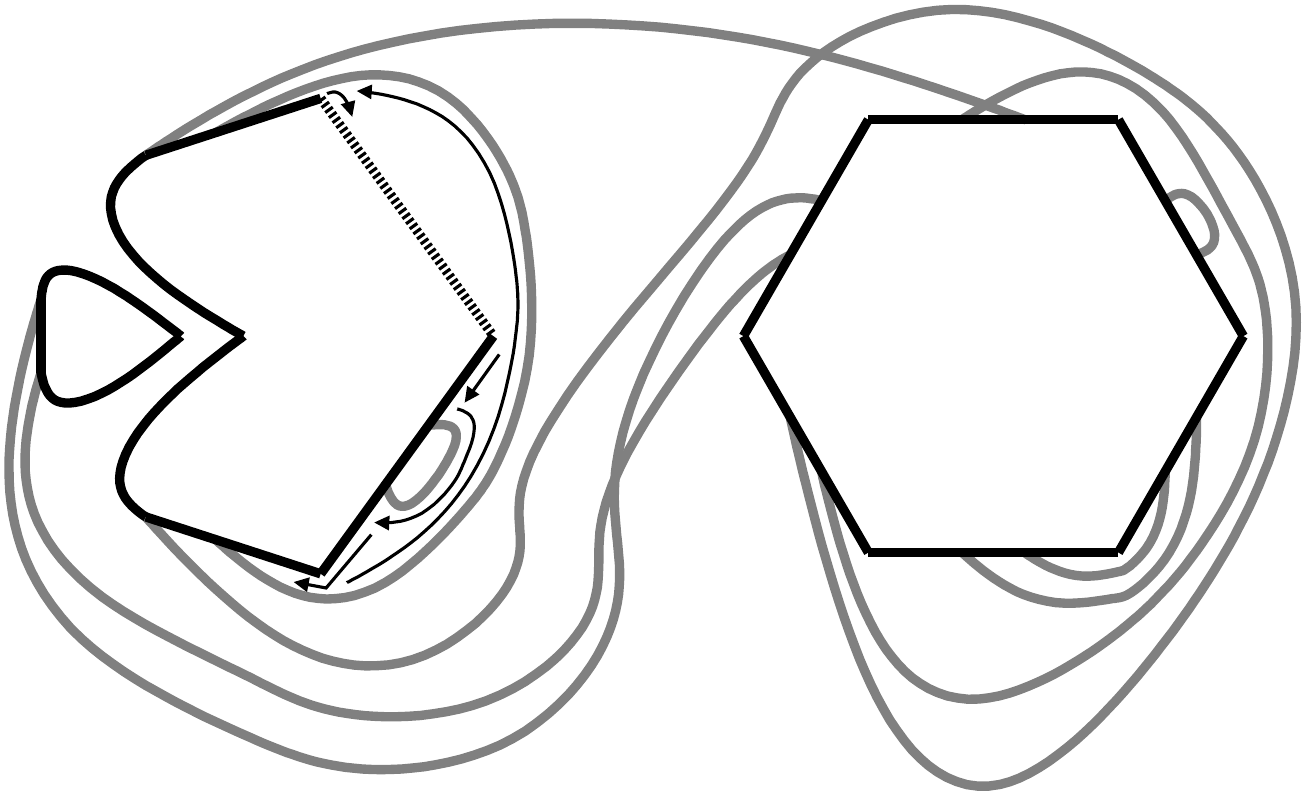_t}
\input{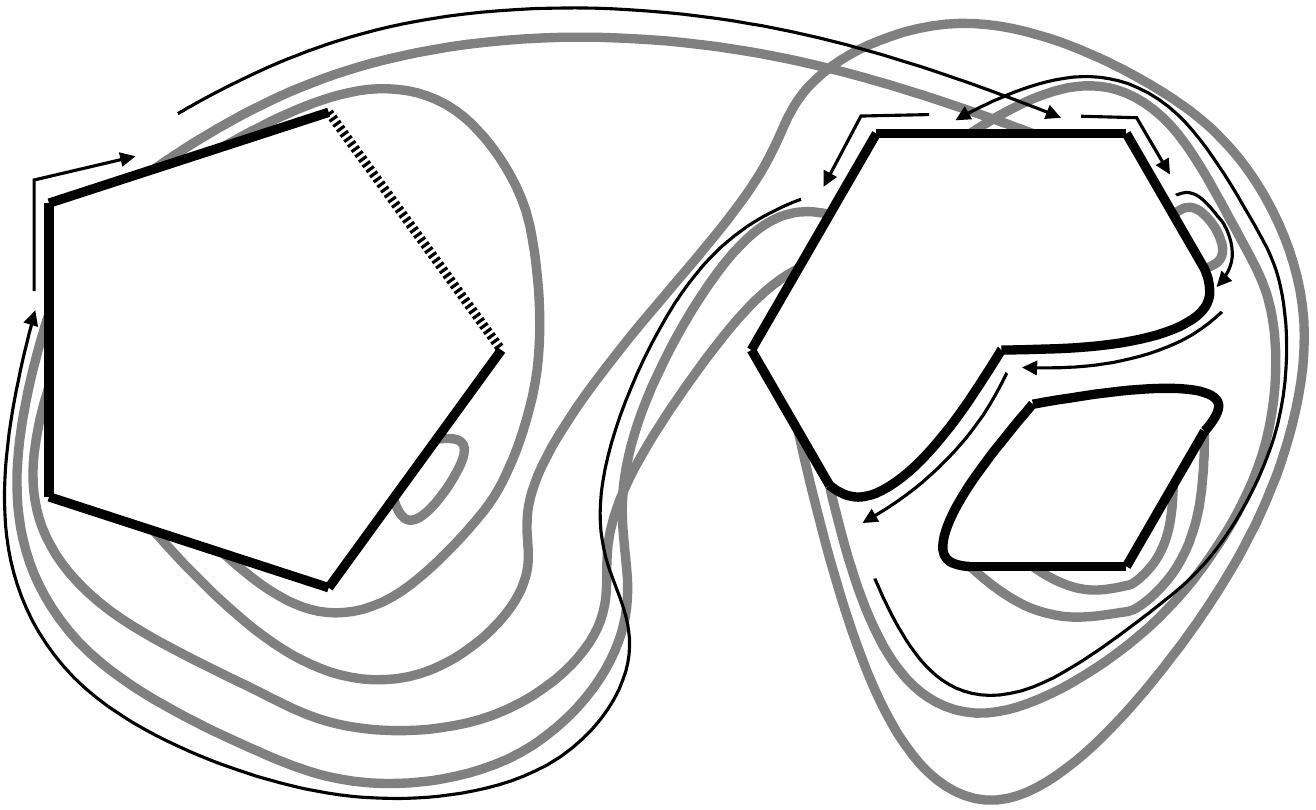_t}
\caption{The faces representing the cycles of the real part (above) and trace (below) in Example~\ref{example: topological expansion} with the same hyperedge identifications.  The vertex \(\left(\infty,4,3\right)\) is traced by the arrows (above).  The dashed line represents the point at infinity.  The grey lines may be taken to represent hyperedge identifications or interpreted as identifying only that half of the edge.  The vertex \(\left(1,4,5,-6\right)\) is shown with arrows (below).}
\label{figure: gluing}
\end{figure}

\label{example: topological expansion}
\end{example}

\section{Bracket Diagrams}
\label{section: bracket diagrams}

For \(A\in M_{N\times N}\left(\mathbb{H}\right)\), we may interpret \(\mathrm{Re}\left(A\right)\) as the entrywise real part, and \(\mathrm{tr}\left(A\right)\), a quaternion, as a quaternion multiple of the identity matrix \(\mathrm{tr}\left(A\right)I_{n}\).  It is then possible to write some expressions \(\mathrm{Re}_{\varphi_{\mathrm{Re}}}\mathrm{tr}_{\varphi_{\mathrm{tr}}}\left(A_{1},\ldots,A_{n}\right)\) as a product of matrices with the functions \(\mathrm{Re}\) and \(\mathrm{tr}\) applied to bracketed subexpressions, where the bracket diagram is legal, as in the introduction and in Example~\ref{example: topological expansion}.

\begin{definition}
We define a {\em bracket diagram} on symbols \(X_{1},\ldots,X_{n}\) as a legal placement of left and right brackets (i.e., scanning left to right, the number of right brackets never exceeds the number of left brackets) between the symbols in some ordering of the symbols.

We associate a permutation \(\pi\in S\left(\left[n\right]_{\infty}\right)\) to a bracket diagram by placing a term \(X_{\infty}\) before the first term and after the last.  Then, for \(k\in\left[n\right]_{\infty}\), we let \(\pi\left(k\right)\) be the subscript of the symbol after \(X_{k}\), ignoring any bracketed intervals (that is, if \(X_{k}\) is immediately followed by a left bracket, then \(\pi\left(k\right)\) is the subscript of the first symbol following the partnered right bracket.
\end{definition}

If pairs of brackets are associated to two different functions (in our case \(\mathrm{Re}\) and \(\mathrm{tr}\)), then we may also consider the permutation associated to the bracket diagram that includes only the brackets associated with a given function.  In Example~\ref{example: topological expansion}, the permutation associated with the bracket diagram is \(\left(\infty,1\right)\left(2\right)\left(3,4\right)\left(5,6,7,8\right)\left(9,10\right)\).  The permutation associated with \(\mathrm{Re}\) is \(\varphi_{\mathrm{Re}}\), and the permutation associated with \(\mathrm{tr}\) is \(\varphi_{\mathrm{tr}}\).

We note that, in such an expression, if \(\varphi_{\mathrm{Re}}\) is the permutation associated with \(\mathrm{Re}\) and \(\varphi_{\mathrm{tr}}\) is the permutation associated with \(\mathrm{tr}\), then it is equal to \(\mathrm{Re}_{\varphi_{\mathrm{Re}}}\mathrm{tr}_{\varphi_{\mathrm{tr}}}\left(X_{1},\ldots,X_{n}\right)\): a real part is of the form \(A\otimes\delta_{\eta_{1}\eta_{2}}\) for some matrix \(A\in M_{N\times N}\left(\mathbb{C}\right)\) (and respectively a trace is of the form \(\delta_{\iota_{1}\iota_{2}}\otimes q\) for some \(q\in\mathbb{H}\)), so the \(\eta\) indices (resp.\ \(\iota\) indices) of the matrices before and after, cyclically within \(\mathrm{Re}\) (resp.\ \(\mathrm{tr}\)) brackets are constrained as they would be if the bracketed expression were not present.

\begin{definition}
We define a metric on \(S\left(I\right)\) by constructing its Cayley graph with the transpositions as generators.  In this graph, the vertices are elements of \(S\left(I\right)\), and two permutations are joined by an edge if one may be obtained from the other by multiplying by a transposition.  The distance between \(\pi,\rho\in S_{n}\) is the same as the distance from the identity \(e\) to \(\pi^{-1}\rho\), which is \(\left|I\right|-\#\left(\pi^{-1}\rho\right)\).  A {\em geodesic} is a minimal length path between two given permutations.

We construct a poset \(\preceq\) on \(S\left(I\right)\) as follows.  We note that (left or right) multiplying a permutation \(\pi\in S\left(I\right)\) by an involution \(\left(a,b\right)\in S\left(I\right)\) joins the cycles of \(\pi\) containing \(a\) and \(b\) if they are in different cycles, or splits the cycle containing \(a\) and \(b\) if they are in the same cycle of \(\pi\).  For \(\pi,\rho\in S\left(I\right)\), \(\pi\preceq\rho\) if \(\rho\) may be constructed from \(\pi\) by successively joining cycles of \(\pi\) in this manner.
\end{definition}

\begin{lemma}
If \(\pi,\rho\in S\left(\left[n\right]_{\infty}\right)\), then \(\pi\preceq\rho\) if and only if we may construct a bracket diagram with permutation \(\pi\) by adding brackets to a bracket diagram for \(\rho\).
\label{lemma: brackets}
\end{lemma}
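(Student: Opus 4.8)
The plan is to pin down exactly how the permutation associated with a bracket diagram changes when a single matched pair of brackets is inserted, and then to observe that this elementary operation is precisely the inverse of the cycle-joining move used to define \(\preceq\). The whole lemma then follows by induction on the number of inserted brackets.

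First I would analyze the effect of inserting one pair of brackets. Recall that the permutation \(\rho\) of a bracket diagram decomposes into one cycle per bracket (including the outermost ``bracket'' delimited by the two copies of \(X_{\infty}\)): the cycle consists of the symbols lying directly inside that bracket, read left to right and closed up cyclically, with any properly nested sub-bracket skipped over. Suppose a new matched pair is inserted inside a bracket \(B\) whose directly-contained symbols, in left-to-right order, are \(s_{1},\ldots,s_{m}\) (so \(B\) contributes the cycle \(\left(s_{1}\,s_{2}\,\cdots\,s_{m}\right)\) to \(\rho\)). Because brackets must nest, the new pair encloses a left-to-right contiguous run of the contents of \(B\), and the directly-contained symbols it captures form a contiguous block \(s_{i},\ldots,s_{j}\) of that cycle (interspersed sub-brackets are invisible to the permutation). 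After insertion, \(B\) contributes \(\left(s_{1}\cdots s_{i-1}\,s_{j+1}\cdots s_{m}\right)\) and the new bracket contributes \(\left(s_{i}\cdots s_{j}\right)\), every other cycle being untouched. A direct check shows the new permutation equals \(\rho\left(s_{i-1},s_{j}\right)\), i.e.\ \(\rho\) right-multiplied by a transposition whose two entries lie in a common cycle of \(\rho\); this is exactly the move that splits that cycle.

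Next I would verify the converse at the level of a single step: every splitting of a cycle of \(\rho\) is realizable by inserting one legal pair of brackets. Given a transposition \(\left(a,b\right)\) with \(a,b\) in the same cycle of \(\rho\), that cycle is the set of symbols directly inside some bracket \(B\), and \(\left(a,b\right)\) cuts it into two cyclic arcs. Exactly one of these arcs avoids the cyclic ``seam'' of \(B\) --- for an inner bracket the seam is at its closing bracket, and for the outermost bracket the seam is at \(X_{\infty}\) --- and that arc is a genuine left-to-right contiguous block that can be surrounded by a new matched pair while preserving legality and nesting. This insertion produces precisely the split \(\rho\left(a,b\right)\). The cases where \(\infty\) is one of the split points, or where an arc reduces to a single symbol, are handled the same way, the point being that one never needs to enclose \(X_{\infty}\) itself.

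With the single-step correspondence in hand I would finish by induction, after noting that every \(\rho\in S\left(\left[n\right]_{\infty}\right)\) admits a bracket diagram (e.g.\ place the cycle through \(\infty\) at the top level and each remaining cycle as its own top-level bracket). Adding \(k\) brackets to a diagram for \(\rho\) (inserted one at a time, outermost first) transforms \(\rho\) through a chain \(\rho\to\rho\tau_{1}\to\cdots\to\rho\tau_{1}\cdots\tau_{k}=\pi\) in which each step splits a cycle; reversing the chain expresses \(\rho\) as \(\pi\) with cycles successively joined, which is exactly \(\pi\preceq\rho\). Conversely, if \(\pi\preceq\rho\) then \(\rho=\pi\sigma_{1}\cdots\sigma_{k}\) with each \(\sigma_{i}\) joining cycles, so \(\pi=\rho\sigma_{k}\cdots\sigma_{1}\) with each factor splitting a cycle of the current permutation; applying the single-step construction to a diagram for \(\rho\) realizes these splits by \(k\) bracket insertions and yields a diagram for \(\pi\). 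I expect the main obstacle to be the bookkeeping in the first two steps --- confirming that the captured symbols always form a contiguous arc despite interleaved sub-brackets, getting the transposition \(\left(s_{i-1},s_{j}\right)\) exactly right, and treating the seam at \(X_{\infty}\) correctly so that every admissible split (including those touching \(\infty\)) is legally realizable.
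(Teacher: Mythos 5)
Your proof is correct and takes essentially the same approach as the paper's: both rest on the single-step correspondence between inserting a matched bracket pair and right-multiplying by a transposition that splits a cycle (equivalently, ignoring a pair of added brackets joins two cycles), iterated until the diagram for \(\pi\) is reached. Your explicit identification of the transposition \(\left(s_{i-1},s_{j}\right)\) and your treatment of the seam at \(X_{\infty}\) simply spell out what the paper asserts ``by calculation,'' namely that at least one of the two arcs of a split cycle is a contiguous interval of the linear representation and so can be legally bracketed.
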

\begin{proof}
Multiplying \(\rho\) by transposition \(\left(a,b\right)\) where \(a\) and \(b\) are in the same cycle of \(\rho\) splits that cycle into two intervals (this can be shown by calculation).  The corresponding bracketed interval (ignoring any bracketed intervals within it) will contain at least one of those two intervals of the cycle as an interval of its linear representation, which may be marked off by brackets.  This does not change which brackets are paired with which, and the cycle containing \(a\) and \(b\) has been split in the way that multiplying by \(\left(a,b\right)\) does.  We may continue in this way until we have obtained a diagram for \(\pi\).

Conversely, if a bracket diagram for \(\pi\) is constructed by adding brackets to a bracket diagram for \(\rho\), then ignoring a pair of these brackets corresponds to joining two cycles by multiplying by the appropriate transposition.
\end{proof}

The following folklore lemma may be interpreted as an upper limit on the Euler characteristic.  See \cite{MR2052516} for a proof.
\begin{lemma}
If \(\pi,\rho\in S\left(I\right)\), then
\[\#\left(\pi\right)+\#\left(\rho\right)+\#\left(\pi\rho\right)\leq\left|I\right|+2\#\left(\Pi\left(\pi\right)\vee\Pi\left(\rho\right)\right)\]
\label{lemma: triangle}
\end{lemma}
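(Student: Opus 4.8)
The plan is to prove the inequality in its given (possibly disconnected) form directly, by induction on the transposition length \(\left|I\right|-\#\left(\rho\right)\) of \(\rho\), peeling \(\rho\) down to the identity one transposition at a time. It is convenient to track the ``defect''
\[G\left(\pi,\rho\right):=\left|I\right|+2\#\left(\Pi\left(\pi\right)\vee\Pi\left(\rho\right)\right)-\#\left(\pi\right)-\#\left(\rho\right)-\#\left(\pi\rho\right)\]
which is exactly twice the total genus of the rotation system associated with \(\left(\pi,\rho\right)\), and to show \(G\ge 0\). When \(\rho=e\) we have \(\pi\rho=\pi\) and \(\Pi\left(\pi\right)\vee\Pi\left(e\right)=\Pi\left(\pi\right)\) (joining with the partition into singletons does nothing), so \(G=\left|I\right|+2\#\left(\pi\right)-\#\left(\pi\right)-\left|I\right|-\#\left(\pi\right)=0\); this is the base case, and it already exhibits the inequality as sharp.

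For the inductive step I would fix an \(a\in I\) with \(\rho\left(a\right)\neq a\) (possible since \(\rho\neq e\)), set \(b:=\rho\left(a\right)\) and \(\tau:=\left(a,b\right)\), and pass to \(\rho':=\rho\tau\). Since \(a,b\) lie in the same cycle of \(\rho\), right multiplication by \(\tau\) splits that cycle, extracting \(b\) as a fixed point, so \(\#\left(\rho'\right)=\#\left(\rho\right)+1\) and \(\left|I\right|-\#\left(\rho'\right)<\left|I\right|-\#\left(\rho\right)\); by the inductive hypothesis \(G\left(\pi,\rho'\right)\ge 0\). Writing \(X:=\#\left(\Pi\left(\pi\right)\vee\Pi\left(\rho'\right)\right)-\#\left(\Pi\left(\pi\right)\vee\Pi\left(\rho\right)\right)\) and \(Y:=\#\left(\pi\rho'\right)-\#\left(\pi\rho\right)\), a short bookkeeping computation gives \(G\left(\pi,\rho\right)=G\left(\pi,\rho'\right)-2X+1+Y\). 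Here \(Y\in\left\{+1,-1\right\}\) because \(\pi\rho'=\left(\pi\rho\right)\tau\) differs from \(\pi\rho\) by one transposition. For \(X\), note that passing from \(\rho\) to \(\rho'\) only severs \(b\)'s adjacency in its \(\rho\)-cycle while leaving every other element of its block mutually connected, so the block can lose at most the single point \(b\); hence \(X\in\left\{0,1\right\}\), with \(X=1\) precisely when \(b\) is also fixed by \(\pi\), so that \(b\) becomes isolated in \(\Pi\left(\pi\right)\vee\Pi\left(\rho'\right)\).

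The crux, and the step I expect to be the only real obstacle, is ruling out the ``bad'' combination \(X=1,\ Y=-1\), which is the one case in which \(G\) could drop by \(2\); this is exactly where the interplay of the three permutations is felt. It is resolved by a forced correlation: if \(\pi\left(b\right)=b\), then \(\pi\rho\left(a\right)=\pi\left(b\right)=b\), so \(b=\pi\rho\left(a\right)\) and \(a,b\) are adjacent in a common cycle of \(\pi\rho\); thus \(\tau\) again splits a cycle of \(\pi\rho\), forcing \(Y=+1\). Hence \(X=1\Rightarrow Y=+1\), and checking the surviving cases (\(X=0\) with \(Y=\pm 1\), and \(X=1\) with \(Y=+1\)) all yield \(-2X+1+Y\ge 0\), so in fact \(G\left(\pi,\rho\right)\ge G\left(\pi,\rho'\right)\ge 0\). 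This closes the induction and gives the inequality. Conceptually the forced correlation is the statement that each connected component of the canonical orientable surface carrying the rotation system has nonnegative genus; an additive reduction to the blocks of \(\Pi\left(\pi\right)\vee\Pi\left(\rho\right)\) (on which \(\langle\pi,\rho\rangle\) acts transitively) and the connected genus bound \(\#\left(\pi\right)+\#\left(\rho\right)+\#\left(\pi\rho\right)\le\left|I\right|+2\) provides an alternative route, but the length induction above proves the general statement without this detour.
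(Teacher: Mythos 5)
Your overall strategy is sound and would be a genuinely self-contained argument (the paper itself gives no proof of this lemma, deferring to a reference): induction on the transposition length of \(\rho\), the bookkeeping identity \(G\left(\pi,\rho\right)=G\left(\pi,\rho'\right)-2X+1+Y\), and the reduction to excluding the single bad case \(X=1\), \(Y=-1\) are all correct. But your proof of that crux step has a genuine gap. You claim \(X=1\) holds \emph{precisely} when \(b\) is a fixed point of \(\pi\), and your ``forced correlation'' argument excludes the bad case only under that hypothesis. The claim is false: what is true is that \(X=1\) exactly when \(a\) and \(b\) lie in different blocks of \(\Pi\left(\pi\right)\vee\Pi\left(\rho'\right)\) (since \(\Pi\left(\pi\right)\vee\Pi\left(\rho\right)\) is obtained from \(\Pi\left(\pi\right)\vee\Pi\left(\rho'\right)\) by merging the blocks of \(a\) and \(b\), if distinct), and this can happen with \(\pi\left(b\right)\neq b\). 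Concretely, take \(I=\left\{1,2,3\right\}\), \(\rho=\left(1,2\right)\), \(\pi=\left(2,3\right)\), \(a=1\), \(b=2\), so \(\rho'=e\): then \(\Pi\left(\pi\right)\vee\Pi\left(\rho\right)=\left\{\left\{1,2,3\right\}\right\}\) has one block while \(\Pi\left(\pi\right)\vee\Pi\left(\rho'\right)=\left\{\left\{1\right\},\left\{2,3\right\}\right\}\) has two, so \(X=1\), yet \(\pi\left(b\right)=3\neq b\). Your argument says nothing in such cases, so the combination \(X=1\), \(Y=-1\) has not been excluded. (Relatedly, your assertion that the block ``can lose at most the single point \(b\)'' is also wrong---in this example the block containing \(a\) loses the two points \(2\) and \(3\)---although the conclusion \(X\in\left\{0,1\right\}\) is correct for the reason given above.)

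The gap is fixable, because the implication \(X=1\Rightarrow Y=+1\) you need is true; it requires a connectivity argument rather than a fixed-point argument. Suppose \(X=1\) and let \(B\) be the block of \(\Pi\left(\pi\right)\vee\Pi\left(\rho'\right)\) containing \(b\), so \(a\notin B\). Since \(B\) is a union of cycles of \(\pi\) and of \(\rho'\), it is invariant under \(\pi\rho'\). Now \(\pi\rho=\pi\rho'\tau\) agrees with \(\pi\rho'\) at every point except \(a\) and \(b\), and \(\pi\rho\left(a\right)=\pi\rho'\left(b\right)=\pi\left(b\right)\in B\). So the forward \(\pi\rho\)-orbit of \(a\) enters \(B\) immediately, and as long as it stays in \(B\setminus\left\{b\right\}\) each further step agrees with \(\pi\rho'\) and hence remains in \(B\); since the orbit must eventually return to \(a\notin B\), it must first pass through \(b\). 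Thus \(a\) and \(b\) lie in the same cycle of \(\pi\rho\), i.e.\ \(Y=+1\). Substituting this argument for your fixed-point claim closes the induction and makes the proof correct.
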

\begin{definition}
If \(\pi,\rho\in S\left(I\right)\), we say that \(\pi\) is {\em planar} on \(\rho\) if
\[\#\left(\pi\right)+\#\left(\rho\right)+\#\left(\pi\rho\right)-\left|I\right|=2\#\left(\Pi\left(\pi\right)\vee\Pi\left(\rho\right)\right)\textrm{.}\]
Intuitively, this is means that the Euler characteristic is \(2\) per connected component, that is, each component is a sphere.  (See \cite{MR1475837, MR0404045, MR1603700, MR2036721, MR2052516} for more on noncrossing permutations.  We use a slightly different convention than some of the references for which permutations are inverted, since we would like all elements---faces, hyperedges, and vertices---to be oriented counterclockwise.)

If \(\pi,\rho\in\mathrm{PM}\left(I\right)\), we say that \(\pi\) is planar on \(\rho\) if there is a \(J\subseteq\pm I\) such for all \(k\in I\) exactly one of \(k\) and \(-k\) is in \(J\), which is a union of cycles of \(\pi\) and \(\rho\), and such that \(\left.\pi\right|_{J}\) is planar on \(\left.\rho\right|_{J}\).  (Intuitively, the existence of such a \(J\) means that the hypermap is orientable, and the planarity condition means that its orientable two-sheeted covering space is a collection of spheres, which together imply that the original map is a collection of spheres.  See \cite{2012arXiv1204.6211R} for more on this construction.)
\label{definition: planar}
\end{definition}
The following lemma is from \cite{MR1475837}.  It shows that planarity is equivalent to not having crossings.
\begin{lemma}[Biane]
If \(\pi\) has a single cycle, then \(\rho\) is noncrossing on \(\pi\) exactly when \(\pi\) there are no \(a,b,c,d\in I\) such that \(\left.\pi\right|_{\left\{a,b,c,d\right\}}=\left(a,b,c,d\right)\) but \(\left.\rho\right|_{\left\{a,b,c,d\right\}}=\left(a,c\right)\left(b,d\right)\).
\label{lemma: noncrossing}
\end{lemma}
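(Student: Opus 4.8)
The plan is to turn the planarity condition into a statement about bracketed intervals and then feed it to the bracket-diagram machinery already in hand. Since \(\pi\) has a single cycle, \(\Pi(\pi)\vee\Pi(\rho)\) is the one-block partition, so \(\#(\Pi(\pi)\vee\Pi(\rho))=1\) and the planarity condition of Definition~\ref{definition: planar} is exactly the statement that Lemma~\ref{lemma: triangle} holds with equality, that is, \(\#(\rho)+\#(\pi\rho)=|I|+1\) rather than the inequality \(\le\). Via the Cayley-graph poset \(\preceq\) and Lemma~\ref{lemma: brackets}, this tightness is equivalent (up to the inversion built into the planar convention) to \(\rho\) lying below the single cycle \(\pi\), which means the cycles of \(\rho\) can be displayed as bracketed intervals of the linear word fixed by \(\pi\), with distinct cycles either disjoint or nested. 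The forbidden pattern \(\pi|_{\{a,b,c,d\}}=(a,b,c,d)\), \(\rho|_{\{a,b,c,d\}}=(a,c)(b,d)\) says precisely that two points of one cycle of \(\rho\) separate two points of a second cycle around the cyclic order fixed by \(\pi\). Thus the lemma reduces to the claim that such a bracketing exists if and only if no such separating quadruple occurs.

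First I would dispatch the forward implication (planarity implies no crossing) by contraposition, directly from the bracket picture. If \(\rho\) is planar, fix a legal bracket diagram realizing it; two interleaving elements \(a,c\) of one cycle and \(b,d\) of another, appearing in the cyclic order \(a,b,c,d\), would force their two bracketed intervals to overlap without nesting, which contradicts legality (scanning left to right, the number of right brackets would exceed the number of left brackets). Hence no crossing quadruple can occur. Equivalently one may restrict attention to \(J=\{a,b,c,d\}\): as permutations on \(J\) the pair \((a,b,c,d)\) and \((a,c)(b,d)\) multiply to a single \(4\)-cycle, so this four-point configuration already fails the equality in Lemma~\ref{lemma: triangle} (it is the standard genus-one crossing), and one argues that this defect cannot be cancelled by the remaining points. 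The bracket argument is the cleaner route, since it sidesteps the need to track how induced permutations interact with products.

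The substance of the proof is the converse, no crossing implies planarity, which I would establish by induction on \(|I|\). Assuming no crossing quadruple, the \(\pi\)-consecutive points that are not separated by any cycle of \(\rho\) organize the cycles into nested and disjoint intervals; in particular there is an innermost interval of \(\pi\)-consecutive points that is a union of cycles of \(\rho\) with nothing interleaving it. I would bracket this interval off, delete its points (passing to the induced permutations on the complement, where \(\pi\) is still a single cycle), verify that the no-crossing hypothesis is inherited, and apply the inductive hypothesis to bracket the remainder; reassembling yields a legal bracket diagram for \(\rho\), hence planarity. The main obstacle is exactly this peeling step: one must deduce from the absence of a crossing quadruple that an innermost interval exists, and check that removing it is a genuine geodesic move, so that the equality in Lemma~\ref{lemma: triangle} is preserved and not merely the inequality. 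The delicate point is the orientation of a cycle within the cyclic order of \(\pi\), since a single misoriented cycle can exhibit no four-point crossing yet still break planarity; the inductive step must therefore use that the relevant \(\rho\) are rigidly controlled by the single-cycle order of \(\pi\), and, in the applications of this paper, arise from pairings in \(\mathrm{PM}\), for which planarity of the permutation and non-crossing of the underlying partition coincide and the orientation ambiguity disappears.
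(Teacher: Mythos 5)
The paper never proves Lemma~\ref{lemma: noncrossing} at all; it is imported directly from Biane's paper \cite{MR1475837}, so your attempt can only be judged on its own terms, and on those terms the converse half cannot be completed. The four-point pattern is orientation-blind: \(\left.\rho\right|_{\left\{a,b,c,d\right\}}=\left(a,c\right)\left(b,d\right)\) holds precisely when \(a,c\) lie in one cycle of \(\rho\) and \(b,d\) in a different one, because the permutation induced by a cycle passing through exactly two marked points is the transposition of those points no matter which way the cycle runs. Hence the hypothesis ``no crossing quadruple'' constrains only the partition \(\Pi\left(\rho\right)\), while planarity in the sense of Definition~\ref{definition: planar} is sensitive to the direction in which each cycle of \(\rho\) traverses the cyclic order of \(\pi\). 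Concretely, take \(\rho=\pi=\left(1,2,3\right)\): no crossing quadruple exists (and none exists for \(\rho=\pi\) at any \(n\), since every induced \(\left.\rho\right|_{\left\{a,b,c,d\right\}}\) is then a \(4\)-cycle), yet \(\#\left(\rho\right)+\#\left(\pi\right)+\#\left(\rho\pi\right)-\left|I\right|=1+1+1-3=0\neq 2\), so \(\rho\) is not planar on \(\pi\); under the opposite reading of ``noncrossing on'' (product with \(\pi^{-1}\)) the counterexample is instead \(\rho=\pi^{-1}\). You notice exactly this defect yourself (``a single misoriented cycle can exhibit no four-point crossing yet still break planarity''), but your repair --- restricting to \(\rho\) arising from pairings in \(\mathrm{PM}\) ``in the applications of this paper'' --- proves a different, weaker statement, not the lemma, which is asserted for arbitrary \(\rho\in S\left(I\right)\) and is invoked in the proof of Lemma~\ref{lemma: upper bound} for the general permutations \(\varphi_{\mathrm{Re}},\varphi_{\mathrm{tr}}\). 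Biane's actual theorem adds the missing clause: the cycles of \(\rho\) must form a noncrossing partition \emph{and} each cycle must be traversed compatibly with the cyclic order. Your peeling induction stalls at precisely this point, since bracketing off an ``innermost interval'' is a geodesic move only when that cycle has the correct orientation, and nothing in the no-crossing hypothesis supplies it.

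The forward implication is also problematic as routed. ``If \(\rho\) is planar, fix a legal bracket diagram realizing it'' appeals to Lemma~\ref{lemma: upper bound} (condition~4 implies conditions~1--3, together with Lemma~\ref{lemma: brackets}); but in the paper that implication is established via Algorithm~\ref{algorithm: lub}, whose justification cites Lemma~\ref{lemma: noncrossing} itself, so within the paper's deductive structure your argument is circular. Your fallback --- restrict to \(J=\left\{a,b,c,d\right\}\), note that the pattern \(\left(a,b,c,d\right)\), \(\left(a,c\right)\left(b,d\right)\) already violates the equality of Lemma~\ref{lemma: triangle}, and argue that ``this defect cannot be cancelled by the remaining points'' --- is the right direct route, and it is the only direction the paper actually needs; but the quoted step is the entire substance of that direction (monotonicity of the planarity defect under passing to induced permutations), and you leave it unproven. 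So neither direction is complete: the forward one is fixable by proving that monotonicity statement; the converse is not fixable at all without adding the orientation condition to the statement being proved.
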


We present several equivalent conditions that allow two bracket diagrams to be constructed on the same symbols:
\begin{lemma}
Let \(\pi,\rho\in S\left(I\right)\).  Then the following are equivalent:
\begin{enumerate}
	\item \label{item: upper bound} \(\pi\) and \(\rho\) have an upper bound;
	\item \label{item: least upper bound} \(\pi\) and \(\rho\) have an upper bound \(\sigma\) such that \(\Pi\left(\pi\right)\vee\Pi\left(\rho\right)=\Pi\left(\sigma\right)\);
	\item \label{item: geodesic} \(\pi\) and \(\rho\) have an upper bound lying on a geodesic between \(\pi\) and \(\rho\);
	\item \label {item: noncrossing} \(\pi\) is planar on \(\rho^{-1}\).
\end{enumerate}
\label{lemma: upper bound}
\end{lemma}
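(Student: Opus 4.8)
The plan is to collapse all four conditions onto a single numerical identity and then isolate the genuinely combinatorial content. Write $\omega:=\Pi(\pi)\vee\Pi(\rho)$. First I would record how the poset $\preceq$ interacts with the metric $d$: a step in the poset joins two cycles, lowering the cycle count by exactly one, and since any transposition changes a permutation's cycle count by $\pm1$, a chain of joins from $\pi$ up to a common upper bound $\sigma$ has length exactly $\#(\pi)-\#(\sigma)$ and is therefore a $d$-geodesic. Hence for any common upper bound $\sigma$,
\[ d(\pi,\sigma)+d(\sigma,\rho)=\#(\pi)+\#(\rho)-2\#(\sigma), \]
and because joins only coarsen orbits we always have $\Pi(\sigma)\succeq\omega$, so $\#(\sigma)\le\#(\omega)$. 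Next I would extract the numerical meaning of condition (\ref{item: noncrossing}): applying Lemma~\ref{lemma: triangle} to $\pi^{-1}$ and $\rho$, and using $\#(\pi^{-1})=\#(\pi)$, $\Pi(\pi^{-1})=\Pi(\pi)$, and $\#(\pi^{-1}\rho)=\#(\pi\rho^{-1})$ (the two are conjugate), gives the universal lower bound $d(\pi,\rho)\ge\#(\pi)+\#(\rho)-2\#(\omega)$, with equality exactly when $\pi$ is planar on $\rho^{-1}$. This equality is the engine of the whole lemma.

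With these in hand the routine implications fall out. Given (\ref{item: least upper bound}) — a bound $\sigma$ with $\#(\sigma)=\#(\omega)$ — the displayed additivity identity together with the metric triangle inequality and the universal lower bound force
\[ d(\pi,\rho)=\#(\pi)+\#(\rho)-2\#(\omega), \]
which simultaneously certifies that $\sigma$ lies on a geodesic (condition (\ref{item: geodesic})) and that equality holds in the genus bound (condition (\ref{item: noncrossing})). The implications (\ref{item: least upper bound})$\Rightarrow$(\ref{item: upper bound}) and (\ref{item: geodesic})$\Rightarrow$(\ref{item: upper bound}) are immediate. What remains, to close the loop, is to manufacture from either a bare upper bound or from planarity an upper bound whose orbit partition is exactly $\omega$.

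This construction of a \emph{tight} (least) upper bound is the substantive step, and I would run it through bracket diagrams (Lemma~\ref{lemma: brackets}). Given any common upper bound $\sigma$, I would realize $\pi$ and $\rho$ as bracketings refining a common linear arrangement of $\sigma$, and then form the common upper bound $\tau$ whose bracket family is the intersection of those of $\pi$ and $\rho$. The intersection of two laminar (nested, noncrossing) bracket families is again a legal bracket diagram, and its partition is the finest one coarser than both $\Pi(\pi)$ and $\Pi(\rho)$, namely $\omega$; this gives (\ref{item: upper bound})$\Rightarrow$(\ref{item: least upper bound}). Equivalently, starting from planarity I would invoke Biane's criterion (Lemma~\ref{lemma: noncrossing}) to certify that $\pi$ and $\rho$ exhibit no crossing and translate the resulting nesting directly into a common bracket diagram, yielding (\ref{item: noncrossing})$\Rightarrow$(\ref{item: upper bound}).

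The main obstacle, I expect, lies in two points that the laminar structure must be made to deliver: (i) that a single upper bound $\sigma$ really lets one bracket \emph{both} $\pi$ and $\rho$ over \emph{one} linear arrangement — since a permutation admits several bracket diagrams, the arrangements produced for $\pi$ and for $\rho$ must be reconciled; and (ii) that the intersected bracket family realizes the join $\omega$ rather than something strictly coarser. I anticipate handling both by a maximality argument: among all common upper bounds choose one maximizing $\#(\sigma)$, and if some cycle of $\sigma$ still merges two distinct $\omega$-blocks, use the noncrossing criterion of Lemma~\ref{lemma: noncrossing} to locate a transposition splitting that cycle while keeping the result above both $\pi$ and $\rho$, contradicting maximality. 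Verifying that such a splitting point always exists (equivalently, that two laminar refinements of a common interval cannot ``cross'') is where the real work of the proof concentrates.
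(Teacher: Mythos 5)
Your numerical skeleton is correct and is essentially the paper's: for any common upper bound \(\sigma\) one has \(d(\pi,\sigma)+d(\sigma,\rho)=\#(\pi)+\#(\rho)-2\#(\sigma)\), Lemma~\ref{lemma: triangle} gives the lower bound \(d(\pi,\rho)\geq\#(\pi)+\#(\rho)-2\#\left(\Pi(\pi)\vee\Pi(\rho)\right)\) whose case of equality is condition~\ref{item: noncrossing}, and from these, condition~\ref{item: least upper bound} forces equality and hence conditions~\ref{item: geodesic} and~\ref{item: noncrossing}. The gaps are in the two substantive implications, and both are genuine. For \ref{item: upper bound}\(\Rightarrow\)\ref{item: least upper bound}, your claim that intersecting the bracket families of \(\pi\) and \(\rho\) over a common arrangement of \(\sigma\) yields an upper bound with partition \(\Pi(\pi)\vee\Pi(\rho)\) is false as stated, because bracket representations are not unique and the intersection depends on the choice. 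Take \(\sigma=(1,2,3,4)\), \(\pi=(1,2)(3)(4)\), \(\rho=(1,2,3)(4)\). Represent \(\rho\) by the diagram \(1\;2\;3\;(4)\), and represent \(\pi\) by the legal nested diagram \(1\;2\;((3)\;4)\) (a bracket around the segment \(3\,4\) with a bracket around \(3\) inside it). The two families share no bracket, so your \(\tau\) is \(\sigma\) itself, whose partition \(\left\{\left\{1,2,3,4\right\}\right\}\) is strictly coarser than \(\Pi(\pi)\vee\Pi(\rho)=\left\{\left\{1,2,3\right\},\left\{4\right\}\right\}\); with the other representation \(1\;2\;(3)\;(4)\) of \(\pi\) the intersection does work, which is exactly the representation-dependence. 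Your proposed repair (maximize \(\#(\sigma)\) and split any cycle merging two blocks of the join) is plausible, but it is precisely the step you defer, so the implication is not proved. The paper's construction avoids the issue entirely: it sets \(\tau:=\left.\sigma\right|_{V_{1}}\cdots\left.\sigma\right|_{V_{k}}\), the product of the restrictions of \(\sigma\) to the blocks \(V_{1},\ldots,V_{k}\) of \(\Pi(\pi)\vee\Pi(\rho)\); then \(\Pi(\tau)\preceq\Pi(\pi)\vee\Pi(\rho)\) by construction, and \(\tau\succeq\pi,\rho\) because deleting the elements outside a block from a bracket diagram of \(\pi\) (or \(\rho\)) on \(\sigma\) leaves every bracket paired with the same partner.

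The second gap is \ref{item: noncrossing}\(\Rightarrow\)\ref{item: upper bound}. Lemma~\ref{lemma: noncrossing} detects crossings of one permutation relative to a \emph{given} single cycle; in this implication there is no ambient cyclic order yet, and producing one is the whole problem. From planarity of \(\pi\) on \(\rho^{-1}\) alone you must construct a linear arrangement on which \(\pi\) and \(\rho\) are simultaneously realizable as bracket diagrams; ``translate the resulting nesting directly into a common bracket diagram'' has no construction behind it. This is what the paper's Algorithm~\ref{algorithm: lub}, adapted from \cite{MR0404045}, supplies --- the proof that both permutations are planar on its output is cited to that reference --- and Lemma~\ref{lemma: noncrossing} enters only afterwards, to check that the brackets placed on the algorithm's output are properly nested. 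Since your argument establishes neither \ref{item: upper bound}\(\Rightarrow\)\ref{item: least upper bound} nor \ref{item: noncrossing}\(\Rightarrow\)\ref{item: upper bound}, conditions~\ref{item: upper bound} and~\ref{item: noncrossing} remain disconnected from the others, and the equivalence is not established.
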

\begin{proof}
To show \(\textrm{\ref{item: upper bound}}\Rightarrow\textrm{\ref{item: least upper bound}}\), we construct a permutation \(\tau\) by taking the product \(\left.\sigma\right|_{V_{1}}\cdots\left.\sigma\right|_{V_{k}}\) where \(\Pi\left(\pi\right)\vee\Pi\left(\rho\right)=\left\{V_{1},\ldots,V_{k}\right\}\).  We allow \(V\in\Pi\left(\pi\right)\vee\Pi\left(\rho\right)\) to inherit the brackets from \(\pi\) or \(\rho\) as well.  (To be precise, we consider the bracket diagram of \(\pi\) or \(\rho\) on \(\sigma\), then delete elements not in \(V\).  This may leave many trivial pairs of brackets, but the partner of each bracket will not change.)  Then any cycle of \(\pi\) or \(\rho\) appearing in this block of \(\tau\) is completely contained in this block, and is therefore a cycle of the permutation of the bracket diagram on this block.  If we combine all the blocks, we have a bracket diagram of \(\pi\) or \(\rho\) on \(\tau\).  By construction \(\tau\preceq\Pi\left(\pi\right)\vee\Pi\left(\rho\right)\), and since \(\tau\succeq\pi,\rho\), we know \(\tau\succeq\Pi\left(\pi\right)\vee\Pi\left(\rho\right)\).

To show \(\textrm{\ref{item: least upper bound}}\Rightarrow\textrm{\ref{item: geodesic}}\), we represent \(\pi\) and \(\rho\) as bracket diagrams on \(\sigma\).  We construct a path from \(\pi\) (resp.\ \(\rho\) )to \(\sigma\) by removing brackets (joining cycles) from the diagram of \(\pi\) (resp.\ \(\rho\)) on \(\sigma\) until we have \(\sigma\).  The length of the first path is \(\#\left(\pi\right)-\#\left(\sigma\right)\) and the length of the second is \(\#\left(\rho\right)-\#\left(\sigma\right)\).  The paths together form a geodesic, since the distance from \(\pi\) to \(\rho\) is \(\left|I\right|-\#\left(\rho\pi^{-1}\right)\), which by Lemma~\ref{lemma: triangle} is greater than or equal to \(\#\left(\pi\right)+\#\left(\rho\right)-2\#\left(\Pi\left(\pi\right)\vee\Pi\left(\rho\right)\right)=\#\left(\pi\right)+\#\left(\rho\right)-2\#\left(\sigma\right)\).

For \(\textrm{\ref{item: geodesic}}\Rightarrow\textrm{\ref{item: noncrossing}}\), let \(\tau\) be an upper bound on a geodesic.  The length of the geodesic is \(n-\#\left(\rho\pi^{-1}\right)=\left[\#\left(\pi\right)-\#\left(\tau\right)\right]+\left[\#\left(\rho\right)-\#\left(\tau\right)\right]\), so \(\#\left(\pi\right)+\#\left(\rho\right)+\#\left(\rho\pi^{-1}\right)-n=2\#\left(\tau\right)\), so \(\pi\) is planar on \(\rho^{-1}\).

For \(\textrm{\ref{item: noncrossing}}\Rightarrow\textrm{\ref{item: upper bound}}\), we use Algorithm~\ref{algorithm: lub} to construct a permutation \(\sigma\) such that \(\pi\) and \(\rho\) are both planar on \(\sigma^{-1}\) and such that \(\Pi\left(\sigma\right)\succeq\Pi\left(\pi\right),\Pi\left(\rho\right)\).  Then we may construct a bracket diagram for \(\pi\) (resp.\ \(\rho\)) on each cycle of \(\sigma\) by placing a bracket before the first element of any cycle of \(\pi\) and after the last element.  These brackets remain paired in this way: if brackets were not properly nested, then the cycles of \(\pi\) (resp.\ \(\rho\)) must exhibit the crossing pattern in Lemma~\ref{lemma: noncrossing}.
\end{proof}

In addition, it is also necessary that the two bracket diagrams can be written simultaneously.

\begin{proposition}
For \(\varphi_{\mathrm{Re}},\varphi_{\mathrm{tr}}\in S\left(\left[n\right]_{\infty}\right)\), there is a bracket diagram equal to \(\mathrm{Re}_{\varphi_{\mathrm{Re}}}\mathrm{tr}_{\varphi_{\mathrm{tr}}}\left(X_{1},\ldots,X_{n}\right)\) if and only if \(\varphi_{\mathrm{Re}}\) and \(\varphi_{\mathrm{tr}}\) satisfy Lemma~\ref{lemma: upper bound} and
\begin{multline}
\#\left(\Pi\left(\varphi_{\mathrm{Re}}\right)\wedge\Pi\left(\varphi_{\mathrm{tr}}\right)\right)-\#\left(\Pi\left(\varphi_{\mathrm{Re}}\right)\vee\Pi\left(\varphi_{\mathrm{tr}}\right)\right)
\\=\left(\#\left(\Pi\left(\varphi_{\mathrm{Re}}\right)\wedge\Pi\left(\varphi_{\mathrm{tr}}\right)\right)-\#\left(\varphi_{\mathrm{Re}}\right)\right)\\+\left(\#\left(\Pi\left(\varphi_{\mathrm{Re}}\right)\wedge\Pi\left(\varphi_{\mathrm{tr}}\right)\right)-\#\left(\varphi_{\mathrm{tr}}\right)\right)\textrm{.}
\label{formula: glb}
\end{multline}

If \(\varphi_{\mathrm{Re}},\varphi_{\mathrm{tr}}\in\mathrm{PM}\left(\left[n\right]_{\infty}\right)\), then it is possible to find a bracket diagram with functions \(\mathrm{Re}\) and \(\mathrm{tr}\) applied to bracketed intervals where \(\varphi_{\mathrm{Re}}\) and \(\varphi_{\mathrm{tr}}\) are the permutations associated to the functions \(\mathrm{Re}\) and \(\mathrm{tr}\) respectively exactly when \(\varphi_{\mathrm{Re}}\) is planar on \(\varphi_{\mathrm{tr}}\) and they satisfy (\ref{formula: glb}).
\label{proposition: bracket diagram}
\end{proposition}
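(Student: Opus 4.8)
The plan is to use Lemma~\ref{lemma: brackets} as a dictionary turning ``bracket diagram'' into statements about the poset \(\preceq\), and then to split the desired equivalence into two independent requirements: that the two bracket families can be drawn on one common linear order, and that they can be drawn \emph{simultaneously} without a bracket of one type crossing a bracket of the other. A combined diagram lives on a single linear arrangement of \(X_\infty,X_{?},\ldots,X_{?},X_\infty\), whose associated permutation \(\lambda\) is a single cycle on \(\left[n\right]_\infty\). Reading off only the \(\mathrm{Re}\)-brackets recovers \(\varphi_{\mathrm{Re}}\), and only the \(\mathrm{tr}\)-brackets recovers \(\varphi_{\mathrm{tr}}\), each as a diagram obtained by adding brackets to the empty diagram of \(\lambda\); by Lemma~\ref{lemma: brackets} this forces \(\varphi_{\mathrm{Re}},\varphi_{\mathrm{tr}}\preceq\lambda\). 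Thus the mere existence of a common linear order is the existence of a common upper bound, which is exactly the content of Lemma~\ref{lemma: upper bound}. It remains to identify condition (\ref{formula: glb}) with simultaneous nestability.

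The key observation is that (\ref{formula: glb}) rearranges to \[\#\left(\Pi\left(\varphi_{\mathrm{Re}}\right)\vee\Pi\left(\varphi_{\mathrm{tr}}\right)\right)+\#\left(\Pi\left(\varphi_{\mathrm{Re}}\right)\wedge\Pi\left(\varphi_{\mathrm{tr}}\right)\right)=\#\left(\varphi_{\mathrm{Re}}\right)+\#\left(\varphi_{\mathrm{tr}}\right)\textrm{.}\] Viewing the blocks of \(\varphi_{\mathrm{Re}}\) and of \(\varphi_{\mathrm{tr}}\) as the two colour classes of a bipartite graph whose edges are the blocks of the meet, the two sides of this equation count, respectively, the vertices and the components-plus-edges of the graph; by Euler's relation the equality holds precisely when this graph is a forest, i.e.\ when the blocks of the two partitions interlock acyclically. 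I will take this as the combinatorial meaning of (\ref{formula: glb}) and show it is equivalent to nestability. Two intervals on a line can be forced to overlap without nesting only through a cyclic chain of overlaps, so a cycle in the block-intersection graph produces a pair consisting of an \(\mathrm{Re}\)-bracketed interval and a \(\mathrm{tr}\)-bracketed interval that cross, which is illegal; conversely a forest can be processed component by component and, within each component, tree-wise from the leaves inward, placing the brackets of the two types in nested order.

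For the forward direction I would take a combined diagram, restrict to each family to obtain \(\varphi_{\mathrm{Re}},\varphi_{\mathrm{tr}}\preceq\lambda\) (hence Lemma~\ref{lemma: upper bound}), and then count: tracking how the number of cycles of the combined permutation \(\sigma\) (all brackets added) is determined by \(\#\left(\Pi\left(\varphi_{\mathrm{Re}}\right)\wedge\Pi\left(\varphi_{\mathrm{tr}}\right)\right)\), \(\#\left(\Pi\left(\varphi_{\mathrm{Re}}\right)\vee\Pi\left(\varphi_{\mathrm{tr}}\right)\right)\) and the single top-level group forced by \(\lambda\) being one cycle, the legality of the nesting yields exactly the forest equality (\ref{formula: glb}). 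For the converse I would build the diagram: assuming Lemma~\ref{lemma: upper bound}, Algorithm~\ref{algorithm: lub} produces an upper bound \(\sigma\) with \(\Pi\left(\sigma\right)=\Pi\left(\varphi_{\mathrm{Re}}\right)\vee\Pi\left(\varphi_{\mathrm{tr}}\right)\); successively joining its cycles (each join moving up \(\preceq\)) gives a single cycle \(\lambda\succeq\varphi_{\mathrm{Re}},\varphi_{\mathrm{tr}}\), read as a linear order by cutting at \(\infty\). On \(\lambda\) I place the two bracket families realizing \(\varphi_{\mathrm{Re}}\) and \(\varphi_{\mathrm{tr}}\) (Lemma~\ref{lemma: brackets}), and use the forest form of (\ref{formula: glb}) to choose them mutually nested, giving a legal diagram whose associated permutations are \(\varphi_{\mathrm{Re}}\) and \(\varphi_{\mathrm{tr}}\) as required.

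Finally I reduce the \(\mathrm{PM}\) statement to the \(S\) statement via the orientable double cover. By Definition~\ref{definition: planar}, \(\varphi_{\mathrm{Re}}\) planar on \(\varphi_{\mathrm{tr}}\) furnishes a set \(J\subseteq\pm\left[n\right]_\infty\) meeting each \(\left\{k,-k\right\}\) once and built from whole cycles of both permutations, on which \(\left.\varphi_{\mathrm{Re}}\right|_J\) and \(\left.\varphi_{\mathrm{tr}}\right|_J\) are genuine permutations satisfying the hypotheses of the \(S\)-case (planarity translating, through Lemma~\ref{lemma: upper bound}, into a common upper bound). Applying the \(S\)-case on \(J\) produces a bracket diagram there; consistency of \(J\) with the covering map lets it be read as a diagram with \(\mathrm{Re}\) and \(\mathrm{tr}\) on bracketed intervals, the copy on \(-J\) supplying the conjugate and transpose readings demanded by (\ref{formula: annoying sign}). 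Since \(\#\) of the join, the meet, and each permutation may all be computed on \(J\), condition (\ref{formula: glb}) transfers unchanged. I expect the main obstacle to be precisely the equivalence between (\ref{formula: glb}) and mutual nestability: one must argue carefully that an acyclic interlocking can always be realized by nested brackets on a suitable single-cycle \(\lambda\) (respecting the point at infinity), while any cyclic interlocking forces an unavoidable crossing, and, in the \(\mathrm{PM}\) case, that the sheet \(J\) can be chosen compatibly with both families at once.
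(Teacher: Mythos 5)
Your proposal is correct in its architecture and that architecture coincides with the paper's: existence of a common linear order is converted, via Lemma~\ref{lemma: brackets}, into existence of a common upper bound (the conditions of Lemma~\ref{lemma: upper bound}); condition (\ref{formula: glb}) is identified as the exact criterion for simultaneous legal nesting on a fixed upper bound; and the \(\mathrm{PM}\) case is handled by restricting to a sheet furnished by Definition~\ref{definition: planar} and rerunning the \(S\left(\left[n\right]_{\infty}\right)\) argument. The one genuine difference is how (\ref{formula: glb}) is read. You rearrange it to \(\#\left(\Pi\left(\varphi_{\mathrm{Re}}\right)\vee\Pi\left(\varphi_{\mathrm{tr}}\right)\right)+\#\left(\Pi\left(\varphi_{\mathrm{Re}}\right)\wedge\Pi\left(\varphi_{\mathrm{tr}}\right)\right)=\#\left(\varphi_{\mathrm{Re}}\right)+\#\left(\varphi_{\mathrm{tr}}\right)\) and recognize it, by Euler's relation, as the statement that the bipartite block-intersection graph (vertices the cycles of the two permutations, edges the blocks of the meet, components the blocks of the join) is a forest; this is a correct and clarifying structural restatement. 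The paper performs the same count operationally: it builds both diagrams on an upper bound \(\zeta\) with \(\Pi\left(\zeta\right)=\Pi\left(\varphi_{\mathrm{Re}}\right)\vee\Pi\left(\varphi_{\mathrm{tr}}\right)\), notes that a legally nested bracket pair splits exactly one cycle of the combined permutation while a crossing pair must split two, and concludes that the number of cycles of the meet matches the additive count exactly when (\ref{formula: glb}) holds. What the paper's version buys is that it settles, by contraposition and in one stroke, the direction you yourself flag as the ``main obstacle'': if on a given \(\zeta\) the families cannot be nested, some \(\varphi_{\mathrm{tr}}\)-bracket splits two cycles, the meet acquires too many blocks, and (\ref{formula: glb}) fails---no tree-wise construction is needed. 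Your leaf-inward processing of the forest can be completed (a leaf cycle \(V\) of one permutation meets a unique cycle \(W\) of the other, hence \(V\subseteq W\) as sets, so its bracket can be nested inside \(W\)'s and deleted), but tying the tree structure to interval containments on the fixed order \(\lambda\)---including the freedom of whether a bracket absorbs an adjacent nested interval---is exactly the bookkeeping the cycle-splitting count avoids; so either carry that induction out explicitly or substitute the paper's contrapositive count, which your Euler observation already contains.
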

\begin{proof}
If there is a bracket diagram equal to \(\mathrm{Re}_{\varphi_{\mathrm{Re}}}\mathrm{tr}_{\varphi_{\mathrm{tr}}}\left(X_{1},\ldots,X_{n}\right)\), then the order in which the symbols appear (with \(X_{\infty}\) at the beginning) is the cycle notation of an upper bound for \(\varphi_{\mathrm{Re}}\) and \(\varphi_{\mathrm{tr}}\), so the two permutations satisfy Lemma~\ref{lemma: upper bound}.  Furthermore, the brackets of both functions must be legal, so if we add each pair of brackets one by one to the cycle notation of upper bound \(\zeta\) (where \(\zeta\) is chosen so that \(\Pi\left(\zeta\right)=\Pi\left(\varphi_{\mathrm{Re}}\right)\vee\Pi\left(\varphi_{\mathrm{tr}}\right)\)), each pair will split exactly one cycle.  Thus the number by which the number of cycles increases when we construct both \(\varphi_{\mathrm{Re}}\) and \(\varphi_{\mathrm{tr}}\) on the cycle notation of \(\zeta\) should be the sum of the numbers by which the number of cycles increases when we construct \(\varphi_{\mathrm{Re}}\) on \(\zeta\) and when we construct \(\varphi_{\mathrm{tr}}\) on \(\zeta\), so the two permutations satisfy (\ref{formula: glb}).

Conversely, if \(\varphi_{\mathrm{Re}}\) and \(\varphi_{\mathrm{tr}}\) satisfy Lemma~\ref{lemma: upper bound} and (\ref{formula: glb}), then it is possible to find a permutation \(\zeta\) such that \(\varphi_{\mathrm{Re}},\varphi_{\mathrm{tr}}\preceq\zeta\).  It is then possible to construct both \(\varphi_{\mathrm{Re}}\) and \(\varphi_{\mathrm{tr}}\) as bracket diagrams on the cycle notation of \(\zeta\).  If it is not possible to do so such that the brackets are properly nested, then whenever we construct \(\varphi_{\mathrm{Re}}\) on \(\zeta\), then add the brackets for a construction of \(\varphi_{\mathrm{tr}}\), at least one pair of brackets from \(\varphi_{\mathrm{tr}}\) must split two cycles of the existing permutation.  Then the number by which \(\#\left(\Pi\left(\varphi_{\mathrm{Re}}\right)\wedge\Pi\left(\varphi_{\mathrm{tr}}\right)\right)\) exceeds \(\#\left(\zeta\right)\) is greater than the sum of the number by which \(\#\left(\varphi_{\mathrm{Re}}\right)\) exceeds \(\#\left(\zeta\right)\) plus the number by which \(\#\left(\varphi_{\mathrm{tr}}\right)\) exceeds \(\#\left(\zeta\right)\), i.e.\ the permutations do not satisfy (\ref{formula: glb}).

For the second part of the proposition, we observe that if \(\varphi_{\mathrm{Re}}\) is planar on \(\varphi_{\mathrm{tr}}\), there must be an \(I\subseteq\pm\left[n\right]_{\infty}\) such that exactly one of \(k\) and \(-k\) is in \(I\) for all \(k\in\left[n\right]_{\infty}\) which is a union of cycles of both \(\varphi_{\mathrm{Re}}\) and \(\varphi_{\mathrm{tr}}\), so we may repeat the above proof on their restrictions to \(I\).  Conversely, if such a bracket diagram exists, then there must be such an \(I\), and again we may repeat the above proof.
\end{proof}

We adapt the following algorithm from \cite{MR0404045} (pages 41--42):
\begin{algorithm}
Let \(\varphi_{\mathrm{Re}},\varphi_{\mathrm{tr}}\in\mathrm{PM}\left(\left[n\right]_{\infty}\right)\) such that \(\varphi_{\mathrm{Re}}\) is planar on \(\varphi_{\mathrm{tr}}^{-1}\).  We construct a \(\zeta\in S\left(I\right)\) for some \(I\subseteq\pm\left[n\right]_{\infty}\) such that exactly one of \(\pm k\) is in \(I\) for all \(k\in\left[n\right]\).

We begin with symbol \(\infty\).  We then repeat the following, where \(k\) is the last chosen element:
\begin{itemize}
	\item If \(\varphi_{\mathrm{Re}}\left(k\right)\) is not in the same orbit of \(\varphi_{\mathrm{tr}}\) as any element already chosen, we let \(\zeta\left(k\right)=\varphi_{\mathrm{Re}}\left(k\right)\).
	\item If \(\varphi_{\mathrm{Re}}\left(k\right)\) is in the same orbit of \(\varphi_{\mathrm{tr}}\) as some element already chosen, we let \(\zeta\left(k\right)\) be the first of \(\varphi_{\mathrm{tr}}\left(k_{m}\right),\ldots,\varphi_{\mathrm{tr}}\left(k_{1}\right)\) which has not yet appeared, where \(k_{1},\ldots,k_{m}\) are the symbols in order which have already appeared.
	\item If all of \(\varphi_{\mathrm{tr}}\left(k_{m}\right),\ldots,\varphi_{\mathrm{tr}}\left(k_{1}\right)\) have all already appeared, we begin with an arbitrarily chosen \(k\) such that neither \(k\) or \(-k\) has appeared so far.  (We let this begin a new cycle of \(\zeta\) if we want \(\zeta\) to be a least upper bound, or we let it be \(\zeta\left(k\right)\) if we want \(\zeta\) to be cyclic.)
\end{itemize}
\label{algorithm: lub}
\end{algorithm}
Since we are always choosing an element from the same cycle as an element which has already appeared (and we exhaust all such cycles, since the second possibility exhausts cycles of \(\varphi_{\mathrm{tr}}\) and the first exhausts cycles of \(\varphi_{\mathrm{Re}}\) if the cycle of \(\varphi_{\mathrm{tr}}\) has not yet appeared), or such that its negative has not appeared, the domain of \(\zeta\) will be a union of cycles of both \(\varphi_{\mathrm{Re}}\) and \(\varphi_{\mathrm{tr}}\) which contains exactly one of \(k\) and \(-k\) for each \(k\in\pm\left[n\right]_{\infty}\).

See \cite{MR0404045}, pages 41--44 for a proof that both \(\varphi_{\mathrm{Re}}\) and \(\varphi_{\mathrm{tr}}\) are noncrossing, that is, planar, on \(\zeta^{-1}\).  By Lemma~\ref{lemma: upper bound}, they can then both be written as bracket diagrams on its cycle notation, and by Proposition~\ref{proposition: bracket diagram} it must be possible to do so legally as long as \(\varphi_{\mathrm{Re}}\) and \(\varphi_{\mathrm{tr}}\) satisfy (\ref{formula: glb}).

\begin{remark}
It is not always possible to write \(\mathrm{Re}_{K\left(\varphi_{\mathrm{Re}},\alpha\right)^{-1}}\) and \(\mathrm{tr}_{K\left(\varphi_{\mathrm{tr}},\alpha\right)^{-1}}\) simultaneously, even when it is possible to write \(\varphi_{\mathrm{Re}}\) and \(\varphi_{\mathrm{tr}}\) simultaneously (as was done in Example~\ref{example: topological expansion}).  A slightly larger class of permutations could be written simultaneously if we considered the transpose and entrywise conjugate separately.  Intuitively, this would allow permutations which do not have actual crossings, but which are ``nonstandard'', that is, a cycle is in the wrong order relative to the other permutation (see \cite{MR2052516} for this for formal definitions of this terminology).  To do so, we could create a four-sheeted covering space of the space, in which the original two-sheeted covering space is covered by disjoint copies with opposite orientations.

However, it would still be impossible to write simultaneously permutations which correspond to an actual crossing, such as \(\left(1,2,3,4\right)\left(-4,-3,-2,-1\right)\) and \(\left(1,3\right)\left(-3,-1\right)\left(2,4\right)\left(-4,-2\right)\), even though they can be produced as \(K\left(\varphi_{\mathrm{Re}},\alpha\right)^{-1}\) and \(K\left(\varphi_{\mathrm{tr}},\alpha\right)^{-1}\) where \(\varphi_{\mathrm{Re}}=\left(1,4,3,2\right)\), \(\varphi_{\mathrm{tr}}=\left(1\right)\left(2\right)\left(3\right)\left(4\right)\), and \(\alpha=\left(1,3\right)\left(-3,-1\right)\left(2,4\right)\left(-4,-2\right)\).
\end{remark}

\section{Matrices Constructed from Gaussian Random variables}
\label{section: Gaussian}

The following lemma for computing the expected value of products of Gaussian random variables is known as the Wick formula.
\begin{lemma}[Wick]
Let \(\xi_{1},\ldots,\xi_{n}\) be components of a centred multivariate Gaussian random variable.  Then
\[\mathbb{E}\left(\xi_{1},\ldots,\xi_{n}\right)=\sum_{\pi\in{\cal P}_{2}\left(n\right)}\prod_{\left\{k,l\right\}\in\pi}\mathbb{E}\left(\xi_{k}\xi_{l}\right)\textrm{.}\]
\label{lemma: Wick}
\end{lemma}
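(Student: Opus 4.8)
The plan is to obtain the formula from the moment generating function of the Gaussian vector, extracting a single mixed partial derivative and reading off the pair-partition structure from the resulting combinatorics. First I would write \(C_{kl}:=\mathbb{E}\left(\xi_{k}\xi_{l}\right)\) for the covariance of the centred vector \(\left(\xi_{1},\ldots,\xi_{n}\right)\) and recall the standard identity
\[
\mathbb{E}\left(\exp\left(\sum_{k=1}^{n}t_{k}\xi_{k}\right)\right)=\exp\left(\frac{1}{2}\sum_{k,l}C_{kl}t_{k}t_{l}\right)\textrm{,}
\]
valid for real parameters \(t_{1},\ldots,t_{n}\) (this is the Gaussian characteristic function after a substitution, and may be taken as the defining property of a centred Gaussian vector). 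Since applying one \(t_{k}\)-derivative in each variable and evaluating at \(t=0\) recovers \(\mathbb{E}\left(\xi_{1}\cdots\xi_{n}\right)\) on the left, it suffices to extract the coefficient of the squarefree monomial \(t_{1}\cdots t_{n}\) on the right.

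Next I would expand the right-hand exponential as \(\sum_{m\geq 0}\frac{1}{m!}\left(\frac{1}{2}\sum_{k,l}C_{kl}t_{k}t_{l}\right)^{m}\). Because we isolate a degree-\(n\) monomial in which each variable appears exactly once, only \(m=n/2\) can contribute and the diagonal terms \(C_{kk}t_{k}^{2}\) drop out; in particular, when \(n\) is odd no term survives and both sides vanish, matching the empty sum over \({\cal P}_{2}\left(n\right)\). Expanding the \(m\)-th power produces an ordered choice of \(n/2\) unordered index pairs, and each \(\pi\in{\cal P}_{2}\left(n\right)\) arises with multiplicity \(\left(n/2\right)!\,2^{n/2}\): the \(\left(n/2\right)!\) orders in which the factors may be matched to the blocks of \(\pi\), and the two readings \(C_{kl}t_{k}t_{l}=C_{lk}t_{l}t_{k}\) of each symmetric factor. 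This multiplicity exactly cancels the prefactor \(\frac{1}{\left(n/2\right)!}\,2^{-n/2}\), leaving precisely \(\sum_{\pi\in{\cal P}_{2}\left(n\right)}\prod_{\left\{k,l\right\}\in\pi}C_{kl}\).

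The main obstacle is this bookkeeping of symmetry factors: one must verify that the \(\left(n/2\right)!\,2^{n/2}\) overcounting matches the prefactor and that no spurious contributions survive from repeated indices. An alternative that avoids the generating-function expansion is a direct induction on \(n\) via Gaussian integration by parts. The identity \(\mathbb{E}\left(\xi_{1}g\right)=\sum_{k=2}^{n}\mathbb{E}\left(\xi_{1}\xi_{k}\right)\mathbb{E}\left(\partial_{\xi_{k}}g\right)\), applied with \(g=\xi_{2}\cdots\xi_{n}\), expresses \(\mathbb{E}\left(\xi_{1}\cdots\xi_{n}\right)\) as a sum over the possible partners \(k\) of the index \(1\), each weighted by the inductively known pairing sum over the remaining indices. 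Since every \(\pi\in{\cal P}_{2}\left(n\right)\) is determined uniquely by the partner of \(1\) together with a pairing of the rest, this reconstructs the full sum bijectively, and the base case \(n=2\) is immediate. Either route is short; I would likely present the generating-function derivation as the cleanest self-contained argument.
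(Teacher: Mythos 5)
The paper does not prove this lemma at all: it states the Wick formula as a known result and defers the proof to a citation (\cite{MR2036721}, Chapter~3), so there is no internal argument to compare yours against. Your proposal is a correct, self-contained proof. The generating-function route is sound: the identity \(\mathbb{E}\left(\exp\left(\sum_{k}t_{k}\xi_{k}\right)\right)=\exp\left(\frac{1}{2}\sum_{k,l}C_{kl}t_{k}t_{l}\right)\) holds for centred Gaussian vectors, the restriction to the squarefree monomial \(t_{1}\cdots t_{n}\) correctly kills the diagonal terms and forces \(m=n/2\) (with the odd-\(n\) case vanishing on both sides), and your symmetry-factor bookkeeping is exactly right --- each \(\pi\in{\cal P}_{2}\left(n\right)\) is hit \(\left(n/2\right)!\,2^{n/2}\) times, cancelling \(\frac{1}{\left(n/2\right)!}2^{-n/2}\). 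Two small points you should make explicit if writing this up: the interchange of \(\partial_{t_{1}}\cdots\partial_{t_{n}}\) with the expectation needs a one-line justification (dominated convergence, using finiteness of Gaussian exponential moments in a neighbourhood of \(0\)), and the integration-by-parts identity \(\mathbb{E}\left(\xi_{1}g\right)=\sum_{k}\mathbb{E}\left(\xi_{1}\xi_{k}\right)\mathbb{E}\left(\partial_{\xi_{k}}g\right)\) in your alternative route is itself a nontrivial fact requiring either a density argument or essentially the same generating-function computation, so the induction is not quite as ``free'' as it looks --- but granting that identity, the bijection between pairings of \(\left[n\right]\) and choices of a partner for \(1\) together with a pairing of the remaining \(n-2\) elements does complete the induction correctly. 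Either version would serve as a legitimate replacement for the paper's citation.
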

See, for example, \cite{MR2036721}, Chapter~3, for a proof.  We note that, if \(Z\) is a complex Gaussian random variable, we calculate that \(\mathbb{E}\left(Z^{2}\right)=0\).  A nonzero expected value must pair variables with their complex conjugates.

\begin{definition}
A {\em standard quaternionic Gaussian random variable} is
\[\xi_{0}+i\xi_{1}+j\xi_{2}+k\xi_{3}\]
where the \(\xi_{i}\) are independent \(N\left(0,\frac{1}{4}\right)\) random variables; or, in matrix form:
\[\left(\begin{array}{cc}\xi_{0}+i\xi_{1}&\xi_{2}+i\xi_{3}\\-\xi_{2}+i\xi_{3}&\xi_{0}-i\xi_{1}\end{array}\right)\textrm{.}\]
\end{definition}

We note that the expected value of the product of two entries (as appears in a Wick product) vanishes unless the entries have opposite indices.  If the entries are the diagonal entries, the expected value of the product is \(\frac{1}{2}\), and if they are the off-diagonal entries, the expected value of the product is \(-\frac{1}{2}\).

\subsection{Ginibre matrices}

\begin{definition}
Let \(G\) be an \(N\times N\) matrix whose entries are independent standard quaternionic Gaussian random variables.  Then \(Z:=\frac{1}{\sqrt{N}}G\) is a {\em quaternionic Ginibre} matrix.
\end{definition}

Ginibre matrices are originally defined in \cite{MR0173726}.

\begin{proposition}
Let \(Z\) be a Ginibre matrix.  Then
\begin{multline*}
\mathbb{E}\left(\prod_{k=1}^{n}Z_{\iota_{k}\iota_{-k};\eta_{k}\eta_{-k}}\right)
\\=\sum_{\substack{\alpha\in\mathrm{PM}\left(\pm\left[n\right]\right)\\\iota=\iota\circ\delta\alpha\\\mathrm{sgn}\cdot\eta=\mathrm{sgn}\circ\alpha\cdot\eta\circ\delta\alpha}}\left[\prod_{\substack{k\in\left[n\right]\\-k\in\mathrm{FD}\left(\alpha\right)}}\eta_{k}\eta_{-k}\right]\left(2N\right)^{\#\left(\alpha\right)/2-n}f\left(\alpha\right)
\end{multline*}
where \(f:\mathrm{PM}\left(n\right)\rightarrow\mathbb{C}\) is defined by
\[f\left(\alpha\right)=\left\{\begin{array}{ll}1,&\alpha\in{\cal P}_{2}\left(\pm\left[n\right]\right)\cap S_{\mathrm{alt}}\left(\pm\left[n\right]\right)\\0,&\textrm{otherwise}\end{array}\right.\textrm{.}\]
\label{proposition: Ginibre}
\end{proposition}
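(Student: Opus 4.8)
The plan is to reduce the statement to the Wick formula (Lemma~\ref{lemma: Wick}) together with the second-moment data of a standard quaternionic Gaussian recorded just before the Ginibre definition. First I would extract the scaling, writing \(Z=N^{-1/2}G\) so that the left-hand side equals \(N^{-n/2}\mathbb{E}\left(\prod_{k=1}^{n}G_{\iota_k\iota_{-k};\eta_k\eta_{-k}}\right)\). Since the complex entries of \(G\) are \(\mathbb{C}\)-linear combinations of the underlying real Gaussians, the Wick formula extends by multilinearity of the covariance to give \(\mathbb{E}\left(\prod_{k}G_{\cdots}\right)=\sum_{\pi\in{\cal P}_2(n)}\prod_{\{k,l\}\in\pi}\mathbb{E}\left(G_{\iota_k\iota_{-k};\eta_k\eta_{-k}}G_{\iota_l\iota_{-l};\eta_l\eta_{-l}}\right)\). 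Entries of \(G\) at distinct matrix positions are independent, and within a single position the only nonvanishing second moments are between complementary quaternion indices, contributing \(\frac{1}{2}\) in the diagonal case and \(-\frac{1}{2}\) in the off-diagonal case. Thus each surviving pair \(\{k,l\}\) forces \(\iota_k=\iota_l\), \(\iota_{-k}=\iota_{-l}\) and \(\eta_l=-\eta_k\), \(\eta_{-l}=-\eta_{-k}\), and contributes \(\frac{1}{2}\eta_k\eta_{-k}\) (the sign being \(+1\) exactly when \(\eta_k=\eta_{-k}\)).

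Next I would set up a bijection between the surviving pairings \(\pi\in{\cal P}_2(n)\) and the alternating pairings \(\alpha\in{\cal P}_2(\pm[n])\cap S_{\mathrm{alt}}(\pm[n])\subseteq\mathrm{PM}(\pm[n])\) appearing in the claimed sum. Given a Wick pair \(\{k,l\}\) with \(k,l>0\), the unique alternating, \(\delta\)-invariant lift is \(\alpha\supseteq(k,-l)(-k,l)\); conversely the absolute values of such a cycle-pair recover \(\{k,l\}\). This is well defined because \(k\neq l\) (a Wick pair joins distinct factors), so no cycle of \(\alpha\) contains both \(m\) and \(-m\), placing \(\alpha\) in \(\mathrm{PM}\). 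I would then verify that the two summation constraints are exactly the surviving-pair conditions: on \((k,-l)\) one computes \(\delta\alpha(k)=l\) and \(\delta\alpha(-l)=-k\), so \(\iota=\iota\circ\delta\alpha\) is equivalent to \(\iota_k=\iota_l\) and \(\iota_{-k}=\iota_{-l}\); and evaluating \(\mathrm{sgn}\cdot\eta=\mathrm{sgn}\circ\alpha\cdot\eta\circ\delta\alpha\) on \(k\) and on \(-l\) yields \(\eta_k=-\eta_l\) and \(\eta_{-k}=-\eta_{-l}\). Non-pairing or non-alternating \(\alpha\) are annihilated by \(f(\alpha)=0\) and may be discarded.

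It then remains to match magnitudes and signs term by term. For an alternating pairing \(\alpha\) we have \(\#(\alpha)=n\), so \((2N)^{\#(\alpha)/2-n}=(2N)^{-n/2}=2^{-n/2}N^{-n/2}\), absorbing the \(N^{-n/2}\) from the scaling and the \((\tfrac12)^{n/2}\) of absolute values from the \(n/2\) pairs. For signs I would note that the constraints give \(\eta_l\eta_{-l}=\eta_k\eta_{-k}\), so the per-pair sign \(\eta_k\eta_{-k}\) is intrinsic to the Wick pair, and that \(\mathrm{FD}(\alpha)\) selects from each cycle-pair \(\{(k,-l),(-k,l)\}\) the cycle whose smallest-absolute-value element is positive, contributing exactly one index \(m\in[n]\) with \(-m\in\mathrm{FD}(\alpha)\). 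Hence \(\prod_{k\in[n],\,-k\in\mathrm{FD}(\alpha)}\eta_k\eta_{-k}\) reproduces precisely the product of per-pair signs, and the two sides agree.

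The main obstacle I expect is the sign bookkeeping: one must check that the \(\mathrm{sgn}\)-twisted constraint reproduces the complementary-index condition with no spurious sign, and that the single index selected by \(\mathrm{FD}(\alpha)\) per pair produces a factor \(\eta_k\eta_{-k}\) agreeing with the diagonal/off-diagonal value \(\pm\tfrac{1}{2}\) (together with the intrinsic \(\tfrac{1}{2}\)) recorded in the remark preceding the Ginibre definition. The parity-of-\(n\) case is automatic: when \(n\) is odd there are no pairings on either side and both expressions vanish.
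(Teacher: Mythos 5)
Your proof is correct and follows essentially the same route as the paper's: rescale, apply the Wick formula, identify the surviving Wick pairings with alternating pairings in \(\mathrm{PM}\left(n\right)\) via \(\left\{k,l\right\}\mapsto\left(k,-l\right)\left(-k,l\right)\) (this is exactly the paper's bijection \(\pi\mapsto\pi\delta\pi\)), and then match the index constraints, the magnitude \(\left(2N\right)^{-n/2}\), and the per-pair sign \(\eta_{k}\eta_{-k}\). Your bookkeeping of which element of each cycle-pair lands in \(\mathrm{FD}\left(\alpha\right)\) is in fact more explicit than the paper's one-line conclusion on this point.
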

\begin{proof}
By the Wick formula (Lemma~\ref{lemma: Wick}),
\[\mathbb{E}\left(\prod_{k=1}^{n}Z_{\iota_{k}\iota_{-k};\eta_{k}\eta_{-k}}\right)=\frac{1}{N^{n/2}}\sum_{\pi\in{\cal P}_{2}}\prod_{\left\{k,l\right\}\in\pi}\mathbb{E}\left(G_{\iota_{k}\iota_{-k};\eta_{k}\eta_{-k}}G_{\iota_{l}\iota_{-l};\eta_{l}\eta_{-l}}\right)\textrm{.}\]
We note that the map \(\pi\mapsto\pi\delta\pi\) is a bijection between \({\cal P}_{2}\left(n\right)\) and \(\mathrm{PM}\left(n\right)\cap{\cal P}_{2}\left(\pm\left[n\right]\right)\cap S_{\mathrm{alt}}\left(\pm\left[n\right]\right)\).  (If pair \(\left\{k,l\right\}\) appears in pair \(\pi\), then pairs \(\left\{k,-l\right\}\) and \(\left\{-k,l\right\}\) appear in \(\pi\delta\pi\), from which this follows.)  We can then rewrite the above as a sum over \(\pi\mapsto\pi\delta\pi\):
\[\frac{1}{N^{n/2}}\sum_{\alpha\in\mathrm{PM}\left(n\right)\cap{\cal P}_{2}\left(n\right)\cap S_{\mathrm{alt}}\left(\pm\left[n\right]\right)}\prod_{\left\{k,-l\right\}\in\mathrm{FD}\left(\alpha\right)}\mathbb{E}\left(G_{\iota_{k}\iota_{-k};\eta_{k}\eta_{-k}}G_{\iota_{l}\iota_{-l};\eta_{l}\eta_{-l}}\right)\textrm{.}\]
The expected value term corresponding to \(k\) vanishes (and hence so does the entire product) unless the two terms are complex conjugates of the same random variable, i.e.\ unless \(\iota_{\pm k}=\iota_{\pm\delta\alpha\left(k\right)}\) and \(\eta_{k}=-\eta_{\delta\alpha\left(k\right)}\) (that is, \(\mathrm{sgn}\left(k\right)\eta_{k}=\mathrm{sgn}\left(\alpha\left(k\right)\right)\eta_{\alpha\delta\left(k\right)}\)).  The magnitude will then be \(\frac{1}{2}\), and the sign depends on whether the entries are from the diagonals or the off-diagonals, i.e., on \(\eta_{l}\eta_{-l}\).  The result follows.
\end{proof}

\begin{remark}
The \(1\times 1\) case of Proposition~\ref{proposition: Ginibre} or Proposition~\ref{proposition: topological expansion} with Ginibre matrices, or their linear extensions, may be considered forms of quaternionic Wick formulae.  See also \cite{MR2480549, MR2986851}.
\end{remark}

\subsection{Gaussian symplectic ensemble matrices}

\begin{definition}
Let \(G\) be an \(N\times N\) random matrix whose entries are independent standard quaternionic Gaussian random variables.  Let \(T:=\frac{1}{\sqrt{2N}}\left(G+G^{\ast}\right)\).  Then \(T\) is a Gaussian symplectic ensemble, or GSE, matrix.
\end{definition}

\begin{proposition}
Let \(T\) be a GSE matrix.  Then
\[\mathbb{E}\left(\prod_{k=1}^{n}T_{\iota_{k}\iota_{-k};\eta_{k}\eta_{-k}}\right)=\sum_{\substack{\alpha\in\mathrm{PM}\left(\pm\left[n\right]\right)\\\iota=\iota\circ\delta\alpha\\\mathrm{sgn}\cdot\eta=\mathrm{sgn}\circ\alpha\cdot\eta\circ\delta\alpha}}\left[\prod_{\substack{k\in\left[n\right]\\-k\in\mathrm{FD}\left(\alpha\right)}}\eta_{k}\eta_{-k}\right]\left(2N\right)^{\#\left(\alpha\right)/2-n}f\left(\alpha\right)\]
where \(f:\mathrm{PM}\left(N\right)\rightarrow\mathbb{C}\) is defined by
\[f\left(\alpha\right)=\left\{\begin{array}{ll}1\textrm{,}&\alpha\in{\cal P}_{2}\left(\pm\left[n\right]\right)\\0\textrm{,}&\textrm{otherwise}\end{array}\right.\textrm{.}\]
\end{proposition}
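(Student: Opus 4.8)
The plan is to apply the Wick formula (Lemma~\ref{lemma: Wick}) directly to \(T\), exactly as in the proof of Proposition~\ref{proposition: Ginibre}. Since the entries of \(T\) are centred real-linear combinations of the Gaussian components of \(G\), they are jointly Gaussian, so the only new input required is the covariance of two entries of \(T\). Writing \(T_{ab;\eta\theta}=\left(2N\right)^{-1/2}\left(G_{ab;\eta\theta}+\left[G^{\ast}\right]_{ab;\eta\theta}\right)\) and using (\ref{formula: annoying sign}) in the form \(\left[G^{\ast}\right]_{ab;\eta\theta}=\eta\theta\,G_{ba;-\theta,-\eta}\), I would expand \(\mathbb{E}\left(T_{ab;\eta\theta}T_{cd;\mu\nu}\right)\) into four Gaussian covariances and evaluate each using the rule recorded after the definition of the standard quaternionic Gaussian: \(\mathbb{E}\left(G_{ab;\eta\theta}G_{cd;\mu\nu}\right)\) vanishes unless the two entries occupy the same matrix position with opposite quaternion indices, in which case it equals \(\frac{1}{2}\eta\theta\). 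A short computation collapses the four terms into two nonzero contributions to \(\mathbb{E}\left(T_{ab;\eta\theta}T_{cd;\mu\nu}\right)\), each carrying the prefactor \(\left(2N\right)^{-1}\): one supported on \(a=c\), \(b=d\), \(\mu=-\eta\), \(\nu=-\theta\) with value \(\eta\theta\), and one supported on \(a=d\), \(b=c\), \(\mu=\theta\), \(\nu=\eta\) with value \(1\).

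This dichotomy is the crux of the argument. Setting \(a=\iota_{k}\), \(b=\iota_{-k}\), \(\eta=\eta_{k}\), \(\theta=\eta_{-k}\), and similarly for \(l\), I would read off through the constraints \(\iota=\iota\circ\delta\alpha\) and \(\mathrm{sgn}\cdot\eta=\mathrm{sgn}\circ\alpha\cdot\eta\circ\delta\alpha\) which pairs of \(\alpha\) a Wick-paired block \(\left\{k,l\right\}\) must produce. The first (``same position'') contribution forces \(\alpha\) to contain the \(\delta\)-pair of two-cycles \(\left(k,-l\right),\left(-k,l\right)\), which is alternating; these are exactly the pairs surviving in Proposition~\ref{proposition: Ginibre}. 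The second (``transposed'') contribution, which exists only because \(G^{\ast}\) interchanges the matrix indices, instead forces the two-cycles \(\left(k,l\right),\left(-k,-l\right)\), which is non-alternating. Hence a nonzero Wick term is precisely the datum of a pairing \(\pi\in{\cal P}_{2}\left(n\right)\) together with a choice, for each block, of ``alternating'' or ``non-alternating'', and this datum is in bijection with the full set \(\mathrm{PM}\left(n\right)\cap{\cal P}_{2}\left(\pm\left[n\right]\right)\), each \(\alpha\) arising exactly once. This is exactly why the restriction to \(S_{\mathrm{alt}}\left(\pm\left[n\right]\right)\) present for the Ginibre ensemble disappears and \(f\left(\alpha\right)=1\) on all pairings.

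The power of \(2N\) is then immediate: there are \(n/2\) Wick blocks, each covariance supplies a factor \(\left(2N\right)^{-1}\), and \(\#\left(\alpha\right)=n\), so the total prefactor is \(\left(2N\right)^{-n/2}=\left(2N\right)^{\#\left(\alpha\right)/2-n}\). The one genuinely delicate step, and the part I expect to be the main obstacle, is checking that the signs agree with the factor \(\prod_{k\in\left[n\right],\,-k\in\mathrm{FD}\left(\alpha\right)}\eta_{k}\eta_{-k}\) demanded by the statement: the ``same position'' covariance carries the sign \(\eta_{k}\eta_{-k}\) (equal to \(\eta_{l}\eta_{-l}\) on its support), while the ``transposed'' covariance carries \(+1\). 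I would reconcile these against which member of each \(\delta\)-pair of two-cycles lies in \(\mathrm{FD}\left(\alpha\right)\)—a non-alternating pair places its all-positive cycle in \(\mathrm{FD}\), contributing nothing to the \(\mathrm{FD}\)-product, whereas an alternating pair contributes exactly \(\eta_{k}\eta_{-k}\)—so that the two bookkeepings coincide, exactly as the diagonal/off-diagonal \(\pm\frac{1}{2}\) rule was tracked in the Ginibre proof.

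As an alternative framing that makes the same point transparent, one may write \(T=\frac{1}{\sqrt{2}}\left(Z+Z^{\ast}\right)\) with \(Z\) a Ginibre matrix, expand into \(2^{n}\) mixed moments of \(Z\) and \(Z^{\ast}\), convert the \(Z^{\ast}\) entries via (\ref{formula: annoying sign}), and apply Proposition~\ref{proposition: Ginibre} termwise; the choice of \(Z\) versus \(Z^{\ast}\) for each factor plays the role of the ``same''/``transposed'' choice above, so summing over it fills in precisely the non-alternating pairings.
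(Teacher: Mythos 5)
Your proposal is correct, and your primary argument is organized differently from the paper's. The covariance you compute is right: expanding \(\mathbb{E}\left(T_{ab;\eta\theta}T_{cd;\mu\nu}\right)\) via (\ref{formula: annoying sign}) into four Gaussian covariances, the terms collapse pairwise to \(\left(2N\right)^{-1}\left[\delta_{ac}\delta_{bd}\delta_{\mu,-\eta}\delta_{\nu,-\theta}\,\eta\theta+\delta_{ad}\delta_{bc}\delta_{\mu\theta}\delta_{\nu\eta}\right]\), and your identification of the two supports with the alternating pair of two-cycles \(\left(k,-l\right)\left(-k,l\right)\) and the non-alternating pair \(\left(k,l\right)\left(-k,-l\right)\), together with the \(\mathrm{FD}\left(\alpha\right)\) sign bookkeeping, checks out against the constraints \(\iota=\iota\circ\delta\alpha\) and \(\mathrm{sgn}\cdot\eta=\mathrm{sgn}\circ\alpha\cdot\eta\circ\delta\alpha\).

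As for the comparison: the paper's own proof is essentially your ``alternative framing.'' It expands each factor as \(T=\frac{1}{\sqrt{2N}}\left(G+G^{\ast}\right)\), i.e.\ sums over \(\varepsilon:\left[n\right]\rightarrow\left\{1,-1\right\}\), applies the Wick formula to the resulting products of \(G\)-entries over pairings \(\pi\in{\cal P}_{2}\left(n\right)\), and then needs a counting step: the map \(\left(\varepsilon,\pi\right)\mapsto\alpha=\delta_{\varepsilon}\delta\pi\delta\pi\delta_{\varepsilon}\delta\) is \(2^{n/2}\)-to-one onto \(\mathrm{PM}\left(n\right)\cap{\cal P}_{2}\left(\pm\left[n\right]\right)\), with this multiplicity exactly cancelling the factor \(2^{-n/2}\) coming from the \(\pm\frac{1}{2}\) Gaussian covariances. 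Your primary route---Wick applied directly to the jointly Gaussian entries of \(T\) using the closed-form covariance---removes the multiplicity argument entirely: each Wick datum (a pairing of \(\left[n\right]\) plus a per-block choice of ``same position'' or ``transposed'') corresponds bijectively to one \(\alpha\), and the factor-of-two bookkeeping is absorbed once and for all into the collapse of four Gaussian terms into two of modulus one. The paper's organization has the advantage of reusing the \(\varepsilon\)/\(\delta_{\varepsilon}\) machinery that recurs in Proposition~\ref{proposition: topological expansion} and of running parallel to the computation in Proposition~\ref{proposition: Ginibre}; yours is more self-contained and gives a cleaner enumeration with no overcounting to track. Either way the prefactor \(\left(2N\right)^{-n/2}=\left(2N\right)^{\#\left(\alpha\right)/2-n}\) and the sign \(\prod_{k\in\left[n\right],\,-k\in\mathrm{FD}\left(\alpha\right)}\eta_{k}\eta_{-k}\) come out exactly as you state.
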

\begin{proof}
We note (using (\ref{formula: annoying sign})) that
\begin{align*}
T_{\iota_{k}\iota_{-k};\eta_{k}\eta_{-k}}&=\frac{1}{\sqrt{2N}}\left(G_{\iota_{k}\iota_{k};\eta_{k},\eta_{-k}}+G^{\ast}_{\iota_{k}\iota_{-k};\eta_{k}\eta_{-k}}\right)
\\&=\frac{1}{\sqrt{2N}}\left(G_{\iota_{k},\iota_{-k};\mathrm{sgn}\left(k\right)\eta_{k},\mathrm{sgn}\left(k\right)\eta_{-k}}\right.
\\&\left.+\eta_{k}\eta_{-k}G_{\iota_{-k},\iota_{k};\mathrm{sgn}\left(-k\right)\eta_{-k},\mathrm{sgn}\left(-k\right)\eta_{k}}\right)
\end{align*}
Expanding, we express the product as a sum over all \(\varepsilon:\left[n\right]\rightarrow\left\{1,-1\right\}\) (and defining \(\delta_{\varepsilon}\) as usual):
\begin{multline}
\prod_{k=1}^{n}T_{\iota_{k}\iota_{-k};\eta_{k}\eta_{-k}}=\left(2N\right)^{n/2}\sum_{\varepsilon:\left[n\right]\rightarrow\left\{1,-1\right\}}\left[\prod_{k\in\varepsilon^{-1}\left(\left[n\right]\right)}\eta_{k}\eta_{-k}\right]
\\\times\prod_{k=1}^{n}G_{\iota_{\delta_{\varepsilon}\left(k\right)},\iota_{-\delta_{\varepsilon}\left(k\right)};\varepsilon\left(k\right)\eta_{\delta_{\varepsilon}\left(k\right)},\varepsilon\left(k\right)\eta_{-\delta_{\varepsilon}\left(k\right)}}\textrm{.}
\label{formula: GSE expansion}
\end{multline}
By the Wick formula, the expected value of the product of Gaussians is:
\begin{multline*}
\mathbb{E}\left(\prod_{k\in\left[n\right]}G_{\iota_{\delta_{\varepsilon}\left(k\right)},\iota_{-\delta_{\varepsilon}\left(k\right)};\varepsilon\left(k\right)\eta_{\delta_{\varepsilon}\left(k\right)},\varepsilon\left(k\right)\eta_{-\delta_{\varepsilon}\left(k\right)}}\right)
\\=\sum_{\pi\in{\cal P}_{2}\left(n\right)}\prod_{\left\{k,l\right\}\in\pi}\mathbb{E}\left(G_{\iota_{\delta_{\varepsilon}\left(k\right)},\iota_{-\delta_{\varepsilon}\left(k\right)};\varepsilon\left(k\right)\eta_{\delta_{\varepsilon}\left(k\right)},\varepsilon\left(k\right)\eta_{-\delta_{\varepsilon}\left(k\right)}}\right.\\\left.\times G_{\iota_{\delta_{\varepsilon}\left(l\right)},\iota_{-\delta_{\varepsilon}\left(l\right)};\varepsilon\left(l\right)\eta_{\delta_{\varepsilon}\left(l\right)},\varepsilon\left(l\right)\eta_{-\delta_{\varepsilon}\left(l\right)}}\right)\textrm{.}
\end{multline*}
For a given \(\pi\), the product vanishes unless for each \(\left\{k,l\right\}\in\pi\), \(\iota_{\pm\delta_{\varepsilon}\left(k\right)}=\iota_{\pm\delta_{\varepsilon}\left(l\right)}\) and \(\varepsilon\left(k\right)\eta_{\pm\delta_{\varepsilon}\left(k\right)}=-\varepsilon\left(l\right)\eta_{\pm\delta_{\varepsilon}\left(l\right)}\); that is, unless \(\iota=\iota\circ\delta\alpha\) and \(\mathrm{sgn}\cdot\eta=\mathrm{sgn}\circ\alpha\cdot\eta\circ\delta\alpha\) where \(\alpha:=\delta_{\varepsilon}\delta\pi\delta\pi\delta_{\varepsilon}\delta\).

If pair \(\left\{k,l\right\}\in\pi\), then pairs \(\left\{\delta_{\varepsilon}\left(k\right),-\delta_{\varepsilon}\left(l\right)\right\}\) and \(\left\{-\delta_{\varepsilon}\left(k\right),\delta_{\varepsilon}\left(l\right)\right\}\) appear in \(\alpha\).  The contribution of the pair is \(\eta_{k}\eta_{-k}\frac{1}{2}\).  Including the product of \(\eta_{k}\eta_{-k}\) in (\ref{formula: GSE expansion}), this pair will contribute an odd number of \(\eta_{k}\eta_{-k}\) exactly when \(\varepsilon\left(k\right)=\varepsilon\left(l\right)\), that is, when \(\mathrm{sgn}\left(k\right)\neq\mathrm{sgn}\left(\alpha\left(k\right)\right)\), that is, when exactly one element of the corresponding pair in \(\mathrm{FD}\left(\alpha\right)\) is negative, as desired.

Different \(\varepsilon\) and \(\pi\) will produce same \(\alpha\) if and only if \(\varepsilon\left(k\right)\varepsilon\left(l\right)\) is the same for all \(\left\{k,l\right\}\in\pi\), so the map \(\left(\varepsilon,\pi\right)\mapsto\alpha:{\cal P}_{2}\left(n\right)\rightarrow\mathrm{PM}\left(n\right)\cap{\cal P}_{2}\left(\pm\left[n\right]\right)\) is \(2^{n/2}\) to one and onto.  This gives us the desired contribution for each \(\alpha\in\mathrm{PM}\left(N\right)\cap{\cal P}_{2}\left(\pm\left[n\right]\right)\).
\end{proof}

\subsection{Wishart matrices}

\begin{definition}
Let \(G\) be an \(M\times N\) quaternionic matrix with each \(G_{\kappa\iota}\) (\(\kappa,\iota\in\left[n\right]\)) an independent standard quaternionic Gaussian random variable, and let \(D\in M_{M\times M}\left(\mathbb{H}\right)\).  Then \(W:=\frac{1}{N}G^{\ast}DG\) is a {\em quaternionic Wishart matrix}.

It is often useful to consider sets of Wishart matrices with the same (i.e.\ not independent) \(G\) but with different \(D\): \(W_{k}:=\frac{1}{N}G^{\ast}D_{k}G\) for \(D_{k}\in M_{M\times M}\left(\mathbb{H}\right)\), \(k=1,2,\ldots\).
\end{definition}

\begin{proposition}
For \(k\in\left[n\right]\), let \(W_{k}=\frac{1}{N}G^{\ast}D_{k}G\) be Wishart matrices.  Then
\begin{multline*}
\mathbb{E}\left(W^{\left(1\right)}_{\iota_{1}\iota_{-1};\eta_{1}\eta_{-1}}\cdots W^{\left(n\right)}_{\iota_{n}\iota_{-n};\eta_{n}\eta_{-n}}\right)\\=\sum_{\substack{\alpha\in PM\left(n\right)\\\iota=\iota\circ\delta\alpha\\\mathrm{sgn}\cdot\eta=\mathrm{sgn}\circ\alpha\cdot\eta\circ\delta\alpha}}\left(2N\right)^{\#\left(\alpha\right)/2-n}\left[\prod_{\substack{k\in\left[n\right]\\-k\in\mathrm{FD}\left(\alpha\right)}}\eta_{k}\eta_{-k}\right]f\left(\alpha\right)
\end{multline*}
where \(f:\mathrm{PM}\left(n\right)\rightarrow\mathbb{C}\) is given by
\[f\left(\alpha\right)=\mathrm{Re}_{\mathrm{FD}\left(\alpha^{-1}\right)}\mathrm{tr}_{\mathrm{FD}\left(\alpha^{-1}\right)}\left(D_{1},\ldots,D_{n}\right)\textrm{.}\]
\end{proposition}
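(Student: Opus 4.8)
The plan is to expand each Wishart entry into its constituent Gaussian and deterministic factors, average the Gaussians by the Wick formula, and then recognise the surviving contraction of the $D_k$ as a real-part--trace expression.

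First I would write $W_k=\frac1N G^{\ast} D_k G$ in full index notation and use (\ref{formula: annoying sign}) to rewrite every entry of $G^{\ast}$ as an entry of $G$ (at the cost of a product of its quaternion indices and a sign). This expresses $\prod_{k}W^{(k)}_{\iota_k\iota_{-k};\eta_k\eta_{-k}}$ as a sum over internal matrix indices $p_k,q_k\in[M]$ and internal quaternion indices $\mu_k,\nu_k\in\{1,-1\}$ of a product of $2n$ entries of the single Gaussian matrix $G$ and $n$ entries of the $D_k$, weighted by the conversion signs. I would index the $2n$ Gaussian factors by $\pm[n]$, letting $+k$ label the genuine $G$ of $W_k$ and $-k$ the converted $G^{\ast}$.

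Since the $D_k$ are deterministic, the expectation factors through the $2n$ Gaussian entries, which I would evaluate by the Wick formula (Lemma~\ref{lemma: Wick}). As in the proof of Proposition~\ref{proposition: Ginibre}, the vanishing of $\mathbb{E}(Z^2)$ for complex Gaussians forces each surviving pair to match matrix positions and to carry opposite quaternion indices, contributing $\pm\frac12$. A surviving pairing is therefore an element $\beta\in{\cal P}_2(\pm[n])$, and I would set $\alpha:=\delta\beta\in\mathrm{PM}(n)$ through the bijection of Lemma~\ref{lemma: pairing bijection}. The position-matching condition is then exactly $\iota=\iota\circ\delta\alpha$ and the opposite-quaternion-index condition is exactly $\mathrm{sgn}\cdot\eta=\mathrm{sgn}\circ\alpha\cdot\eta\circ\delta\alpha$, reproducing the summation constraints; this step runs parallel to the Ginibre argument.

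It remains to contract the $D_k$. The pairing identifies the internal indices $p_k,q_k$ (and $\mu_k,\nu_k$) of the $D_k$, threading them into cycles, and I would show these are the cycles of $\mathrm{FD}(\alpha^{-1})$, so that summing the $D$-entries yields exactly $\mathrm{Re}_{\mathrm{FD}(\alpha^{-1})}\mathrm{tr}_{\mathrm{FD}(\alpha^{-1})}(D_1,\dots,D_n)$ in the sense of Definition~\ref{definition: traces}. \textbf{This is the step I expect to be the main obstacle.} Two points need care: that the relevant premap is $\alpha^{-1}$ rather than $\alpha$ --- the inverse arising because each $D_k$ sits between the outgoing $G$ and the incoming $G^{\ast}$, so the pairing sends $D$-outputs to $D$-inputs in the reverse sense --- and that the \emph{same} premap governs both the matrix-index trace and the quaternion-index real part, which holds because both index types of $D_k$ pass through the same $G$--$G^{\ast}$ pair. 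Finally I would reconcile the powers, distributing $N^{-n}$ from the normalisations, $2^{-n}$ from the covariances, the free $[N]$-index loops, and the free $[M]$-index loops (absorbed into the $M$-normalisation hidden inside $f$) into the factor $(2N)^{\#(\alpha)/2-n}$, and collect the signs from (\ref{formula: annoying sign}), from the off-diagonal $-\frac12$ covariances, and from the alternating quaternion structure into $\prod_{k:-k\in\mathrm{FD}(\alpha)}\eta_k\eta_{-k}$, exactly as in the sign analyses of the Ginibre and GSE computations.
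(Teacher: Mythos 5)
Your proposal follows essentially the same route as the paper's proof: expand each Wishart entry into $G$, $G^{\ast}$ (converted via (\ref{formula: annoying sign})), and $D_{k}$ factors, apply the Wick formula with the surviving pairings transported to $\mathrm{PM}\left(n\right)$ by Lemma~\ref{lemma: pairing bijection}, read off the $\iota,\eta$ constraints, and recognize the constrained sum over the internal $\left[M\right]$-indices and quaternion indices as a multiple of $\mathrm{Re}_{\mathrm{FD}\left(\alpha^{-1}\right)}\mathrm{tr}_{\mathrm{FD}\left(\alpha^{-1}\right)}\left(D_{1},\ldots,D_{n}\right)$, with the same sign bookkeeping you describe. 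One small point: with your labelling convention ($+k$ for the genuine $G$, $-k$ for the converted $G^{\ast}$) and $\alpha:=\delta\beta$, the position-matching constraint actually comes out as $\iota=\iota\circ\delta\alpha^{-1}$ and the $D$-contraction over $\mathrm{FD}\left(\alpha\right)$, so matching the stated form requires the harmless relabelling $\alpha\mapsto\alpha^{-1}$ of the summation variable (or defining $\alpha:=\beta\delta$) --- which is precisely the inverse phenomenon you correctly flagged as the delicate step.
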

\begin{remark}
We are using conventions for \(\varphi\), \(\alpha\), and \(K\left(\varphi,\alpha\right)\) such that, when they are thought of as the faces, hyperedges, and vertices of a map, they are all oriented counter-clockwise from within.  We have defined \(f\left(\alpha\right)\) to be consistent with these conventions.  This is why we have chosen to define it in a way that it is \(\alpha^{-1}\) and not \(\alpha\) which appears.  (See comment in Definition~\ref{definition: planar}).
\end{remark}
\begin{proof}
Expanding each entry of a Wishart matrix into entries of \(G^{\ast}\), \(D_{k}\), and \(G\) (and applying (\ref{formula: annoying sign}) to the \(G^{\ast}\) in each Wishart matrix), we get
\begin{multline*}
\mathbb{E}\left(W^{\left(1\right)}_{\iota_{1}\iota_{-1};\eta_{1}\eta_{-1}}\cdots W^{\left(n\right)}_{\iota_{n}\iota_{-n};\eta_{n}\eta_{-n}}\right)
\\=\frac{1}{N^{n}}\sum_{\substack{\kappa:\pm\left[n\right]\rightarrow\left[M\right]\\\theta:\pm\left[n\right]\rightarrow\left\{1,-1\right\}}}\mathbb{E}\left(\prod_{k=1}^{n}\eta_{k}\theta_{k}G_{\kappa_{k},\iota_{k};-\theta_{k},-\eta_{k}}D_{\kappa_{k},\kappa_{-k};\theta_{k},\theta_{-k}}G_{\kappa_{-k},\iota_{-k};\theta_{-k}\eta_{-k}}\right)\textrm{.}
\end{multline*}

For fixed \(\kappa\) and \(\theta\), we apply Lemma~\ref{lemma: Wick} to the Gaussian factors (indexing by elements of \(\mathrm{PM}\left(\pm\left[n\right]\right)\) instead of the pairings on \(\pm\left[n\right]\) using the bijection in Lemma~\ref{lemma: pairing bijection}):
\begin{multline*}
\mathbb{E}\left(\prod_{k\in\pm\left[n\right]}G_{\kappa_{k},\iota_{k};-\mathrm{sgn}\left(k\right)\theta_{k},-\mathrm{sgn}\left(k\right)\eta_{k}}\right)
\\=\sum_{\alpha\in\mathrm{PM}\left(n\right)}\prod_{k\in\mathrm{FD}\left(\alpha\right)}\mathbb{E}\left(G_{\kappa_{k},\iota_{k};-\mathrm{sgn}\left(k\right)\theta_{k},-\mathrm{sgn}\left(k\right)\eta_{k}}\right.\\\left.G_{\kappa_{\delta\alpha\left(k\right)},\iota_{\delta\alpha\left(k\right)};-\mathrm{sgn}\left(\delta\alpha\left(k\right)\right)\theta_{\delta\alpha\left(k\right)},-\mathrm{sgn}\left(\delta\alpha\left(k\right)\right)\eta_{\delta\alpha\left(k\right)}}\right)\textrm{.}
\end{multline*}
For a fixed \(\alpha\), the factor for a given \(k\) vanishes unless \(\iota_{k}=\iota_{\delta\alpha\left(k\right)}\) and \(\mathrm{sgn}\left(k\right)\eta_{k}=-\mathrm{sgn}\left(\delta\alpha\left(k\right)\right)\eta_{\delta\alpha\left(k\right)}\) (equivalent to the conditions on \(\alpha\) in the statement of the theorem) and in addition \(\kappa_{k}=\kappa_{\delta\alpha\left(k\right)}\) and \(\mathrm{sgn}\left(k\right)\theta_{k}=-\mathrm{sgn}\left(\delta\alpha\left(k\right)\right)\theta_{\delta\alpha\left(k\right)}\).  If so, the contribution of pair \(\left\{k,\delta\alpha\left(k\right)\right\}\) is \(\frac{1}{2}\theta_{k}\eta_{k}\), for a total contribution of \(2^{-n}\prod_{k\in\mathrm{FD}\left(\alpha\right)}\theta_{k}\eta_{k}\).

We substitute this expression back into the previous expression.  Considering the sign first, we have a contribution of \(\eta_{k}\theta_{k}\) for each \(k\in\left[n\right]\) and each \(k\in\mathrm{FD}\left(\alpha\right)\).  Multiplying, we have a nontrivial contribution for each \(k\in\pm\left[n\right]\) in exactly one of the two sets, that is, for \(k,-k\) where it is \(-\left|k\right|\in\mathrm{FD}\left(\alpha\right)\).  Reversing the order of summation, we get 
\begin{multline*}
\left(2N\right)^{-n}\sum_{\substack{\alpha\in\mathrm{PM}\left(\pm\left[n\right]\right)\\\iota=\iota\circ\delta\alpha\\\mathrm{sgn}\cdot\eta=\mathrm{sgn}\circ\alpha\cdot\eta\circ\delta\alpha}}\left[\prod_{\substack{k\in\left[n\right]\\-k\in\mathrm{FD}\left(\alpha\right)}}\eta_{k}\eta_{-k}\right]\\\sum_{\substack{\kappa:\pm\left[n\right]\rightarrow\left[M\right]\\\kappa=\kappa\circ\delta\alpha\\\theta:\pm\left[n\right]\rightarrow\left\{1,-1\right\}\\\mathrm{sgn}\cdot\theta=\mathrm{sgn}\circ\alpha\cdot\theta\circ\delta\alpha}}\left[\prod_{\substack{k\in\left[n\right]\\-k\in\mathrm{FD}\left(\alpha\right)}}\theta_{k}\theta_{-k}\right]\prod_{k=1}^{n}D_{\kappa_{k},\kappa_{-k};\theta_{k},\theta_{-k}}\textrm{.}
\end{multline*}
The inner sum is \(\left(2N\right)^{\#\left(\alpha\right)/2}\mathrm{Re}_{\mathrm{FD}\left(\alpha^{-1}\right)}\mathrm{tr}_{\mathrm{FD}\left(\alpha^{-1}\right)}\left(D_{1},\ldots,D_{n}\right)\).  The result follows.
\end{proof}

\section{Haar distribution on the symplectic matrices}
\label{section: Haar}

\subsection{Haar-distributed symplectic matrices}

\begin{definition}
Let \(\mathrm{Sp}\left(N\right)\) be the set of symplectic matrices, i.e.\ the set of matrices \(U\in M_{N\times N}\left(\mathbb{H}\right)\) such that \(U^{\ast}U=I_{N}\).

Such \(U\) form a compact group, so there is a finite Haar measure on \(\mathrm{Sp}\left(N\right)\).  Random matrix \(U:\Omega\rightarrow M_{N\times N}\left(\mathbb{H}\right)\) is a Haar-distributed symplectic matrix if its probability distribution is a Haar measure.
\end{definition}

For more details of the following construction, including a proof of Theorem~\ref{theorem: invariant basis}, see, e.g.\ \cite{MR0000255}, \cite{MR1606831}, Chapter~4.

Let \(V\) be the complex vector space with basis \(\left\{e_{\iota;\eta}:\iota\in\left[N\right],\eta\in\left\{1,-1\right\}\right\}\), and define inner product \(\left(\cdot,\cdot\right)\) by letting \(\left(e_{\iota_{1};\eta_{1}},e_{\iota_{2};\eta_{2}}\right)=\delta_{\iota_{1},\iota_{2}}\delta_{\eta_{1},\eta_{2}}\) and extending conjugate-linearly.  We extend this to in inner product on \(V^{\otimes n}\) by letting \(\left(v_{1}\otimes\cdots\otimes v_{n},w_{1}\otimes\cdots\otimes w_{n}\right)=\left(v_{1},w_{1}\right)\cdots\left(v_{n},w_{n}\right)\).

Let \(S_{n}\) act on \(V^{\otimes n}\) by \(\pi\left(v_{1}\otimes\cdots\otimes v_{n}\right)=v_{\pi\left(1\right)}\otimes\cdots\otimes v_{\pi\left(n\right)}\).  We note that \(\left(\pi\left(\omega_{1}\right),\pi\left(\omega_{2}\right)\right)=\left(\omega_{1},\omega_{2}\right)\).

\begin{theorem}
The subspace of \(V^{\otimes n}\) invariant under \(g^{\otimes n}\) (i.e., the set of \(\omega\in V^{\otimes n}\) such that \(g^{\otimes n}\omega=\omega\)) for all \(g\in\mathrm{Sp}\left(N\right)\) is \(\left\{0\right\}\) if \(n\) is odd, and for \(n\) even is spanned by the images of
\begin{equation}
\sum_{\substack{\iota:\left[n/2\right]\rightarrow\left[N\right]\\\eta:\left[n/2\right]\rightarrow\left\{1,-1\right\}}}\eta_{1}\cdots\eta_{n/2}\left(e_{\iota_{1};\eta_{1}}\otimes e_{\iota_{1};-\eta_{1}}\right)\otimes\cdots\otimes\left(e_{\iota_{n/2};\eta_{n/2}}\otimes e_{\iota_{n/2};-\eta_{n/2}}\right)
\label{formula: first invariant}
\end{equation}
under the action of \(S_{n}\).
\label{theorem: invariant basis}
\end{theorem}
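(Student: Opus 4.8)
The plan is to recognize Theorem~\ref{theorem: invariant basis} as the tensor form of the first fundamental theorem of invariant theory for the compact symplectic group, and to split the argument into the easy inclusion (the displayed tensors are invariant) and the hard inclusion (every invariant is a combination of them), disposing of the odd case first by a central element.

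First I would treat \(n\) odd. The scalar matrix \(-I_{N}\) is symplectic, since \(\left(-I_{N}\right)^{\ast}\left(-I_{N}\right)=I_{N}\), and it acts on \(V\) as multiplication by \(-1\), hence on \(V^{\otimes n}\) as multiplication by \(\left(-1\right)^{n}\). When \(n\) is odd this is \(-I\), so any invariant \(\omega\) satisfies \(\omega=-\omega\) and therefore vanishes; this settles the first assertion.

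Next, for \(n\) even, I would establish invariance of the spanning tensors. Writing \(\omega_{2}:=\sum_{\iota\in\left[N\right]}\left(e_{\iota;1}\otimes e_{\iota;-1}-e_{\iota;-1}\otimes e_{\iota;1}\right)\), the tensor (\ref{formula: first invariant}) is \(\omega_{2}\) inserted into the slot pairs \(\left(1,2\right),\ldots,\left(n-1,n\right)\), i.e.\ \(\omega_{2}^{\otimes n/2}\). The element \(\omega_{2}\) is precisely the inverse of the skew bilinear form preserved by \(\mathrm{Sp}\left(N\right)\), so \(\left(g\otimes g\right)\omega_{2}=\omega_{2}\) for \(g\in\mathrm{Sp}\left(N\right)\) is a direct computation from \(U^{\ast}U=I_{N}\) once written out in the \(2\times 2\) quaternion representation, where the adjoint condition encodes both unitarity and compatibility with the quaternionic structure. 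Since \(g^{\otimes n}\) commutes with the permutation action of \(S_{n}\) on the tensor slots, every \(S_{n}\)-image of \(\omega_{2}^{\otimes n/2}\) is again \(g^{\otimes n}\)-invariant, so the span of the orbit lies in the invariant subspace.

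The substance of the theorem is the reverse inclusion, that these matching tensors already span all of \(\left(V^{\otimes n}\right)^{\mathrm{Sp}\left(N\right)}\); this is where I expect the main obstacle, and I would approach it in one of two ways. The slick route is the unitarian trick: the \(g^{\otimes n}\)-invariants of the compact group \(\mathrm{Sp}\left(N\right)\) coincide with those of its complexification \(Sp\left(2N,\mathbb{C}\right)\) on the defining representation, whereupon the classical symplectic first fundamental theorem (Weyl) identifies the invariants with the span of products of the symplectic form over pairings, which is exactly the \(S_{n}\)-orbit above. The self-contained route is Schur--Weyl--Brauer duality: by complete reducibility the image of \(\mathrm{Sp}\left(N\right)\) in \(\mathrm{End}\left(V^{\otimes n}\right)\) and its commutant are mutual commutants, Brauer's surjectivity theorem shows the commutant is spanned by the operators attached to Brauer diagrams at parameter \(-2N=-\dim V\), and the invariants are recovered as the images of the cup diagrams (those pairing the \(n\) inputs among themselves), which are indexed exactly by the perfect matchings of \(\left[n\right]\). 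Either route yields the spanning statement. I note that linear independence of the matching tensors can fail once \(2N<n\), the second fundamental theorem supplying the Pfaffian relations, but this does not affect the claim, which asserts only that they span; and the averaging projector \(\int_{\mathrm{Sp}\left(N\right)}g^{\otimes n}\,dg\) onto the invariants, whose entries are the symplectic Weingarten function used in Section~\ref{section: Haar}, gives an explicit realization of this span.
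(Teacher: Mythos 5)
Your proposal is correct, and it coincides with the paper's own treatment: the paper does not prove Theorem~\ref{theorem: invariant basis} but explicitly defers it to Weyl \cite{MR0000255} and Goodman--Wallach \cite{MR1606831}, Chapter~4, and your argument is exactly the standard first-fundamental-theorem proof found in those references (the odd case killed by the central symplectic element \(-I_{N}\), the easy inclusion by invariance of the dual symplectic bivector so that the spanning tensor is \(\omega_{2}^{\otimes n/2}\), and the hard spanning direction by Weyl's FFT via the unitarian trick or equivalently by Brauer--Schur--Weyl duality at parameter \(-2N\)). Like the paper, you ultimately cite rather than reprove the spanning direction, which is the appropriate level of detail, and your remark that the matching tensors need not be linearly independent when \(2N<n\) correctly notes that this does not affect the statement.
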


We note that the images of (\ref{formula: first invariant}) under elements of \(B_{n/2}\) are linearly dependent with (\ref{formula: first invariant}), and thus images under elements of the same (left) coset of \(B_{n/2}\) are linearly dependent.  Thus we may choose spanning vectors of the invariant subspace indexed by \({\cal P}_{2}\left(n\right)\), images of (\ref{formula: first invariant}) under a permutation from each coset of \(B_{n/2}\), where cosets may be identified with the image of pairing \(\left\{\left\{1,2\right\},\ldots,\left\{n-1,n\right\}\right\}\) under permutations in that coset (so it is the \(k\)th and \(l\)th tensor factor which are ``entangled'').

We note further \(B_{n/2}\) contains both even and odd permutations, and that an odd permutation of \(B_{n/2}\) multiplies vector (\ref{formula: first invariant}) by \(-1\) while an even permutation does not change its value.  Thus each coset of \(B_{n/2}\) contains even and odd permutations, which map (\ref{formula: first invariant}) to a vector and its additive inverse respectively.

\begin{definition}
For \(\pi\in{\cal P}_{2}\left(n\right)\), let \(e_{\pi}\) be the image of (\ref{formula: first invariant}) under an even permutation such that paired elements of \(\pi\) are entangled.
\end{definition}

\begin{lemma}
For \(\pi_{1},\pi_{2}\in{\cal P}_{2}\left(n\right)\),
\[\left(e_{\pi_{1}},e_{\pi_{2}}\right)=\left(-1\right)^{n/2}\left(-2N\right)^{\#\left(\pi_{1}\vee\pi_{2}\right)}\textrm{.}\]
\label{lemma: inner product}
\end{lemma}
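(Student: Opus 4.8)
The plan is to write each \(e_{\pi_i}\) as a permutation image of the fixed invariant vector \(\Omega_0\) given by (\ref{formula: first invariant}), to choose those permutations by Lemma~\ref{lemma: parity} so that the coefficient signs cancel, and to read off the remaining sign from the parity statement of that same lemma.

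First I would record how \(\Omega_0\) transforms under \(S_n\). Writing \(\Omega_0=\sum_\phi c(\phi)\,e_\phi\), where \(\phi\) runs over the slot-labelings admissible for the standard pairing \(P_0=\{\{1,2\},\ldots,\{n-1,n\}\}\) (equal \(\iota\)-index and opposite \(\eta\)-sign on each pair), \(e_\phi\) is the corresponding basis tensor, and \(c(\phi)=\prod_m\eta_\phi(2m-1)\) is the product of the \(\eta\)'s over the distinguished (odd) slots, the text's observation that even elements of \(B_{n/2}\) fix \(\Omega_0\) while odd ones negate it — combined with \(\mathrm{sgn}(\sigma_0 h)=\mathrm{sgn}(h)\) for even \(\sigma_0\) — yields the uniform relation \(\sigma\Omega_0=\mathrm{sgn}(\sigma)\,e_{\sigma(P_0)}\) for every \(\sigma\in S_n\). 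Hence for any \(\sigma_i\) with \(\sigma_i(P_0)=\pi_i\),
\[(e_{\pi_1},e_{\pi_2})=\mathrm{sgn}(\sigma_1)\mathrm{sgn}(\sigma_2)\,(\sigma_1\Omega_0,\sigma_2\Omega_0)=\mathrm{sgn}(\sigma_2\sigma_1^{-1})\,(\sigma_1\Omega_0,\sigma_2\Omega_0),\]
since \(\mathrm{sgn}\) is a \(\pm1\)-valued, conjugation-invariant homomorphism.

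The decisive move is to pick \(\sigma_1,\sigma_2\) as in Lemma~\ref{lemma: parity}, so that both send the odd slots \(\{1,3,\ldots,n-1\}\) onto the \emph{same} transversal \(S=\{k>0:-k\in\mathrm{FD}(\pi_2\delta\pi_1)\}\). Reparametrizing, \(\sigma_i\Omega_0=\sum_\Phi c_i(\Phi)\,e_\Phi\), where \(\Phi\) runs over labelings admissible for \(\pi_i\) and \(c_i(\Phi)=\prod_{k\in S}\eta_\Phi(k)\) uses the \emph{same} distinguished set \(S\) for both \(i\). A basis tensor \(e_\Phi\) survives the pairing defining the inner product only when \(\Phi\) is admissible for both \(\pi_1\) and \(\pi_2\), and for such \(\Phi\) the two coefficients coincide, so \(c_1(\Phi)c_2(\Phi)=\prod_{k\in S}\eta_\Phi(k)^2=1\). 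This is exactly why the common-transversal choice is worth making: it annihilates the sign carried by the coefficients and leaves a pure count.

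It then remains to count the labelings \(\Phi\) admissible for both pairings. The \(\iota\)-indices must be constant on each block of \(\pi_1\vee\pi_2\) and the \(\eta\)-signs must flip along every \(\pi_1\)- and \(\pi_2\)-pair; by Lemma~\ref{lemma: pairings} each block of \(\pi_1\vee\pi_2\) is a single even-length alternating cycle, so the sign assignment is consistent and admits exactly two solutions per block, while the index assignment admits \(N\) per block. Thus \((\sigma_1\Omega_0,\sigma_2\Omega_0)=(2N)^{\#(\pi_1\vee\pi_2)}\), and combining this with \(\mathrm{sgn}(\sigma_2\sigma_1^{-1})=(-1)^{n/2-\#(\pi_1\vee\pi_2)}\) from Lemma~\ref{lemma: parity} gives \((e_{\pi_1},e_{\pi_2})=(-1)^{n/2}(-2N)^{\#(\pi_1\vee\pi_2)}\), as claimed. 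The main obstacle I anticipate is the sign bookkeeping: one must check that the distinguished-slot coefficient really transports to \(\prod_{k\in S}\eta_\Phi(k)\) under each \(\sigma_i\) and that the entanglement conventions make the even-coset representative \(e_\pi\) well defined. Once the common transversal of Lemma~\ref{lemma: parity} is in hand, the coefficient cancellation is automatic and the remainder is a routine count.
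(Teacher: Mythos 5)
Your proposal is correct and follows essentially the same route as the paper's proof: both use Lemma~\ref{lemma: parity} to choose \(\sigma_{1},\sigma_{2}\) sending the odd (distinguished) slots to a common transversal, so that the coefficient signs square away, then count \(N\) index choices and \(2\) sign choices per block of \(\pi_{1}\vee\pi_{2}\), and finally import the sign \(\left(-1\right)^{n/2-\#\left(\pi_{1}\vee\pi_{2}\right)}\) from the parity statement of that lemma. The only cosmetic difference is that you compute \(\left(\sigma_{1}\Omega_{0},\sigma_{2}\Omega_{0}\right)\) while the paper computes \(\left(e_{\pi_{1}},\sigma_{2}\sigma_{1}^{-1}\left(e_{\pi_{1}}\right)\right)\), which agree by the \(S_{n}\)-invariance of the inner product.
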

\begin{proof}
For \(\sigma_{1},\sigma_{2}\in S_{n}\) satisfying the hypotheses of Lemma~\ref{lemma: parity}, \(\sigma_{2}\sigma_{1}^{-1}\left(e_{\pi_{1}}\right)=\left(-1\right)^{n/2-\#\left(\pi_{1}\vee\pi_{2}\right)}e_{\pi_{2}}\).  In inner product \(\left(e_{\pi_{1}},\sigma_{2}\sigma_{1}^{-1}\left(e_{\pi_{2}}\right)\right)\), there is a contribution for each \(\iota_{1},\ldots,\iota_{n}\) and \(\eta_{1},\ldots,\eta_{n}\) such that a multiple of \(e_{\iota_{1};\eta_{1}}\otimes\cdots\otimes e_{\iota_{n};\eta_{n}}\) appears in both \(e_{\pi_{1}}\) and \(\sigma_{2}\sigma_{1}^{-1}\left(e_{\pi_{1}}\right)\).  The sign factors \(\eta_{k}\) in \(e_{\pi_{1}}\) and \(\sigma_{2}\sigma_{1}^{-1}\left(e_{\pi_{1}}\right)\) appear with the same indices in both vectors (i.e. the images of the odd numbers), so the sign of any term in their inner product is \(1\).

In such a term, the value of \(\iota\) must be constant on orbits of \(\pi_{i}\), \(i=1,2\), so there are \(N\) possible values of \(\iota\) for each block of \(\pi_{1}\vee\pi_{2}\).  The value of \(\eta\) within a block of \(\pi_{1}\vee\pi_{2}\) is determined by its value on any point: \(\eta_{\pi_{i}\left(k\right)}=-\eta_{k}\), so it is constant on orbits of \(\pi_{1}\pi_{2}\) and has opposite values on the two orbits which form a block of \(\pi_{1}\vee\pi_{2}\); there are thus \(2\) possible values \(\eta\) for each block of \(\pi_{1}\vee\pi_{2}\).  The result follows.
\end{proof}

The following construction of the Weingarten function and the monomial integration formula follows the proofs from \cite{MR2567222}.  See also \cite{MR1959915, MR2217291}.

\begin{lemma}[Collins, Matsumoto]
In a space with inner product \(\left(\cdot,\cdot\right)\), let \(v\) be a vector, \(P\) an orthogonal projection, and \(\left\{v_{1},\ldots,v_{l}\right\}\) a set spanning the image of \(P\).  Define \(\mathrm{Gr}\in M_{n\times n}\left(\mathbb{C}\right)\) by \(\mathrm{Gr}_{ij}=\left(v_{i},v_{j}\right)\), and let \(W\) be a symmetric real matrix satisfying \(\mathrm{Gr}\cdot W\cdot\mathrm{Gr}=I_{n}\) (such as the inverse or pseudoinverse).  Let \(x\in\mathbb{C}^{l}\) with components \(x^{i}=\left(v,v_{i}\right)\) and \(y=Wx\).  Then \(P\left(v\right)=\sum_{i=1}^{l}y^{i}v_{i}\).
\label{lemma: Euclidean exercise}
\end{lemma}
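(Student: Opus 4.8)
The plan is to verify directly that the vector \(v' := \sum_{i=1}^{l} y^{i} v_{i}\) coincides with \(P(v)\) by checking the two defining properties of the orthogonal projection onto \(\mathrm{im}(P) = \mathrm{span}\{v_{1},\ldots,v_{l}\}\): that \(v'\) lies in this subspace, and that \(v - v'\) is orthogonal to it. The first is immediate, since \(v'\) is by construction a linear combination of the spanning set \(\{v_{i}\}\). For the second, because the \(v_{j}\) span \(\mathrm{im}(P)\) it suffices to show \((v-v',v_{j})=0\), i.e.\ \((v',v_{j})=(v,v_{j})=x^{j}\), for each \(j\). (I take the inner product to be linear in its first argument; the conjugations arising from the other convention are harmless, since, as in Lemma~\ref{lemma: inner product}, the Gram matrix \(\mathrm{Gr}\) is real symmetric and \(W\) is real.)

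First I would rewrite \((v',v_{j})\) as a matrix product: expanding, \((v',v_{j})=\sum_{i} y^{i}(v_{i},v_{j})=\sum_{i} y^{i}\,\mathrm{Gr}_{ij}=(\mathrm{Gr}\,y)_{j}\), using the symmetry \(\mathrm{Gr}_{ij}=\mathrm{Gr}_{ji}\). Since \(y=Wx\), the entire claim reduces to the single vector identity \(\mathrm{Gr}\,W\,x=x\) in \(\mathbb{C}^{l}\).

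The crucial observation is that \(x\) lies in the range of \(\mathrm{Gr}\). Indeed, decomposing \(v=P(v)+(v-P(v))\) with \(v-P(v)\perp\mathrm{im}(P)\) gives \(x^{j}=(v,v_{j})=(P(v),v_{j})\); writing \(P(v)=\sum_{k} c^{k} v_{k}\) (possible since \(P(v)\in\mathrm{im}(P)\)) then yields \(x^{j}=\sum_{k} c^{k}\,\mathrm{Gr}_{kj}=(\mathrm{Gr}\,c)_{j}\), so \(x=\mathrm{Gr}\,c\). Consequently \(\mathrm{Gr}\,W\,x=\mathrm{Gr}\,W\,\mathrm{Gr}\,c=\mathrm{Gr}\,c=x\), where the middle equality is exactly the hypothesis on \(W\). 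This proves the identity and hence the lemma. I note that the hypothesis one actually uses is the generalized-inverse relation \(\mathrm{Gr}\,W\,\mathrm{Gr}=\mathrm{Gr}\), which both the inverse and the Moore--Penrose pseudoinverse satisfy (the case \(\mathrm{Gr}\,W\,\mathrm{Gr}=I\) forces \(\mathrm{Gr}\) invertible, and then \(x\in\mathrm{range}(\mathrm{Gr})\) trivially).

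The only real obstacle is the possibility that the spanning set \(\{v_{i}\}\) is linearly dependent, so that \(\mathrm{Gr}\) is singular and no genuine inverse exists — precisely the situation the pseudoinverse is introduced to handle, and the reason the \(e_{\pi}\) of Lemma~\ref{lemma: inner product} cannot be assumed independent. The containment \(x\in\mathrm{range}(\mathrm{Gr})\) is what lets the weak identity \(\mathrm{Gr}\,W\,\mathrm{Gr}=\mathrm{Gr}\) replace \(\mathrm{Gr}\,W=I\). An equivalent and perhaps cleaner route is to introduce \(A\colon\mathbb{C}^{l}\to V\), \(Ac=\sum_{i} c^{i}v_{i}\), observe that \(\mathrm{Gr}=A^{*}A\) and \(x=A^{*}v\), and verify that \(AWA^{*}\) equals the projection \(A\mathrm{Gr}^{+}A^{*}\) using \(A\mathrm{Gr}^{+}\mathrm{Gr}=A\); I would present the elementary orthogonality argument above as the main proof and mention this operator form as a remark.
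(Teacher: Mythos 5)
The paper gives you nothing to compare against here: this lemma is stated with an attribution to Collins and Matsumoto and no proof at all (the surrounding text only says the construction ``follows the proofs'' of the cited references). Your argument therefore stands on its own, and it is correct. Membership of \(v'=\sum_{i}y^{i}v_{i}\) in \(\mathrm{im}(P)\) is immediate; orthogonality of \(v-v'\) to \(\mathrm{im}(P)\) reduces, via the symmetry of \(\mathrm{Gr}\), to the single identity \(\mathrm{Gr}\,W\,x=x\); and your key step --- that \(x\in\mathrm{range}(\mathrm{Gr})\), obtained from \(x^{j}=(P(v),v_{j})\) and expanding \(P(v)\) in the spanning set --- is exactly what makes the generalized-inverse identity \(\mathrm{Gr}\,W\,\mathrm{Gr}=\mathrm{Gr}\) sufficient. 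You are also right that the hypothesis as printed, \(\mathrm{Gr}\cdot W\cdot\mathrm{Gr}=I_{n}\), must be read as \(\mathrm{Gr}\cdot W\cdot\mathrm{Gr}=\mathrm{Gr}\): the printed version forces \(\mathrm{Gr}\) to be invertible, which would make the allowance for a pseudoinverse pointless, and in the intended application the vectors \(e_{\pi}\) of Lemma~\ref{lemma: inner product} do become linearly dependent when \(n\) is large compared with \(N\), so \(\mathrm{Gr}\) is genuinely singular there. Spotting and repairing that is a strength of your write-up, and the closing operator-theoretic remark (\(\mathrm{Gr}=A^{\ast}A\), \(x=A^{\ast}v\), \(\mathrm{range}(A^{\ast})=\mathrm{range}(\mathrm{Gr})\)) is a legitimate restatement of the same mechanism.

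One imprecision, harmless in context but worth correcting: you dismiss the opposite inner-product convention on the grounds that \(\mathrm{Gr}\) and \(W\) are real. That is not the right reason. If \(\left(\cdot,\cdot\right)\) is conjugate-linear in its \emph{first} argument, the formula as stated can fail even with real \(\mathrm{Gr}\) and \(W\): take \(V=\mathbb{C}\), \(P\) the identity, \(v_{1}=1\), \(v=i\); then \(x^{1}=-i\), \(y^{1}=-i\), and \(\sum_{i}y^{i}v_{i}=-i\neq i=P(v)\). What actually makes the convention immaterial in the paper's application is that \(v\) and the \(v_{i}=e_{\pi}\) all have real coordinates in a common orthonormal basis, so the vector \(x\) itself is real; reality of \(\mathrm{Gr}\) and \(W\) alone does not suffice.
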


\begin{definition}
If \(\pi\in{\cal P}\left(2n\right)\) has blocks of size \(2\lambda_{1},\ldots,2\lambda_{k}\) with \(\lambda_{1}\geq\cdots\geq\lambda_{k}\), let \(\Lambda\left(\pi\right)\) be the integer partition with parts \(\left(\lambda_{1},\ldots,\lambda_{k}\right)\).  By extension, we can define \(\Lambda\) on a permutation with even cycles (such as the join of pairings or an alternating permutation) by taking its value on the partition of its orbits.
\end{definition}

\begin{definition}
For \(\pi_{+},\pi_{-}\in{\cal P}_{2}\left(n\right)\), we let
\[\mathrm{Gr}\left(\pi_{+},\pi_{-}\right):=\left(e_{\pi_{+}},e_{\pi_{-}}\right)=\left(-1\right)^{n/2}\left(-2N\right)^{\#\left(\pi_{+}\vee\pi_{-}\right)}\textrm{.}\]
The (symplectic) Weingarten function is defined as the inverse (or pseudoinverse) of \(\mathrm{Gr}\), and the entry corresponding to \(\pi_{+},\pi_{-}\) will be denoted \(\mathrm{Wg}\left(\pi_{+},\pi_{-}\right)\) (we will usually suppress \(n\) in the notation, since it will typically be clear from context).

Since the Weingarten function depends only on the sizes of the blocks of \(\pi_{+}\vee\pi_{-}\), we define the Weingarten function of a integer partition \(\lambda\) by \(\mathrm{Wg}\left(\lambda\right):=\mathrm{Wg}\left(\pi_{+},\pi_{-}\right)\) for any \(\pi_{+},\pi_{-}\) with \(\lambda=\Lambda\left(\pi_{+}\vee\pi_{-}\right)\).

We define the normalized Weingarten function
\[\mathrm{wg}\left(\pi_{+},\pi_{-}\right):=\left(-2N\right)^{n-\#\left(\pi_{+}\vee\pi_{-}\right)}\mathrm{Wg}\left(\pi_{+},\pi_{-}\right)\textrm{,}\]
\label{definition: Weingarten function}
and define the normalized Weingarten function on partitions as above.
\end{definition}

\begin{remark}
In this paper, we will always refer to the symplectic Weingarten function unless otherwise stated.  We note that \(\mathrm{Wg}^{\mathrm{Sp}\left(N\right)}=\left(-1\right)^{n/2}\mathrm{Wg}^{O\left(-2N\right)}\) where \(\mathrm{Wg}^{O\left(-2N\right)}\) is the orthogonal Weingarten evaluated at \(-2N\).  See \cite{MR2217291, MR2567222} for tables of values.
\end{remark}

\begin{proposition}[Collins, \'{S}niady]
We may express the Weingarten function:
\begin{multline*}
\mathrm{Wg}\left(\pi_{+},\pi_{-}\right)
\\=\left(2N\right)^{-n/2}\sum_{k\geq 0}\left(-1\right)^{k}\sum_{\substack{\pi_{0},\ldots,\pi_{k}\in{\cal P}_{2}\left(n\right)\\\pi_{0}\neq\pi_{1}\neq\ldots\neq\pi_{k}\\\pi_{0}=\pi_{+},\pi_{k}=\pi_{-}}}\left(-2N\right)^{-\left(d\left(\pi_{0},\pi_{1}\right)+\cdots+d\left(\pi_{k-1},\pi_{k}\right)\right)/2}\textrm{.}
\end{multline*}
\end{proposition}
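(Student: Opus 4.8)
The plan is to realize \(\mathrm{Gr}\) as a constant multiple of \(I+T\), where \(T\) collects the (small) off-diagonal entries, and then to expand the inverse as a Neumann series, reading off the stated sum from the powers of \(T\).

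First I would record how the distance \(d\) controls the Gram entries. By Lemma~\ref{lemma: pairings}, \(\#\left(\pi_{+}\vee\pi_{-}\right)=\#\left(\pi_{+}\pi_{-}\right)/2\), and since each \(\pi_{\pm}\) is an involution the distance is \(d\left(\pi_{+},\pi_{-}\right)=n-\#\left(\pi_{+}\pi_{-}\right)=n-2\#\left(\pi_{+}\vee\pi_{-}\right)\); equivalently \(\#\left(\pi_{+}\vee\pi_{-}\right)=n/2-d\left(\pi_{+},\pi_{-}\right)/2\). Substituting this into the definition of \(\mathrm{Gr}\) in Definition~\ref{definition: Weingarten function} and using that \(n\) is even, \(\mathrm{Gr}\left(\pi_{+},\pi_{-}\right)=\left(-1\right)^{n/2}\left(-2N\right)^{n/2-d/2}=\left(2N\right)^{n/2}\left(-2N\right)^{-d\left(\pi_{+},\pi_{-}\right)/2}\). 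In particular the diagonal entries are all \(\left(2N\right)^{n/2}\), the case \(d=0\).

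Next I would factor out this common diagonal value, writing \(\mathrm{Gr}=\left(2N\right)^{n/2}\left(I+T\right)\), where \(T_{\pi_{+},\pi_{-}}=\left(-2N\right)^{-d\left(\pi_{+},\pi_{-}\right)/2}\) off the diagonal and \(T_{\pi,\pi}=0\). Distinct pairings satisfy \(\#\left(\pi_{+}\vee\pi_{-}\right)\leq n/2-1\), so every off-diagonal distance is at least \(2\) and \(\left|T_{\pi_{+},\pi_{-}}\right|\leq\left(2N\right)^{-1}\). Since there are only finitely many pairings, for \(N\) large enough \(T\) has operator norm below \(1\), and the Neumann series converges: \(\mathrm{Gr}^{-1}=\left(2N\right)^{-n/2}\sum_{k\geq 0}\left(-1\right)^{k}T^{k}\). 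I would then expand the \(k\)-fold matrix product: \(\left(T^{k}\right)_{\pi_{+},\pi_{-}}\) is a sum over sequences \(\pi_{0}=\pi_{+},\pi_{1},\ldots,\pi_{k}=\pi_{-}\), with consecutive terms forced to be distinct by the vanishing diagonal of \(T\), each contributing \(\prod_{i=1}^{k}\left(-2N\right)^{-d\left(\pi_{i-1},\pi_{i}\right)/2}\). Collecting the exponents reproduces exactly the claimed formula for \(\mathrm{Wg}\left(\pi_{+},\pi_{-}\right)=\mathrm{Gr}^{-1}\left(\pi_{+},\pi_{-}\right)\).

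The main obstacle is the transition from large \(N\) to all \(N\), because for small \(N\) the Gram matrix degenerates and \(\mathrm{Wg}\) is defined as a pseudoinverse. I expect to resolve this by noting that each entry of \(\mathrm{Wg}\) is a rational function of \(N\), as is the right-hand side once the series is summed for large \(\left|N\right|\); agreeing on an infinite set of values, the two rational functions must coincide. Alternatively, one simply restricts attention to the nondegenerate regime where \(\mathrm{Gr}\) is genuinely invertible, which is the case relevant to the matrix-integral applications and is the setting used in \cite{MR2567222}.
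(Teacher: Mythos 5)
Your proposal is correct and takes essentially the same route as the paper: the paper likewise writes \(\mathrm{Wg}=\left(2N\right)^{-n/2}\left(\left(2N\right)^{-n/2}\mathrm{Gr}\right)^{-1}\), notes that \(\left(2N\right)^{-n/2}\mathrm{Gr}-I\) has zero diagonal and off-diagonal entries \(\left(-2N\right)^{-d\left(\pi_{+},\pi_{-}\right)/2}\), and expands the alternating Neumann series, reading off the paths of distinct consecutive pairings from the matrix powers. If anything, your handling of convergence (operator norm bound for large \(N\), then extension by a rational-function identity or restriction to the nondegenerate regime) is more careful than the paper's one-line justification, which only remarks that the off-diagonal entries are smaller than \(1\) in absolute value.
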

\begin{proof}
We have:
\begin{multline*}
\mathrm{Wg}=\left(2N\right)^{-n/2}\left(\left(2N\right)^{-n/2}\mathrm{Gr}\right)^{-1}
\\=\left(2N\right)^{-n/2}\left[I_{N}-\left(\left(2N\right)^{-n/2}\mathrm{Gr}-I_{N}\right)+\left(\left(2N\right)^{-n/2}\mathrm{Gr}-I_{N}\right)^{2}-\cdots\right]\textrm{.}
\end{multline*}
The diagonal entries of \(\left(2N\right)^{-n/2}\mathrm{Gr}-I_{N}\) are zero and the off-diagonal entry associated with \(\pi_{+},\pi_{-}\) is \(\left(-2N\right)^{-d\left(\pi_{+},\pi_{-}\right)/2}<1\), so the infinite sum converges.  The result follows.
\end{proof}

\begin{remark}
From \cite{MR2217291}, the normalized Weingarten function
\[\mathrm{wg}\left(\lambda\right)=\prod_{k=1}^{l\left(\lambda\right)}\left(-1\right)^{\lambda_{k}-1}C_{\lambda_{k}-1}+O\left(\frac{1}{N}\right)\]
where \(C_{k}:=\frac{1}{k+1}\binom{2k}{k}\) is the \(k\)th Catalan number.
\label{remark: Catalan}
\end{remark}

\begin{lemma}
The map \(\left(\pi_{+},\pi_{-}\right)\mapsto\pi_{-}\delta\pi_{+}\) is a bijection from \({\cal P}_{2}\left(n\right)^{2}\) to \(\mathrm{PM}\left(n\right)\cap S_{\mathrm{alt}}\left(\pm\left[n\right]\right)\).
\label{lemma: pair of pairings}
\end{lemma}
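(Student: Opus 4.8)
The plan is to factor the map \((\pi_{+},\pi_{-})\mapsto\pi_{-}\delta\pi_{+}\) through the bijection of Lemma~\ref{lemma: pairing bijection}, which identifies \({\cal P}_{2}(\pm[n])\) with \(\mathrm{PM}(n)\) via \(\pi\mapsto\delta\pi\). Throughout I read a pairing in \({\cal P}_{2}(n)\) as a permutation of \(\pm[n]\) that fixes every negative integer, so that for \(k>0\) one has \(\pi_{-}\delta\pi_{+}(k)=-\pi_{+}(k)\) (the negative integer \(\delta\pi_{+}(k)\) being fixed by \(\pi_{-}\)), while for \(k<0\), writing \(k=-m\), one has \(\pi_{-}\delta\pi_{+}(-m)=\pi_{-}(m)\). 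First I would introduce the intermediate object \(\pi:=\pi_{+}\cdot\delta\pi_{-}\delta\), the permutation acting as \(\pi_{+}\) on the positive integers and as the mirror image \(\delta\pi_{-}\delta\) on the negative integers. Since these two factors have disjoint support and each is a fixed-point-free involution, \(\pi\) is a pairing of \(\pm[n]\) in which every pair lies entirely within \([n]\) or entirely within \(-[n]\); I call such pairings \emph{sign-respecting}. (If \(n\) is odd there are no sign-respecting pairings and \({\cal P}_{2}(n)^{2}\) is empty, so one may assume \(n\) even.)

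The first step is to observe that \((\pi_{+},\pi_{-})\mapsto\pi_{+}\cdot\delta\pi_{-}\delta\) is a bijection from \({\cal P}_{2}(n)^{2}\) onto the set of sign-respecting pairings of \(\pm[n]\): the restriction \(\left.\pi\right|_{[n]}\) recovers \(\pi_{+}\), and the restriction to \(-[n]\), conjugated back by \(\delta\), recovers \(\pi_{-}\), so the map is inverted by \(\pi\mapsto(\left.\pi\right|_{[n]},\delta\left.\pi\right|_{-[n]}\delta)\). The second step is to compose with \(\pi\mapsto\delta\pi\). A direct check on the two sign cases shows \(\delta\pi=\pi_{-}\delta\pi_{+}\): for \(k>0\), \(\delta\pi(k)=\delta\pi_{+}(k)=-\pi_{+}(k)\), and for \(k=-m<0\), \(\pi(-m)=-\pi_{-}(m)\) gives \(\delta\pi(-m)=\pi_{-}(m)\); these agree with the values of \(\pi_{-}\delta\pi_{+}\) recorded above. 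Hence the composition of the two maps is exactly the map of the lemma. By Lemma~\ref{lemma: pairing bijection}, \(\pi\mapsto\delta\pi\) is a bijection \({\cal P}_{2}(\pm[n])\to\mathrm{PM}(n)\), and restricting a bijection to any subset yields a bijection onto its image; so it remains only to identify the image of the sign-respecting pairings as \(\mathrm{PM}(n)\cap S_{\mathrm{alt}}(\pm[n])\).

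That last identification is where the sign bookkeeping must be done carefully, and I expect it to be the crux. The permutation \(\delta\pi\) is alternating exactly when \(\delta\pi(k)\) has sign opposite to \(k\) for every \(k\), i.e.\ exactly when \(\pi(k)\) has the \emph{same} sign as \(k\) for every \(k\) — which is precisely the sign-respecting condition. Thus \(\pi\) is sign-respecting if and only if \(\delta\pi\in S_{\mathrm{alt}}(\pm[n])\), and since \(\delta\pi\in\mathrm{PM}(n)\) automatically, the image of the sign-respecting pairings is exactly \(\mathrm{PM}(n)\cap S_{\mathrm{alt}}(\pm[n])\). Composing the two bijections then gives the claim. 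As an equivalent, more computational route one can bypass the intermediate pairing and write the inverse directly: given \(\alpha\in\mathrm{PM}(n)\cap S_{\mathrm{alt}}(\pm[n])\), set \(\pi_{+}(k):=\delta\alpha(k)\) and \(\pi_{-}(k):=\alpha\delta(k)\) for \(k\in[n]\). The premap identity \(\alpha\delta\alpha=\delta\) (equivalent to \(\delta\alpha\delta=\alpha^{-1}\)) makes each of \(\pi_{+},\pi_{-}\) an involution of \([n]\), the alternating property of \(\alpha\) keeps their values in \([n]\), fixed-point-freeness follows from the defining property of \(\mathrm{PM}(n)\) that no \(k\) and \(-k\) share a cycle, and a two-case check confirms \(\pi_{-}\delta\pi_{+}=\alpha\).
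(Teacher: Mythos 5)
Your proof is correct and follows essentially the same route as the paper: your intermediate ``sign-respecting'' pairing \(\pi_{+}\cdot\delta\pi_{-}\delta\) is exactly the paper's \(\delta\pi_{-}\delta\pi_{+}\), and both arguments combine Lemma~\ref{lemma: pairing bijection} with the observation that \(\delta\pi\) is alternating precisely when \(\pi\) preserves sign. Your write-up merely makes explicit the two-case verifications and the inverse map that the paper leaves implicit.
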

\begin{proof}
We note that \(\delta\pi_{-}\delta\pi_{+}\) consists of a pairing \(\pi_{+}\) on \(\left[n\right]\) and a pairing \(\delta\pi_{-}\delta\) on \(-\left[n\right]\), so under the bijection in Lemma~\ref{lemma: pairing bijection}, this map is injective.  Since \(\delta\pi_{-}\delta\pi_{+}\) preserves the sign, \(\pi_{-}\delta\pi_{+}\) is alternating, and conversely if \(\alpha\in\mathrm{PM}\left(n\right)\) is alternating, then \(\delta\alpha\) is a pairing preserving sign, i.e.\ of the form \(\delta\pi_{-}\delta\pi_{+}\) for \(\pi_{+},\pi_{-}\in{\cal P}_{2}\left(n\right)\).
\end{proof}

\begin{lemma}[Collins, \'{S}niady]
Let \(U\) be a Haar-distributed symplectic matrix.  Then
\begin{multline*}
\mathbb{E}\left(U_{\iota_{1}\iota_{-1};\eta_{1}\eta_{-1}}\cdots U_{\iota_{n}\iota_{-n};\eta_{n}\eta_{-n}}\right)\\=\sum_{\substack{\alpha\in\mathrm{PM}\left(n\right)\\\iota=\iota\circ\delta\alpha\\\mathrm{sgn}\cdot\eta=\mathrm{sgn}\circ\alpha\cdot\eta\circ\delta\alpha}}\left(2N\right)^{\#\left(\alpha\right)/2-n}\left[\prod_{\substack{k\in\left[n\right]\\-k\in\mathrm{FD}\left(\alpha\right)}}\eta_{k}\eta_{-k}\right]f\left(\alpha\right)
\end{multline*}
where \(f:\mathrm{PM}\left(n\right)\rightarrow\mathbb{C}\) is given by
\[f=\left\{\begin{array}{ll}\mathrm{wg}\left(\Lambda\left(\mathrm{FD}\left(\alpha\right)\right)\right)\textrm{,}&\alpha\in S_{\mathrm{alt}}\left(n\right)\\0\textrm{,}&\textrm{otherwise}\end{array}\right.\textrm{.}\]
\end{lemma}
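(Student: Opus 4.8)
The plan is to run the symplectic Weingarten calculus, realizing the monomial expectation as a single matrix entry of the orthogonal projection onto the $\mathrm{Sp}(N)$-invariants of $V^{\otimes n}$. Averaging $g^{\otimes n}$ over Haar measure produces exactly the projection $P$ onto this invariant subspace, and $P$ is orthogonal because the symplectic action is unitary. When $n$ is odd Theorem~\ref{theorem: invariant basis} gives $P=0$, matching the claim since $\mathrm{PM}(n)$ contains no alternating permutations (equivalently $f\equiv 0$) in that case, so I may assume $n$ even. Writing $w_+:=e_{\iota_1;\eta_1}\otimes\cdots\otimes e_{\iota_n;\eta_n}$ and $w_-:=e_{\iota_{-1};\eta_{-1}}\otimes\cdots\otimes e_{\iota_{-n};\eta_{-n}}$, each factor $U_{\iota_k\iota_{-k};\eta_k\eta_{-k}}$ is the corresponding entry of the matrix representing $U$, so the product of the entries is the matrix entry of $U^{\otimes n}$ between $w_-$ and $w_+$, whose expectation is $(w_+,Pw_-)$.

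First I would apply Lemma~\ref{lemma: Euclidean exercise} with spanning set $\{e_\pi:\pi\in{\cal P}_2(n)\}$ (Theorem~\ref{theorem: invariant basis}) and Weingarten matrix $\mathrm{Wg}$ (Definition~\ref{definition: Weingarten function}), giving $Pw_-=\sum_{\pi_+,\pi_-}\mathrm{Wg}(\pi_+,\pi_-)(w_-,e_{\pi_-})e_{\pi_+}$ and hence
\[(w_+,Pw_-)=\sum_{\pi_+,\pi_-\in{\cal P}_2(n)}\mathrm{Wg}(\pi_+,\pi_-)\,(w_-,e_{\pi_-})\,(w_+,e_{\pi_+}).\]
Next I would evaluate the two inner products directly from the definition of $e_\pi$: expanding $e_{\pi_+}$ as a sum over one index and one sign per block, $(w_+,e_{\pi_+})$ vanishes unless $\iota_k=\iota_l$ and $\eta_k=-\eta_l$ for every block $\{k,l\}$ of $\pi_+$, and then equals a product of the $\eta$'s taken over the distinguished element of each block (the element receiving the $+\eta$ in the even permutation defining $e_{\pi_+}$); the inner product $(w_-,e_{\pi_-})$ behaves identically on the negative indices.

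I would then reindex the double sum by $\alpha:=\pi_-\delta\pi_+$, which by Lemma~\ref{lemma: pair of pairings} runs bijectively over $\mathrm{PM}(n)\cap S_{\mathrm{alt}}(\pm[n])$; extending the sum to all of $\mathrm{PM}(n)$ with $f$ set to $0$ off $S_{\mathrm{alt}}$ then produces the case distinction in the statement. Since $\delta\alpha$ restricts to $\pi_+$ on $[n]$ and to $\delta\pi_-\delta$ on $-[n]$, the matching conditions from the inner products become exactly $\iota=\iota\circ\delta\alpha$ and $\mathrm{sgn}\cdot\eta=\mathrm{sgn}\circ\alpha\cdot\eta\circ\delta\alpha$. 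By Lemma~\ref{lemma: pairings} the blocks of $\pi_+\vee\pi_-$ are the cycles of $\mathrm{FD}(\alpha)$ with lengths halved, so $\Lambda(\pi_+\vee\pi_-)=\Lambda(\mathrm{FD}(\alpha))$ and $\#(\pi_+\vee\pi_-)=\#(\alpha)/2$. Definition~\ref{definition: Weingarten function} then gives $\mathrm{Wg}(\pi_+,\pi_-)=(-2N)^{\#(\alpha)/2-n}\,\mathrm{wg}(\Lambda(\mathrm{FD}(\alpha)))$, and since $n$ is even this equals $(-1)^{\#(\alpha)/2}(2N)^{\#(\alpha)/2-n}f(\alpha)$, producing the asserted power of $2N$ and the value of $f$ up to the extra sign $(-1)^{\#(\alpha)/2}$.

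The hard part will be the sign bookkeeping: I must show that $(-1)^{\#(\alpha)/2}(w_+,e_{\pi_+})(w_-,e_{\pi_-})$ collapses to the stated $\prod_{k\in[n],\,-k\in\mathrm{FD}(\alpha)}\eta_k\eta_{-k}$. The two inner products each contribute a product of $\eta$'s indexed by the distinguished element of each block, and the choice of distinguished element is forced by requiring the defining permutations of $e_{\pi_+}$ and $e_{\pi_-}$ to be even; reconciling these conventions against the canonical representative choice encoded in $\mathrm{FD}(\alpha)$ is precisely the content of Lemma~\ref{lemma: parity} and Corollary~\ref{corollary: parity}. Concretely, I would use Corollary~\ref{corollary: parity} on the pairings-with-distinguished-elements to convert the permutation signs into the parity count $(-1)^{\#(\pi_+\vee\pi_-)+m}$, and then verify block by block—splitting according to the sign of the $\mathrm{FD}(\alpha)$ representative of each block—that the surviving $\eta$ factors are exactly those indexed by $k$ with $-k\in\mathrm{FD}(\alpha)$. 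This local, case-based verification combined with the global parity supplied by the corollary is where essentially all the work lies; once it is in place, the powers of $2N$ and the identification of the Weingarten value with $\mathrm{wg}(\Lambda(\mathrm{FD}(\alpha)))$ are immediate.
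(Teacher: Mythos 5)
Your reduction follows the paper's own proof step for step: the expectation is realized as the matrix entry \(\left(w_{+},Pw_{-}\right)\) of the orthogonal projection \(P=\int_{\mathrm{Sp}\left(N\right)}g^{\otimes n}dg\); Lemma~\ref{lemma: Euclidean exercise} applied to the spanning set \(\left\{e_{\pi}\right\}\) of Theorem~\ref{theorem: invariant basis} gives the double sum over \({\cal P}_{2}\left(n\right)^{2}\) weighted by \(\mathrm{Wg}\); the inner products force the index constraints; Lemma~\ref{lemma: pair of pairings} reindexes by \(\alpha=\pi_{-}\delta\pi_{+}\); Lemma~\ref{lemma: pairings} gives \(\#\left(\pi_{+}\vee\pi_{-}\right)=\#\left(\alpha\right)/2\) and \(\Lambda\left(\pi_{+}\vee\pi_{-}\right)=\Lambda\left(\mathrm{FD}\left(\alpha\right)\right)\); and the residual sign is to be handled by the parity machinery (the paper invokes Lemma~\ref{lemma: parity} directly where you invoke Corollary~\ref{corollary: parity}, which is derived from it --- interchangeable here).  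So there is no difference of approach, and everything up to the sign bookkeeping is right.

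The concrete problem is the sign identity you defer to the end: it is false as stated, so the block-by-block verification you plan would not close.  You require \(\left(w_{+},e_{\pi_{+}}\right)\left(w_{-},e_{\pi_{-}}\right)=\left(-1\right)^{\#\left(\alpha\right)/2}\prod_{k\in\left[n\right],-k\in\mathrm{FD}\left(\alpha\right)}\eta_{k}\eta_{-k}\), whereas what the parity argument actually yields (and what the paper proves) is \(\left(w_{+},e_{\pi_{+}}\right)\left(w_{-},e_{\pi_{-}}\right)=\left(-1\right)^{n/2-\#\left(\alpha\right)/2}\prod_{k\in\left[n\right],-k\in\mathrm{FD}\left(\alpha\right)}\eta_{k}\eta_{-k}\), differing from your target by \(\left(-1\right)^{n/2}\).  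Already at \(n=2\), with the single pairing \(\pi_{+}=\pi_{-}=\left\{\left\{1,2\right\}\right\}\), \(\alpha=\left(1,-2\right)\left(2,-1\right)\), \(\#\left(\alpha\right)/2=1\): the two inner products are \(\eta_{1}\) and \(\eta_{-1}\), and under the constraints \(\eta_{2}\eta_{-2}=\eta_{1}\eta_{-1}\), so the product of inner products equals \(+\prod\eta_{k}\eta_{-k}\), while your target demands \(-\prod\eta_{k}\eta_{-k}\).  (A direct check confirms the plus sign: for the \(\left(1,1\right)\) quaternion entry \(a+bi+cj+dk\) of \(U\), one has \(\mathbb{E}\left[U_{11;1,1}U_{11;-1,-1}\right]=\mathbb{E}\left[a^{2}+b^{2}\right]=\frac{1}{2N}\), with no minus sign.)  The source of the mismatch is not your combinatorics but the normalization you inherited: taking Definition~\ref{definition: Weingarten function} literally gives \(\mathrm{wg}\left(\left[1\right]\right)=\left(-2N\right)\cdot\frac{1}{2N}=-1\), and the lemma as stated holds only if \(\mathrm{wg}\) carries an extra factor \(\left(-1\right)^{n/2}\) (i.e.\ is the normalized orthogonal Weingarten function evaluated at \(-2N\), consistent with the paper's remark \(\mathrm{Wg}^{\mathrm{Sp}\left(N\right)}=\left(-1\right)^{n/2}\mathrm{Wg}^{O\left(-2N\right)}\) and with Remark~\ref{remark: Catalan}).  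The paper's own proof establishes the \(\left(-1\right)^{n/2-\#\left(\alpha\right)/2}\) identity and then absorbs the leftover \(\left(-1\right)^{n/2}\) silently into ``the result follows''; a faithful execution of your plan would instead surface that factor.  So you must either prove the corrected identity and adjust the normalization of \(\mathrm{wg}\) accordingly, or you will find the target you set for yourself unprovable.
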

\begin{proof}
Let
\[P:=\int_{g\in\mathrm{Sp}\left(N\right)}g^{\otimes n}dg:V^{\otimes n}\rightarrow V^{\otimes n}\textrm{.}\]
It is easy to show from the left and right invariance of the probability measure that \(P^{2}=P\) and \(P^{\ast}=P^{-1}=P\); i.e., it is an orthogonal projection.  Its image is spanned by the vectors in Theorem~\ref{theorem: invariant basis} (or is \(\left\{0\right\}\) if \(n\) is odd, in which case the lemma is trivially true): if \(\omega\in V^{\otimes n}\) is invariant under the action of \(g^{\otimes n}\) for \(g\in\mathrm{Sp}\left(N\right)\), then \(P\omega=\omega\); and conversely the action of \(g^{\otimes n}\) on image vector \(P\omega\) can be brought inside the integral, and by left-invariance the integral is again \(P\omega\).

The desired quantity is the entry indexed \(\iota_{1}\iota_{-1}\eta_{1}\eta_{-1},\ldots,\iota_{n}\iota_{-n}\eta_{n}\eta_{-n}\), i.e. \(\left(P\left(e_{\iota_{-1}\eta_{-1}}\otimes\cdots\otimes e_{\iota_{-n}\eta_{-n}}\right),e_{\iota_{1}\eta_{1}}\otimes\cdots\otimes e_{\iota_{n}\eta_{n}}\right)\).  Applying Lemma~\ref{lemma: Euclidean exercise}, we get that
\begin{multline*}
P\left(e_{\iota_{-1}\eta_{-1}}\otimes\cdots\otimes e_{\iota_{-n}\eta_{-n}}\right)\\=\sum_{\left(\pi_{+},\pi_{-}\right)\in{\cal P}_{2}\left(n\right)^{2}}\mathrm{Wg}\left(\pi_{+},\pi_{-}\right)\left(e_{\iota_{-1}\eta_{-1}}\otimes\cdots\otimes e_{\iota_{-n}\eta_{-n}},e_{\pi_{-}}\right)e_{\pi_{+}}
\end{multline*}
and the desired element is
\begin{multline}
\sum_{\left(\pi_{+},\pi_{-}\right)\in{\cal P}_{2}\left(n\right)^{2}}\mathrm{Wg}\left(\pi_{+},\pi_{-}\right)\left(e_{\iota_{-1}\eta_{-1}}\otimes\cdots\otimes e_{\iota_{-n}\eta_{-n}},e_{\pi_{-}}\right)\\\times\left(e_{\pi_{+}},e_{\iota_{1}\eta_{1}}\otimes\cdots\otimes e_{\iota_{n}\eta_{n}}\right)\label{formula: tensor entry}\textrm{.}
\end{multline}

By Lemma~\ref{lemma: pair of pairings}, we can take this sum over \(\alpha\in\mathrm{PM}\left(n\right)\), where the summand vanishes unless \(\alpha\) is alternating.  If \(\alpha\) is alternating, then the term \(e_{\iota_{\pm 1}\eta_{\pm 1}}\otimes\cdots\otimes e_{\iota_{\pm n}\eta_{\pm n}}\) appears in \(e_{\pi_{\pm}}\) only when \(\iota_{\pm k}=\iota_{\pm\pi_{\pm}\left(k\right)}\) and \(\eta_{\pm k}=-\eta_{\pm\pi_{\pm}\left(k\right)}\) for all \(k\in\left[n\right]\), i.e. \(\iota=\iota\circ\delta\pi_{-}\delta\pi_{+}\) and \(\eta=-\eta\circ\delta\pi_{-}\delta\pi_{+}\); otherwise the inner product vanishes, giving the conditions on \(\alpha\).

By Lemma~\ref{lemma: parity}, it is possible to find permutations \(\sigma_{+}\) and \(\sigma_{-}\) such that \(\sigma_{\pm}\left(\left\{\left\{1,2\right\},\cdots\left\{n-1,n\right\}\right\}\right)=\pi_{\pm}\) mapping the odd integers to the \(-k\in\mathrm{FD}\left(\alpha\right)\) with \(k>0\).  The sign \(\prod_{k>0:-k\in\mathrm{FD}\left(\alpha\right)}\eta_{k}\eta_{-k}\) then differs from the sign of the product of the two inner products in (\ref{formula: tensor entry}) by \(\mathrm{sgn}\left(\sigma_{1}\sigma_{2}\right)=\left(-1\right)^{n/2-\#\left(\alpha\right)/2}\) (since the \(e_{\pm\pi}\) are produced by even permutations).  The result follows.
\end{proof}

\subsection{Symplectically Invariant Matrices}
\label{subsection: symplectically invariant}

\begin{definition}
Random matrices \(X_{1},\ldots,X_{n}:\Omega\rightarrow M_{N\times N}\left(\mathbb{H}\right)^{n}\) are {\em symplectically invariant} if, for any \(U\in\mathrm{Sp}\left(n\right)\), the joint probability distribution of \(U^{\ast}X_{1}U,\cdots,U^{\ast}X_{N}U\) is the same as that of \(X_{1},\ldots,X_{n}\).
\end{definition}

\begin{definition}[Capitaine, Casalis]
Let \(X_{1},\ldots,X_{n}:\Omega\rightarrow M_{N\times N}\left(\mathbb{H}\right)\) be random quaternionic matrices.  The matrix cumulant corresponding to \(\alpha\in\mathrm{PM}\left(n\right)\) is
\[\sum_{\pi\in\mathrm{PM}\left(n\right)}\left(-1\right)^{\chi\left(\alpha,\pi\right)}2^{\#\left(\pi\right)/2}\mathrm{Wg}\left(\mathrm{FD}\left(\alpha\pi^{-1}\right)\right)\mathbb{E}\left[\mathrm{Re}_{\mathrm{FD}\left(\pi\right)}\mathrm{Tr}_{\mathrm{FD}\left(\pi\right)}\left(X_{1},\ldots,X_{n}\right)\right]\textrm{.}\]
See in particular \cite{MR2483727}, also \cite{MR2240781, MR2337139}.

We define the {\em normalized matrix cumulant} to be \(\left(2N\right)^{n-\#\left(\alpha\right)/2}\) times the matrix cumulant: 
\[\sum_{\pi\in\mathrm{PM}\left(n\right)}\left(-2N\right)^{\chi\left(\alpha,\pi\right)-\#\left(\alpha\right)}\mathrm{wg}\left(\mathrm{FD}\left(\alpha\pi^{-1}\right)\right)\mathbb{E}\left[\mathrm{Re}_{\mathrm{FD}\left(\pi\right)}\mathrm{tr}_{\mathrm{FD}\left(\pi\right)}\left(X_{1},\ldots,X_{n}\right)\right]\textrm{.}\]
\end{definition}

\begin{remark}
The cumulants defined here are equivalent to those defined in \cite{MR2483727}; however, we index over \(\mathrm{PM}\left(n\right)\) rather than \({\cal P}_{2}\left(\pm\left[n\right]\right)\) (see Lemma~\ref{lemma: pairing bijection}).  As in \cite{MR3217665}, we use a slightly different convolution than \cite{MR2483727}, but the cumulants themselves are equal.
\end{remark}

\begin{proposition}
Let random matrices \(X_{1},\ldots,X_{n}\) be symplectically invariant.  Then
\begin{multline*}
\mathbb{E}\left(X^{\left(1\right)}_{\iota_{1},\iota_{-1};\eta_{1},\eta_{-1}}\cdots X^{\left(n\right)}_{\iota_{n},\iota_{-n};\eta_{n}\eta_{-n}}\right)
\\=\sum_{\substack{\alpha\in\mathrm{PM}\left(n\right)\\\iota=\iota\circ\delta\alpha\\\mathrm{sgn}\cdot\eta=\mathrm{sgn}\circ\alpha\cdot\eta\circ\delta\alpha}}\left(2N\right)^{\#\left(\alpha\right)/2-n}\left[\prod_{\substack{k\in\left[n\right]\\-k\in\mathrm{FD}\left(\alpha\right)}}\eta_{k}\eta_{-k}\right]f\left(\alpha\right)
\end{multline*}
where \(f:\mathrm{PM}\left(n\right)\rightarrow\mathbb{C}\) is the normalized matrix cumulant.
\label{proposition: symplectically invariant}
\end{proposition}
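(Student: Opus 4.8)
The plan is to realize the symplectically invariant expectation as a Haar average over the symplectic group and then apply the Collins--\'{S}niady integration lemma already established above. Since the joint law of $X_1,\ldots,X_n$ agrees with that of $U^*X_1U,\ldots,U^*X_nU$ for every fixed $U\in\mathrm{Sp}\left(N\right)$, I would introduce a Haar-distributed symplectic matrix $U$ independent of the $X_k$ and write
\[\mathbb{E}\left(\prod_{k=1}^n X^{\left(k\right)}_{\iota_k,\iota_{-k};\eta_k,\eta_{-k}}\right)=\mathbb{E}_{X,U}\left(\prod_{k=1}^n\left(U^*X_kU\right)_{\iota_k,\iota_{-k};\eta_k,\eta_{-k}}\right)\textrm{,}\]
the inner $U$-average leaving the left-hand side unchanged by invariance.

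First I would expand each conjugated entry $\left(U^*X_kU\right)_{\iota_k,\iota_{-k};\eta_k,\eta_{-k}}$ in the $2N$-dimensional complex representation, introducing internal matrix and quaternion indices on the $X_k$ and producing, over the whole product, $n$ factors of $U$ and $n$ factors of $U^*$. Using (\ref{formula: annoying sign}) I would rewrite the $U^*$-entries as $U$-entries, at the cost of the familiar sign and index reversals, so that the $U$-integration is a Haar expectation of a product of $2n$ entries of $U$, to which the Collins--\'{S}niady lemma applies directly. That lemma produces a sum over an element of $\mathrm{PM}$ recording the Weingarten pairing of these $2n$ factors, weighted by $\mathrm{wg}$ of the associated partition and subject to the index-matching constraints inherited from the pairing.

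Next I would separate this Weingarten pairing into two pieces: an $\alpha\in\mathrm{PM}\left(n\right)$ that governs how the \emph{external} indices $\iota,\eta$ are identified, yielding exactly the constraints $\iota=\iota\circ\delta\alpha$ and $\mathrm{sgn}\cdot\eta=\mathrm{sgn}\circ\alpha\cdot\eta\circ\delta\alpha$ in the statement, and a $\pi\in\mathrm{PM}\left(n\right)$ that records how the \emph{internal} indices of the $X_k$ are contracted, which assembles them into $\mathbb{E}\left[\mathrm{Re}_{\mathrm{FD}\left(\pi\right)}\mathrm{tr}_{\mathrm{FD}\left(\pi\right)}\left(X_1,\ldots,X_n\right)\right]$. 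For each fixed $\alpha$, collecting the contributions over all compatible $\pi$ gives a coefficient of the form $\sum_{\pi}\left(\text{Weingarten weight}\right)\mathbb{E}\left[\mathrm{Re}_{\mathrm{FD}\left(\pi\right)}\mathrm{tr}_{\mathrm{FD}\left(\pi\right)}\left(X_1,\ldots,X_n\right)\right]$, and the task is to check that this weight is precisely $\left(-2N\right)^{\chi\left(\alpha,\pi\right)-\#\left(\alpha\right)}\mathrm{wg}\left(\mathrm{FD}\left(\alpha\pi^{-1}\right)\right)$, i.e.\ that the coefficient is the normalized matrix cumulant defined above. This amounts to matching the partition $\mathrm{FD}\left(\alpha\pi^{-1}\right)$ appearing in the cumulant with the partition on which $\mathrm{wg}$ is evaluated by the Haar lemma, together with the exponent of $2N$ and the Euler-characteristic sign.

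The main obstacle will be the parity bookkeeping of the quaternion indices: as in the proof of Proposition~\ref{proposition: topological expansion}, I must verify that the product of $\eta$-sign factors coming from (\ref{formula: annoying sign}), from the off-diagonal signs implicit in the symplectic Weingarten inner products, and from the reindexing that converts each $U^*$ into $U$, collapses to exactly the stated factor $\prod_{k\in\left[n\right],\,-k\in\mathrm{FD}\left(\alpha\right)}\eta_k\eta_{-k}$ for every compatible $\pi$, so that this sign may be pulled out of the $\pi$-sum. Establishing this sign consistency, together with the $\left(2N\right)^{\#\left(\alpha\right)/2-n}$ power count, which follows from comparing the number of free internal indices against the Weingarten normalization, is the heart of the argument; once it is in place, the identification of the $\pi$-sum with the normalized matrix cumulant is immediate from the definition.
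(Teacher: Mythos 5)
Your proposal is correct and follows essentially the same route as the paper's own proof: conjugate by an independent Haar-distributed symplectic matrix, expand the \(2n\) entries of \(U\) via the Weingarten (Collins--\'{S}niady) formula, split the resulting pairing data into an external premap \(\alpha\) constraining \(\iota,\eta\) and an internal premap \(\pi\) contracting \(\kappa,\theta\) into \(\mathbb{E}\left[\mathrm{Re}_{\mathrm{FD}\left(\pi\right)}\mathrm{tr}_{\mathrm{FD}\left(\pi\right)}\left(X_{1},\ldots,X_{n}\right)\right]\), and identify the \(\pi\)-sum with the normalized matrix cumulant. The sign bookkeeping you flag as the main obstacle is exactly what the paper resolves via Corollary~\ref{corollary: parity}, in a calculation it itself describes as parallel to the proof of Proposition~\ref{proposition: topological expansion}.
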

\begin{proof}
Let \(U:\Omega\rightarrow M_{N\times N}\left(\mathbb{H}\right)\) be a random Haar-distributed symplectic matrix independent from the \(X_{k}\).  Since the distribution of \(X_{1},\ldots,X_{n}\) is the same as that of \(U^{\ast}X_{1}U,\ldots,U^{\ast}X_{n}U\), the desired expected value is equal to the expected value of
\begin{multline*}
\left(U^{\ast}X_{1}U\right)_{\iota_{1}\iota_{-1};\eta_{1}\eta_{-1}}\cdots\left(U^{\ast}X_{n}U\right)_{\iota_{n}\iota_{-n};\eta_{n}\eta_{-n}}
\\=\sum_{\substack{\kappa:\pm\left[n\right]\rightarrow\left[N\right]\\\theta:\pm\left[n\right]\rightarrow\left\{1,-1\right\}}}\prod_{k=1}^{n}\eta_{k}\theta_{k}U_{\kappa_{k},\iota_{k};-\theta_{k},-\eta_{k}}U_{\kappa_{-k},\iota_{-k};\theta_{-k},\eta_{-k}}X^{\left(k\right)}_{\kappa_{k}\kappa_{-k};\theta_{k}\theta_{-k}}\textrm{.}
\end{multline*}

Fix \(\kappa\) and \(\theta\).  The expected value of the product of the \(2n\) entries from \(U\) is a sum over \({\cal P}_{2}\left(\pm\left[n\right]\right)^{2}\), which is in bijection with \(\mathrm{PM}\left(n\right)^{2}\):
\begin{multline*}
\mathbb{E}\left[\prod_{k\in\pm\left[n\right]}U_{\kappa_{k},\iota_{k};-\mathrm{sgn}\left(k\right)\theta_{k},-\mathrm{sgn}\left(k\right)\eta_{k}}\right]
\\=\sum_{\substack{\alpha\in\mathrm{PM}\left(n\right)\\\iota=\iota\circ\delta\alpha\\\mathrm{sgn}\cdot\eta=\mathrm{sgn}\circ\alpha\cdot\eta\circ\delta\alpha}}\sum_{\substack{\pi\in\mathrm{PM}\left(n\right)\\\kappa=\kappa\circ\delta\pi\\\mathrm{sgn}\cdot\theta=\mathrm{sgn}\circ\pi\cdot\theta\circ\delta\theta}}\left[\prod_{k\in I}\theta_{k}\eta_{k}\right]\left(2N\right)^{\#\left(\alpha^{-1}\pi\right)/2-2n}\mathrm{wg}\left(\delta\alpha,\delta\pi\right)\textrm{.}
\end{multline*}
where \(I\subseteq\pm\left[n\right]\) is a set with exactly one element in each pair of \(\delta\alpha\) and each pair of \(\delta\pi\).  We note that such a set \(I\) is a choice of one cycle from each pair of cycles in \(\delta\alpha\cdot\delta\pi=\alpha^{-1}\pi\) described in Lemma~\ref{lemma: pairings}.  The other choice (which contains the other element of each pair, for both pairings \(\delta\alpha\) and \(\delta\pi\)) gives the same value, since (by the constraints on the indices \(\eta_{k}\) and \(\theta_{k}\)) the partnered index has the same value when the signs of the \(k\) are different and opposite value when the signs of the \(k\) are the same, and there are an even number of such pairs (since the loop is formed from an even number of pairs, and there must be an even number of pairs which pair a positive integer with a negative one).  Thus the choice of which \(I\) containing one element of each pair is arbitrary.

We would like the sign
\[\prod_{\substack{k\in\left[n\right]\\-k\in\mathrm{FD}\left(\alpha\right)}}\eta_{k}\eta_{-k}\prod_{\substack{k\in\left[n\right]\\-k\in\mathrm{FD}\left(\pi\right)}}\theta_{k}\theta_{-k}\]
to appear (to give the sign in the statement and the sign required to express the elements of \(X\) as a trace), which differs from the sign \(\prod_{k=1}^{n}\eta_{k}\theta_{k}\) by \(\prod_{k\in\mathrm{FD}\left(\alpha\right)}\eta_{k}\prod_{k\in\mathrm{FD}\left(\pi\right)}\theta_{k}\).  This sign differs from \(\prod_{k\in I}\eta_{k}\theta_{k}\) by the number of pairs in \(\delta\alpha\) or \(\delta\pi\) whose elements have the same sign, and whose distinguished element in \(I\) is different than the distinguished element in \(\mathrm{FD}\left(\alpha\right)\) or \(\mathrm{FD}\left(\pi\right)\).  The calculation is similar to the one in the proof of Proposition~\ref{proposition: topological expansion}.  The total number of pairs whose distinguished element is different is the product of the signs of permutations mapping \(\delta\alpha\mapsto\delta\alpha\) and \(\delta\pi\mapsto\delta\pi\) taking \(I\) to \(\mathrm{FD}\left(\alpha\right)\) and \(\mathrm{FD}\left(\pi\right)\) respectively.  This is the same as the product of the signs of the permutations mapping \(\delta\alpha\mapsto\delta\pi\) taking \(I\) to \(I\) and \(\mathrm{FD}\left(\alpha\right)\) to \(\mathrm{FD}\left(\pi\right)\) respectively, which we calculate (by Corollary~\ref{corollary: parity}) to be \(\left(-1\right)^{\chi\left(\alpha,\pi\right)+\left|\left[n\right]\cap\mathrm{FD}\left(\alpha\right)\right|+\left|\left[n\right]\cap\mathrm{FD}\left(\pi\right)\right|}\) (taking the latter permutation via \(\delta\) with positive integers distinguished).  The parity of the number of pairs in \(\delta\alpha\) (resp.\ \(\delta\pi\)) which contain both a positive and a negative integer and whose distinguished elements are different is the parity of \(\left|\left[n\right]\cap\mathrm{FD}\left(\alpha\right)\right|-\left|\left[n\right]\cap I\right|\) (resp.\ \(\left|\left[n\right]\cap\mathrm{FD}\left(\pi\right)\right|-\left|\left[n\right]\cap I\right|\), giving a final sign of \(\left(-1\right)^{\chi\left(\alpha,\pi\right)}\).

Moving the sum over \(\kappa\) and \(\theta\) inside the other sums, we get a summand for \(\alpha\in\mathrm{PM}\left(n\right)\) with \(\iota=\iota\circ\delta\alpha\) and \(\mathrm{sgn}\cdot\eta=\mathrm{sgn}\circ\alpha\cdot\eta\circ\delta\alpha\):
\begin{multline*}
\left[\prod_{\substack{k\in\left[n\right]\\-k\in\mathrm{FD}\left(\alpha\right)}}\eta_{k}\eta_{-k}\right]\sum_{\pi\in\mathrm{PM}\left(n\right)}\left(-1\right)^{\chi\left(\alpha,\pi\right)}\left(2N\right)^{\#\left(\alpha^{-1}\pi\right)/2+\#\left(\pi\right)/2-2n}\mathrm{wg}\left(\delta\alpha,\delta\pi\right)
\\\times\mathbb{E}\left[\mathrm{Re}_{\pi}\mathrm{tr}_{\pi}\left(X_{1},\ldots,X_{n}\right)\right]
\end{multline*}
from which the result follows.
\end{proof}

\begin{remark}
By Remark~\ref{remark: Catalan}, if \(\lim_{N\rightarrow\infty}\mathbb{E}\left[\mathrm{Re}_{\pi}\mathrm{tr}_{\pi}\left(X_{1},\ldots,X_{n}\right)\right]\) exists (as it does for all of the ensembles discussed in this paper), the highest order terms will be those for which the Euler characteristic \(\chi\left(\alpha,\pi\right)\) is large, that is those where \(\pi\) does not connect cycles of \(\alpha\) and is planar with respect to \(\alpha\) (Definition~\ref{definition: planar}).
\end{remark}

\subsection{Matrices Symplectically in General Position}

\begin{definition}
We say that random matrices \(X_{1},\ldots,X_{n}\) and \(Y_{1},\ldots,Y_{n}\) are {\em symplectically in general position} if the joint probability distribution is the same when one of the sets is conjugated by an arbitrary symplectic matrix.

We will be considering matrices which are symplectically invariant and symplectically in general position, so the distribution is the same when each ensemble is conjugated by a different arbitrary symplectic matrix.
\end{definition}

\begin{proposition}
Let \(w:\left[n\right]\rightarrow\left[C\right]\) be a word in colours \(\left[C\right]\), and let the \(X_{k}\), \(w\left(k\right)=c\) be symplectically invariant and symplectically in general position for each \(c\in\left[C\right]\).  Then
\begin{multline*}
\mathbb{E}\left(X^{\left(1\right)}_{\iota_{1}\iota_{-1};\eta_{1}\eta_{-1}}\cdots X^{\left(n\right)}_{\iota_{n}\iota_{-n};\eta_{n}\eta_{-n}}\right)
\\=\sum_{\substack{\alpha\in\mathrm{PM}\left(n\right)\\\iota=\iota\circ\delta\alpha\\\mathrm{sgn}\cdot\eta=\mathrm{sgn}\circ\alpha\cdot\eta\circ\delta\alpha}}\left(2N\right)^{\#\left(\alpha\right)/2-n}\left[\prod_{\substack{k\in\left[n\right]\\-k\in\mathrm{FD}\left(\alpha\right)}}\eta_{k}\eta_{-k}\right]f\left(\alpha\right)
\end{multline*}
where \(f\left(\alpha\right)\rightarrow\mathbb{C}\) vanishes unless \(\alpha=\alpha_{1},\ldots,\alpha_{C}\), \(\alpha_{c}\in\mathrm{PM}\left(w^{-1}\left(c\right)\right)\), in which case
\begin{multline*}
f\left(\alpha\right)=\sum_{\substack{\pi=\pi_{1}\cdots\pi_{C}\\\pi_{c}\in\mathrm{PM}\left(w^{-1}\left(c\right)\right)}}\left(-2N\right)^{\chi\left(\alpha,\pi\right)-\#\left(\alpha\right)}\mathrm{wg}\left(\delta\alpha_{1},\delta\pi_{1}\right)\cdots\mathrm{wg}\left(\delta\alpha_{C},\delta\pi_{C}\right)\\\times\mathbb{E}\left(\mathrm{Re}_{\pi}\mathrm{tr}_{\pi}\left(X_{1},\ldots,X_{n}\right)\right)\textrm{.}
\end{multline*}
\end{proposition}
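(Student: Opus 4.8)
The plan is to imitate the proof of Proposition~\ref{proposition: symplectically invariant} almost verbatim, replacing the single auxiliary Haar matrix by a family of $C$ independent ones. Concretely, I would introduce independent Haar-distributed symplectic matrices $U_1,\ldots,U_C$, each independent of all the $X_k$, and use the hypothesis that the ensembles are symplectically invariant and symplectically in general position to replace each $X_k$ by $U_{w(k)}^\ast X_k U_{w(k)}$ without changing the joint distribution. Expanding every conjugated entry exactly as in the one-colour case introduces summation indices $\kappa:\pm[n]\to[N]$ and $\theta:\pm[n]\to\{1,-1\}$ together with a factor $\prod_{k=1}^n\eta_k\theta_k U^{(w(k))}_{\kappa_k,\iota_k;-\theta_k,-\eta_k}U^{(w(k))}_{\kappa_{-k},\iota_{-k};\theta_{-k},\eta_{-k}}X^{(k)}_{\kappa_k\kappa_{-k};\theta_k\theta_{-k}}$, where the two $U$-entries associated with index $k$ both come from the colour $w(k)$.

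The key new ingredient is independence of the $U_c$: for fixed $\kappa$ and $\theta$, the expectation of the product of all $2n$ symplectic entries factors as a product over $c\in[C]$ of the expectation of the $2\left|w^{-1}(c)\right|$ entries drawn from $U_c$. To each of these factors I would apply the Collins--\'{S}niady Haar integration formula (the single-colour lemma proved above), which expresses it as a sum over pairs $(\alpha_c,\pi_c)\in\mathrm{PM}(w^{-1}(c))^2$ carrying a Weingarten weight $\mathrm{wg}(\delta\alpha_c,\delta\pi_c)$ and imposing the usual index constraints $\iota=\iota\circ\delta\alpha_c$ and $\mathrm{sgn}\cdot\eta=\mathrm{sgn}\circ\alpha_c\cdot\eta\circ\delta\alpha_c$ on the colour-$c$ indices (and likewise for $\kappa,\theta$ with $\pi_c$). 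Because the Haar pairings can only link entries coming from the same $U_c$, the global permutations $\alpha:=\alpha_1\cdots\alpha_C$ and $\pi:=\pi_1\cdots\pi_C$ are automatically supported on disjoint colour classes; this is exactly the statement that $f(\alpha)$ vanishes unless $\alpha$ factors as $\alpha_1\cdots\alpha_C$, and it produces the advertised product $\mathrm{wg}(\delta\alpha_1,\delta\pi_1)\cdots\mathrm{wg}(\delta\alpha_C,\delta\pi_C)$.

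It then remains to collect the signs and powers of $2N$ and to perform the sum over $\kappa,\theta$, which I would do exactly as in Proposition~\ref{proposition: symplectically invariant}. Here the crucial simplification is that every combinatorial statistic entering the single-colour computation---namely $\#(\alpha)$, $\#(\pi)$, $\#(\alpha^{-1}\pi)$, the cardinalities $\left|[n]\cap\mathrm{FD}(\cdot)\right|$, and hence the Euler characteristic $\chi(\alpha,\pi)$---is additive over the disjoint colour classes, so the per-colour contributions $(-1)^{\chi(\alpha_c,\pi_c)}$ and the associated powers multiply to the global factor $(-2N)^{\chi(\alpha,\pi)-\#(\alpha)}$, while the inner sum over $\kappa,\theta$ reassembles into the joint expectation $\mathbb{E}(\mathrm{Re}_\pi\mathrm{tr}_\pi(X_1,\ldots,X_n))$.

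The main obstacle is the sign bookkeeping, just as it was in the one-colour proof. Tracking which pairs of $\delta\alpha$ and $\delta\pi$ have their distinguished element (in $\mathrm{FD}$) switched relative to the chosen index set $I$, and confirming via Corollary~\ref{corollary: parity} that these parities assemble to $(-1)^{\chi(\alpha,\pi)}$, is delicate; I would need to check carefully that carrying out this count colour by colour is consistent with the global count, which rests on the additivity of $\chi$ over the disjoint supports of the $\alpha_c$ and $\pi_c$. Once that additivity is verified, the remaining power-of-$2N$ accounting is routine.
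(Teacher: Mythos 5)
Your proposal is correct and follows essentially the same route as the paper's proof: conjugate each \(X_{k}\) by \(U_{w\left(k\right)}\) for independent Haar symplectic matrices \(U_{1},\ldots,U_{C}\), factor the Haar expectation over colours by independence so that the sum runs over \(\prod_{c}\mathrm{PM}\left(w^{-1}\left(c\right)\right)^{2}\) (which is exactly why \(f\) is supported on factored \(\alpha=\alpha_{1}\cdots\alpha_{C}\)), and then repeat the sign and index bookkeeping of Proposition~\ref{proposition: symplectically invariant}, with the additivity of \(\chi\) and the other cycle-counting statistics over the disjoint colour classes reassembling the per-colour weights into \(\left(-2N\right)^{\chi\left(\alpha,\pi\right)-\#\left(\alpha\right)}\mathrm{wg}\left(\delta\alpha_{1},\delta\pi_{1}\right)\cdots\mathrm{wg}\left(\delta\alpha_{C},\delta\pi_{C}\right)\). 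The paper compresses these steps into a single displayed formula and defers the sign argument to the one-colour case, but the substance is identical to what you describe.
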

\begin{proof}
The proof is similar to that of Proposition~\ref{proposition: symplectically invariant}.  Let \(U_{1},\ldots,U_{C}:\Omega\rightarrow M_{N\times N}\left(\mathbb{H}\right)\) be independent Haar distributed symplectic matrices.  Because the matrices are symplectically in general position, the expected value is equal to the expected value of
\begin{multline*}
\left(U_{w\left(1\right)}^{\ast}X_{1}U_{w\left(1\right)}\right)_{\iota_{1}\iota_{-1};\eta_{1}\eta_{-1}}\cdots\left(U_{w\left(n\right)}^{\ast}X_{n}U_{w\left(n\right)}\right)_{\iota_{n}\iota_{-n};\eta_{n}\eta_{-n}}
\\=\sum_{\substack{\kappa:\pm\left[n\right]\rightarrow\left[N\right]\\\theta:\pm\left[n\right]\rightarrow\left\{1,-1\right\}}}\prod_{k=1}^{n}\eta_{k}\theta_{k}U^{\left(w\left(k\right)\right)}_{\kappa_{k},\iota_{k};-\theta_{k},-\eta_{k}}U^{\left(w\left(k\right)\right)}_{\kappa_{-k},\iota_{-k};\theta_{-k}\eta_{-k}}X^{\left(k\right)}_{\kappa_{k}\kappa_{-k};\theta_{k}\theta_{-k}}\textrm{.}
\end{multline*}
For a fixed \(\kappa\) and \(\theta\), we can express the expected value of the entries of the \(U_{k}\) as a sum over \(\prod_{c\in\left[C\right]}{\cal P}_{2}\left(\pm w^{-1}\left(c\right)\right)^{2}\), which is in bijection with \(\prod_{c\in\left[C\right]}\mathrm{PM}\left(w^{-1}\left(c\right)\right)^{2}\).  The expected value of the entries from the \(U_{c}\) is:
\begin{multline*}
\mathbb{E}\left(\prod_{k\in\pm\left[n\right]}U^{\left(\mathrm{sgn}\left(k\right)w\left(\left|k\right|\right)\right)}_{\kappa_{k}\iota{k};-\mathrm{sgn}\left(k\right)\theta_{k}-\mathrm{sgn}\left(k\right)\eta_{-k}}\right)
\\=\sum_{\substack{\alpha=\alpha_{1}\cdots\alpha_{C}\\\alpha_{c}\in\mathrm{PM}\left(w^{-1}\left(c\right)\right)\\\iota=\iota\circ\delta\alpha\\\mathrm{sgn}\cdot\eta=\mathrm{sgn}\circ\alpha\cdot\eta\circ\delta\alpha}}\sum_{\substack{\pi=\pi_{1}\cdots\pi_{C}\\\pi_{c}\in\mathrm{PM}\left(w^{-1}\left(c\right)\right)\\\kappa=\kappa\circ\delta\pi\\\mathrm{sgn}\cdot\theta=\mathrm{sgn}\circ\pi\cdot\theta\circ\delta\pi}}\left[\prod_{k\in I}\theta_{k}\eta_{k}\right]\left(2N\right)^{\#\left(K\left(\alpha,\pi\right)/2\right)-2n}
\\\times\mathrm{wg}\left(\delta\alpha_{1},\delta\pi_{1}\right)\cdots\mathrm{wg}\left(\delta\alpha_{C},\delta\pi_{C}\right)
\end{multline*}
where \(I\subseteq\pm\left[n\right]\) contains exactly one element from each pair in \(\delta\alpha\) and each pair in \(\delta\pi\).  The calculation of the sign is as in the proof of Proposition~\ref{proposition: symplectically invariant}, as is the constraints on the indices.  The result follows.
\end{proof}

\section{Acknowledgements}

I would like to thank Dr.\ W\l odzimierz Bryc for some helpful notes on the quaternionic Wick formula.  I would also like to that Dr.\ Roe Goodman and Dr.\ Nolan R.\ Wallach for explanation of the invariant spaces of the symplectic matrices.  Finally, I would like to thank D.\ P.\ Leaman for helping to hunt down references for me.

\bibliography{paper}
\bibliographystyle{plain}

\end{document}